\documentclass[11pt]{article}

\usepackage[T1]{fontenc}
\usepackage{lmodern}
\usepackage{amsmath,amssymb,amsthm,mathtools}
\usepackage[margin=1in]{geometry}
\usepackage{microtype}
\usepackage{enumitem}
\usepackage{xcolor}
\usepackage[colorlinks=true,linkcolor=blue!55!black,citecolor=blue!55!black,urlcolor=blue!55!black]{hyperref}
\usepackage{todonotes,hyperref}
\usepackage{listings}
\usetikzlibrary{arrows.meta}
\usepackage{authblk}
\usepackage{tabularray}
\newcommand{\email}[1]{\texttt{#1}}
\theoremstyle{definition}
\newtheorem{theorem}{Theorem}[section]
\newtheorem{lemma}[theorem]{Lemma}
\newtheorem{proposition}[theorem]{Proposition}
\newtheorem{corollary}[theorem]{Corollary}
\newtheorem{conjecture}[theorem]{Conjecture}
\newtheorem{definition}[theorem]{Definition}
\theoremstyle{remark} % 
\newtheorem{remark}[theorem]{Remark}

\usepackage{cleveref}
\usepackage{appendix}

\newcommand{\Cov}{\operatorname{Cov}}
\newcommand{\Var}{\operatorname{Var}}

\newcommand{\avdeg}{\bar d}
\newcommand{\E}{\mathbb{E}}
\newcommand{\Pp}{\mathbb{P}}
\newcommand{\R}{\mathbb{R}}
\newcommand{\one}{\mathbf{1}}
\newcommand{\tr}{\operatorname{tr}}
\newcommand{\diag}{\operatorname{diag}}

\newcommand{\K}{{\kappa}}
\newcommand{\opnorm}[1]{\lVert #1\rVert_{\mathrm{op}}}
\newcommand{\nucnorm}[1]{\lVert #1\rVert_{*}}

\newcommand{\Gnp}{G(n,p)}

\DeclareMathOperator{\spec}{\operatorname{spec}}

\title{The Kemeny--Braess Paradox in Random Graphs}

\author{%
${}$\\
  Manuel Bodirsky\textsuperscript{\textdagger}\qquad
  Lorenzo Ciardo\textsuperscript{\ddag}\qquad
  Mihyun Kang\textsuperscript{\S}
  \\${}$\\
  \begin{tabular}{@{}c@{}}
    \small\textsuperscript{\textdagger}Chair of Algebra and Discrete Structures,
    TU Dresden\\
    \footnotesize\email{manuel.bodirsky@tu-dresden.de}\\${}$\\
    \small\textsuperscript{\ddag}Institute of Algorithms and Theory,
    Graz University of Technology\\
    \footnotesize\email{lorenzo.ciardo@tugraz.at}\\${}$\\
    \small\textsuperscript{\S}Institute of Discrete Mathematics,
    Graz University of Technology\\
    \footnotesize\email{kang@math.tugraz.at}
  \end{tabular}%
}

\date{\today}

\begin{document}

\maketitle

\begin{abstract}
In 2016, Kirkland and Zeng initiated the mathematical treatment of Braess paradox
for random walks on graphs using Kemeny's constant as the measure
of travel time. 
We study this phenomenon for the Erd\H{o}s--R\'enyi
random graph $\Gnp$.
An edge in a connected graph  is called {\em Kemeny--Braess} if its deletion preserves connectedness and
decreases Kemeny's constant, while a nonedge is {\em Kemeny--Braess} if its
insertion increases Kemeny's constant. Let $\avdeg=(n-1)p$ be the expected degree of $G\sim \Gnp$, and assume
that $\avdeg-\log n\to\infty$, so that $G$ is connected with high
probability.

We determine the sharp asymptotic thresholds for the existence of
both types of paradoxical behaviour. With
$C^*=1/(1-\log 2)$, the threshold for Kemeny--Braess edges is
$\avdeg=C^*\log n$, whereas the threshold for Kemeny--Braess nonedges is
$\avdeg=2C^*\log n$. Hence, as the expected degree $\avdeg$ increases, the connected
regime exhibits three phases: first Kemeny--Braess paradox occurs for both edges and nonedges, then only
for nonedges, and finally it disappears. In particular, in the dense regime, with high probability  
$G$ has no Kemeny--Braess edges or nonedges. This contrasts sharply with the spectral gap of the normalised
Laplacian, for which Braess' paradox persists at constant edge density.
%, 

Our proof combines a spectral-moment
expansion of Kemeny's constant with uniform edge-perturbation estimates. Near
the thresholds, the sign of the perturbation is governed by the endpoint
degrees, and binomial large deviations yield the constant $C^*$ and the
factor of two between the two thresholds.
\end{abstract}

\clearpage

\newpage

\tableofcontents

\section{Introduction}
A counterintuitive phenomenon in the theory of congestion games, first observed by Braess in 1968, is that enlarging a traffic network may degrade its performance: adding a new road to a network in which drivers route themselves selfishly can strictly increase the travel time experienced by every driver at equilibrium, as well as the time it takes for traffic to spread evenly through the network. While this phenomenon---now known as the \textit{Braess paradox}---was originally observed in a game-theoretic context, where it emerges from the misalignment between selfish behaviour and social optimality, it was later recognised to occur in settings entirely devoid of strategic agents. In particular, the paradox manifests itself as a purely graph-theoretic phenomenon in the limit behaviour of random walks: Inserting an edge into a graph may slow down the random walk on it, in the sense that natural quantities measuring the expected first-passage time and the time to mixing may strictly increase. In this form, the paradox is intrinsic to the combinatorial structure of the graph, and its occurrence becomes a question about which graphs, and which edges, exhibit it.

A fully graph-theoretic formalisation of the phenomenon requires a quantitative notion of the quality of the traffic flow in a network as a parameter of the corresponding connected graph. The theory of Markov chains provides  a classic invariant measuring the expected travel time in a network, known as \textit{Kemeny's constant}~\cite{KemenySnell}.
If an irreducible Markov chain on state space $\Omega$ has stationary distribution $\pi$,
write
$H(x,y)$ for the \emph{expected hitting time} of $y$ from $x$, that is,
the expected number of steps after which the chain started at $x$ first
visits $y$, with the convention $H(x,x)=0$.  Then the quantity
\[
  \K := \sum_{y\in \Omega}\pi(y)H(x,y)
\]
does not depend on the starting state $x$; 
this observation goes back to
Kemeny and Snell \cite{KemenySnell} and is commonly called the \emph{random
target lemma} \cite[Lemma~10.1]{LevinPeresWilmer}.
The common value---Kemeny's constant---is
thus an intrinsic parameter of the chain: it is the expected time to reach a
target drawn from the stationary distribution, and it may be read as a single
scalar summary of how quickly the chain travels between typical states.
Note that if the chain is a simple random walk on a connected graph $G$, then $\K$ becomes a
graph invariant.

Beyond its role in Markov-chain theory---see the survey by
Hunter \cite{Hunter}---Kemeny's constant has appeared in applications ranging
from web navigation \cite{LeveneLoizou}, to vehicular traffic on road networks
\cite{CrisostomiKirklandShorten}, where a small value of $\K$ indicates a
well-connected system in which typical destinations are reached quickly, to computational chemistry, where it has proved to
help characterise molecular conformation dynamics~\cite{martino2024kemeny}, to epidemiology, where it
has been used to measure the spread rate of infectious diseases, predicting how quickly
an outbreak reaches epidemic levels~\cite{yilmaz2020kemeny,dudkina2024comparison}.

Kemeny's constant is intimately related to
the \textit{spectral gap} of the
normalised Laplacian, another classic parameter in the study of random
walks, closely linked to relaxation and mixing properties of the Markov chain~\cite{montenegro2006mathematical,Chung97}. 
For %
a graph $G$, this is defined as the second-smallest normalised Laplacian eigenvalue of $G$. %
Given
a {connected} graph with transition-matrix eigenvalues
$1=\lambda_1 > \lambda_2\geq\ldots\geq\lambda_n$,
Kemeny's constant admits the spectral representation 
\begin{align}
\label{eqn_spectral_formula_kemeny_constant}
\K =\K(G) =\sum_{i=2}^{n}\frac{1}{1-\lambda_i}
\end{align}
see, e.g.,~\cite[\S3]{lovasz1993random} and~\cite[\S3]{LeveneLoizou}. 
Since the random-walk transition matrix and the normalised adjacency matrix have the same spectrum, the normalised Laplacian eigenvalues are $1-\lambda_i$. Consequently, the reciprocal of the spectral gap $1-\lambda_2$ is the largest summand in the spectral formula~\eqref{eqn_spectral_formula_kemeny_constant} for Kemeny’s constant. 

\paragraph*{Braess edges and nonedges.}
The study of Braess paradox for
Kemeny's constant was pioneered by Kirkland and Zeng
in~\cite{KirklandZeng}.
Let $G$ be an arbitrary connected graph.
We say that a nonedge $uv$ of $G$ is a
\emph{Kemeny--Braess nonedge} (in short, \emph{$\kappa$-Braess nonedge}) 
if
\[
  \K(G+uv)>\K(G).
\]
 Moreover, an edge $e$ of $G$ is a
 \emph{Kemeny--Braess edge}
(in short, \emph{$\kappa$-Braess edge}) %
if $G-e$ is connected and
\[
  \K(G-e)<\K(G).
\]
Following~\cite{KirklandZeng}, several papers identified structural criteria
for the occurrence of the paradox in various
classes of graphs; see the discussion in the ``Related work'' paragraph.

Analogously, we call an edge in a connected graph 
\textit{$\operatorname{sg}$-Braess} if deleting it preserves connectivity and
increases the spectral gap, and call a nonedge \textit{$\operatorname{sg}$-Braess}
if adding it decreases the spectral gap.
Note that all four notions ($\K$-Braess edges/nonedges,
$\operatorname{sg}$-Braess edges/nonedges)
express that travelling in the graph is harder when the
extra road is available, and are thus all instances of Braess paradox for graphs.
It was conjectured by Fan Chung in 2014 that almost all dense Erd\H{o}s--R\'enyi random graphs have many  sg-Braess edges:

\begin{conjecture}[Chung, 2014; see {\cite[Conjecture~1]{eldan2017braess}}]
\label{conj_chung}
{For each fixed $p\in(0,1)$, there exists a constant $c=c(p)>0$
such that
at least $c$ fraction
of the edges of $G(n,p)$ are $sg$-Braess whp.\footnote{\emph{With high probability}, meaning with probability tending to one as $n$ approaches $\infty$.}}
\end{conjecture}
Eldan, R\'acz, and Schramm proved the corresponding result for sg-Braess nonedges in 2017.

 \begin{theorem}[\cite{eldan2017braess}]
 \label{thm_eldan}
     For each fixed $p\in(0,1)$, there exists a constant $c=c(p)>0$ such that at least $c$ fraction of the nonedges in $G(n,p)$ are $sg$-Braess whp.
 \end{theorem}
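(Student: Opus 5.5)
The strategy follows Eldan--R\'acz--Schramm: analyse the second eigenvector of the normalised adjacency matrix, and show that adding a nonedge $uv$ whose endpoints have eigenvector entries of the same sign and comparable size decreases the spectral gap. Write $A$ for the adjacency matrix, $D$ for the degree matrix, and $M=D^{-1/2}AD^{-1/2}$. Let $\mu=\lambda_2(M)$ with unit eigenvector $f$, so that the spectral gap is $1-\mu$. Adding $uv$ changes $A$ by the rank-two matrix $E_{uv}=e_ue_v^T+e_ve_u^T$ and increases $d_u$ and $d_v$ by one. By the Courant--Fischer (variational) characterisation, the new gap is at most
\[
1-\frac{\langle g,(A+E_{uv})g\rangle}{\langle g,(D+e_ue_u^T+e_ve_v^T)g\rangle},
\]
for any test vector $g$ orthogonal to $(D')^{1/2}\mathbf 1$ in the new inner product. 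We take $g=D^{-1/2}f$, corrected by a multiple of $\mathbf 1$ to restore orthogonality. Expanding to first order, the gap decreases whenever
\[
2g_ug_v>\mu\,(g_u^2+g_v^2)+\text{(error terms)}.
\]
For a dense $G(n,p)$ we have $\mu\asymp n^{-1/2}$, since the bulk of the spectrum of $M$ has width of order $1/\sqrt{np}$. Hence the condition holds whenever $g_u$ and $g_v$ have the same sign and are within a constant factor of each other: then $2g_ug_v\gtrsim g_u^2$, which beats $\mu g_u^2=O(n^{-1/2})g_u^2$.

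The concrete steps are as follows.

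\textbf{Step 1.} Establish the spectral facts, all of which hold whp. Degrees are concentrated: $d_x=np(1+O(\sqrt{\log n/n}))$. The matrix $M$ has top eigenvalue $1$ and $\lambda_2=\Theta(n^{-1/2})$. There is a gap $\lambda_2-\lambda_3\geq n^{-1/2-o(1)}$ (via known level-spacing/rigidity results for Wigner-type matrices). Finally, $f$ is delocalised: $\|f\|_\infty\le n^{-1/2+o(1)}$.

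\textbf{Step 2.} Prove a \emph{non-localisation} lower bound: a positive fraction of coordinates satisfy $|f_x|\ge c/\sqrt n$. This follows from $\|f\|_2=1$ together with the delocalisation bound, since the mass cannot concentrate on $o(n)$ coordinates. By pigeonhole over signs and dyadic scales, there is then a set $S$ of size $\ge c'n$ on which $f$ has a common sign and magnitude in $[c/\sqrt n,C/\sqrt n]$.

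\textbf{Step 3.} Count nonedges inside $S$. There are about $(1-p)\binom{|S|}{2}$ of them, because $S$ is determined by $G$ but $G$ restricted to a linear-sized set has edge density $p+o(1)$ uniformly over all such sets (Chernoff plus a union bound over $2^n$ sets). This gives a constant fraction of all nonedges.

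\textbf{Step 4.} Control the errors. The orthogonality correction is $O(1/n)$ in $\ell_2$, the degree-normalisation changes are $O(1/n)$ relatively, and the second-order terms are $O(1/n^2)$. Against the main term $2g_ug_v\asymp 1/(n\cdot np)$, all of these must be shown to be of smaller order.

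The main obstacle is Step 4 combined with the input from Step 1. Rank-one and degree corrections shift the Rayleigh quotient by amounts comparable to $1/n^2$. Comparing to the gain requires careful bookkeeping, and making the test-vector argument rigorous requires a perturbative bound showing that the new $\lambda_2$ is governed by $f$, which is where the eigenvalue gap $\lambda_2-\lambda_3$ is needed. I would first try to avoid needing a gap by using only the variational upper bound on the new gap, which requires no eigenvalue separation. If that fails, I would fall back on local-law estimates for sparse-normalised random matrices.
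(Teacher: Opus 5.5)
\paragraph{The gap: Step~2.} This step does not follow from Step~1, and it is where the real difficulty of the theorem lies. From $\lVert f\rVert_2=1$ and $\lVert f\rVert_\infty\le K/\sqrt n$ you only get the following. The set $T=\{x: f_x^2\ge 1/(2n)\}$ carries mass at least $1/2$, so $|T|\ge n/(2K^2)$. With $K=n^{o(1)}$ this is $n^{1-o(1)}=o(n)$ coordinates. The mass \emph{can} sit on $o(n)$ coordinates, so Steps~2--3 only guarantee $n^{2-o(1)}$ Braess nonedges, a vanishing fraction.

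You cannot repair this by upgrading to $K=O(1)$. Heuristically the coordinates of $f$ behave like independent $N(0,1/n)$ variables, so $\lVert f\rVert_\infty$ is of order $\sqrt{\log n/n}$. Sup-norm delocalisation is an \emph{upper} bound on coordinates. What the argument needs is an anti-concentration (``no-gaps'') lower bound. One option is that whp every set of $(1-\epsilon)n$ coordinates carries $\ell^2$-mass at least $c(\epsilon)>0$. Another is that $\sum_x f_x^4\le K/n$ whp for a constant $K$; then $(\sum_{x\in T}f_x^2)^2\le |T|K/n$ forces $|T|\ge n/(4K)$. Neither follows from the sup-norm and rigidity inputs you list. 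For the normalised Laplacian of $G(n,p)$, whose entries are coupled through the degrees, establishing such a statement is the new eigenvector-delocalisation result of Eldan, R\'acz and Schramm. The paper does not reprove the theorem; it only cites it.

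\paragraph{The rest of the plan.} It is sound and simpler than you fear: no eigenvalue gap and no second-order bookkeeping are needed, because the test-vector computation is exact. Set $A'=A+e_ue_v^\top+e_ve_u^\top$, $D'=D+e_ue_u^\top+e_ve_v^\top$, $g=D^{-1/2}f$, $s=g_u+g_v$, $N=2|E|+2$ and $\tilde g=g-(s/N)\one$, so that $\one^\top D'\tilde g=0$. Then
\[
  \frac{\tilde g^{\top}A'\tilde g}{\tilde g^{\top}D'\tilde g}
  =\frac{\mu+2g_ug_v-s^2/N}{1+g_u^2+g_v^2-s^2/N}.
\]
Hence the second eigenvalue of the new normalised adjacency matrix exceeds $\mu$ as soon as $2g_ug_v-\mu(g_u^2+g_v^2)>(1-\mu)s^2/N$. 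This holds whenever $f_u,f_v$ have the same sign and magnitudes in $[c/\sqrt n,C/\sqrt n]$.

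This is fortunate, because the separation $\lambda_2-\lambda_3\ge n^{-1/2-o(1)}$ you assert in Step~1 is not true. Edge rigidity for Wigner-type matrices places $\lambda_2$ and $\lambda_3$ within $n^{-1/2}\cdot n^{-2/3+o(1)}$ of each other.
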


 \noindent In addition, the same work proved a weaker version of Chung's Conjecture~\ref{conj_chung}.

 \begin{theorem}[\protect{\cite{eldan2017braess}}]
\label{thm:eldan-edges}
For every fixed $p\in(0,1)$, there exists a constant $c=c(p)>0$
such that, for $G\sim G(n,p)$,
\[
  \limsup_{n\to\infty}
  \Pp\left(
    \bigl|\{e\in E(G): e\text{ is }sg\text{-Braess}\}\bigr|
    \ge c|E(G)|
  \right)
  \ge c.
\]
\end{theorem}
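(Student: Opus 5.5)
We plan to follow the strategy of Eldan, R\'acz and Schramm: a second-order perturbation expansion of the spectral gap under deletion of a single edge, combined with the eigenvector statistics of dense $G(n,p)$. Let $\mathcal L$ be the normalised Laplacian of $G\sim G(n,p)$ and $\lambda=\lambda_2(\mathcal L)$ the spectral gap, with unit eigenvector $f$. Deleting $e=uv$ perturbs $\mathcal L$ by a matrix $E_e$ supported essentially on rows and columns $u,v$, of operator norm $O(1/(np))$; the rank-two part comes from the removed entry, and the rest from renormalising the degrees of $u$ and $v$.

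\textbf{Expansion.} First-order perturbation theory gives
\[
\lambda(G-e)=\lambda+\langle f,E_e f\rangle+O\bigl(\|E_e\|^2/\mathrm{gap}\bigr).
\]
Here $\mathrm{gap}$ is the separation of $\lambda_2$ from $\lambda_3$. For dense $G(n,p)$ this separation is of order $n^{-1/2}\cdot n^{-1/6}$ at the edge of the semicircle (Tracy--Widom scale). We then compute the first-order term $\langle f,E_e f\rangle$ explicitly. Up to normalisation it is a combination of $f_u f_v/\sqrt{d_ud_v}$ and of $f_u^2/d_u$ and $f_v^2/d_v$, the latter weighted by $(1-\lambda)$. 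Delocalisation gives $f_u=O(n^{-1/2}\log n)$, so this term is of order $n^{-2}$. The aim is to show that the sign of this leading term, which is driven by the sign of $f_uf_v$ balanced against the degree-correction terms, is favourable (that is, increases the gap) for a positive fraction of edges.

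\textbf{Probabilistic input.} Using eigenvector universality for the normalised adjacency matrix, the vector $\sqrt n f$ has approximately Gaussian, approximately independent coordinates. Together with a decoupling argument, this shows that a constant fraction of the pairs $uv$ have $f_uf_v$ of order $1/n$ with the required sign, and hence a constant fraction of edges have first-order term of magnitude $c/n^2$ with the helpful sign.

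\textbf{Main obstacle.} The hard step is controlling the second-order remainder. It has order $\|E_e\|^2/\mathrm{gap}\approx n^{-2}\cdot n^{2/3}$, which is larger than the first-order term. To handle this, we would expand the resolvent more carefully: write the second-order term as $\sum_{j\ge3}\langle f_j,E_ef\rangle^2/(\lambda-\lambda_j)$ and use delocalisation to bound it by $O(n^{-2}\sum_j 1/(n|\lambda-\lambda_j|))$. This sum is typically $O(n^{-2}\log n)$, but only on events of non-vanishing probability. Whenever $\lambda_2$ is atypically close to $\lambda_3$ the sum blows up, and this is precisely why we obtain only $\limsup\Pp\ge c$ rather than a whp statement. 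So we would restrict to the event, of probability bounded below by level-repulsion and Tracy--Widom estimates, on which $\lambda_2-\lambda_3\ge n^{-2/3}\cdot K$ and the relevant entries are bounded. On that event, we would show the first-order term dominates for a $c$ fraction of edges and conclude.
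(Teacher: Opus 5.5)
There is a genuine gap, and it is the step you yourself flag as the main obstacle.

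\textbf{The remainder is not controlled.} The expansion $\lambda(G-e)=\lambda+\langle f,E_ef\rangle+O(\opnorm{E_e}^2/\mathrm{gap})$ presupposes $\opnorm{E_e}\ll\mathrm{gap}$. Here $\opnorm{E_e}\asymp 1/n$. The nontrivial spectrum of $S$ has width $\Theta(n^{-1/2})$, so the edge spacing $|\lambda_2-\lambda_3|$ is of order $n^{-1/2}\cdot n^{-2/3}=n^{-7/6}$, not $n^{-2/3}$. The crude remainder is therefore about $n^{-5/6}$.

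Your good event $\{|\lambda_2-\lambda_3|\ge Kn^{-2/3}\}$ has probability tending to zero: a window of width $n^{-2/3}$ at the edge typically contains about $n^{3/4}$ eigenvalues. So that event cannot supply the positive probability.

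The refined bound does not close either. You say the second-order sum is typically $O(n^{-2}\log n)$, which is not smaller than the first-order term $cn^{-2}$ it must be dominated by. With the correct spectral scaling, $\sum_j|\lambda-\lambda_j|^{-1}\asymp n^{3/2}$ near the edge, so your bound $n^{-2}\sum_j(n|\lambda-\lambda_j|)^{-1}$ is of order $n^{-3/2}$.

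Making a direct lower bound on $\lambda(G-e)$ work would need an exact low-rank (secular-equation) analysis with isotropic local-law control of resolvent entries at the spectral edge. That is essentially Conjecture~\ref{conj_chung} itself, which the paper treats as open. The assumed input of Gaussian, asymptotically independent eigenvector coordinates for the normalised Laplacian is also more than is available or needed.

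\textbf{The missing idea: never work in the lower-bound direction.} The paper only cites this result (\cite{eldan2017braess}, Corollary~1.5). There it is deduced from the nonedge statement, Theorem~\ref{thm_eldan}.

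For adding an edge, Courant--Fischer is one-sided in the favourable direction. Take the old Fiedler vector $g=D^{-1/2}f$, shifted by a constant so that $g\perp D'\one$; it is a valid test function for $G+uv$. This gives $\lambda(G+uv)-\lambda(G)\le\bigl[(1-\lambda)(g_u^2+g_v^2)-2g_ug_v\bigr]/\sum_i d_i'g_i^2$, up to a negligible shift term, with no second-order remainder. Delocalisation of $f$ (small $\|f\|_\infty$, and linearly many coordinates with $|f_u|\ge c/\sqrt n$) then makes this negative for $\Omega(n^2)$ nonedges, whp.

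A switching argument transfers this to edges. Let $G\sim G(n,p)$ and let $e$ be a uniform nonedge of $G$. On typical $H$, the law of $(G+e,e)$ has density $\frac{1-p}{p}\cdot\frac{|E(H)|}{\binom n2-|E(H)|+1}=1+o(1)$ with respect to that of $(G',e')$, where $G'\sim G(n,p)$ and $e'$ is a uniform edge of $G'$. Under this coupling, $e'$ is sg-Braess in $G'$ exactly when $e$ is an sg-Braess nonedge of $G=G'-e'$. Hence the expected fraction of sg-Braess edges of $G'$ is at least $c-o(1)$. Since this fraction is at most $1$, it is at least $c/2$ with probability at least $c/2-o(1)$.

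So the weakening from whp to positive probability comes from this averaging step, not from Tracy--Widom or level-repulsion events.
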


Hence, dense random graphs appear to exhibit \textit{many} instances of Braess paradox when the network performance is measured according to the spectral gap.
It is then natural to investigate whether the analogues of Conjecture~\ref{conj_chung} and~\Cref{thm_eldan} hold for Kemeny's constant.

\paragraph*{Contributions.}
Our two main results determine, throughout the connected Erd\H{o}s--R\'enyi regime, where
Kemeny--Braess edges and nonedges disappear. 
Define the constant\footnote{Throughout the paper, $\log$ denotes the natural logarithm.}
\[C^*=\frac{1}{1-\log 2}=3.25889\ldots\]

\begin{theorem}[Threshold for Kemeny--Braess edges]
\label{thm:edge-threshold}
Fix $\varepsilon>0$. For $p=p(n)\in[0,1]$, let $\avdeg := (n-1) p$ denote the expected degree of a vertex in $G(n,p)$, and suppose that  $\avdeg-\log n\to\infty$.
Then $G\sim G(n,p)$ is connected whp, and the following holds.
\begin{enumerate}%
  \item[(i)] If $\avdeg \le (C^*-\varepsilon)\log n$ for all sufficiently large $n$,
  then $G$ has a $\kappa$-Braess edge whp.
  \item[(ii)] If $\avdeg \ge (C^*+\varepsilon)\log n$ for all sufficiently large $n$,
  then $G$ does not have any $\K$-Braess edge whp.\footnote{In particular, whp, $G-e$ is connected and $\K(G-e)>\K(G)$
  for every edge $e$ of $G$.}
\end{enumerate}
\end{theorem}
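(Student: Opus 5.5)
The plan is to derive a first-order formula for $\K(G-e)-\K(G)$, uniform over edges $e=uv$, and to read off its sign from the endpoint degrees $d_u,d_v$. The starting point is the spectral formula~\eqref{eqn_spectral_formula_kemeny_constant}. Write $\K=\tr\big((I-P+\Pi)^{-1}\big)-1$ with $P=D^{-1}A$ and $\Pi=\one\pi^\top$. In the connected regime every nontrivial eigenvalue satisfies $|\lambda_i|=O(1/\sqrt{\avdeg})$ whp (standard normalised-adjacency bounds, together with $d_{\min}=\Theta(\log n)$ when $\avdeg\ge(1+\delta)\log n$; near $\avdeg\approx\log n$ this needs care, see below). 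So we may expand
\[
\K=(n-1)+\sum_{k\ge1}\bigl(\tr P^k-1\bigr).
\]
The $k=1$ term vanishes because there are no loops. The $k=2$ term is $\sum_{x}\sum_{y\sim x}\frac{1}{d_xd_y}-1$. Higher terms will be shown to change by a lower-order amount under a single edge deletion.

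Next, compute the effect of deleting $e=uv$ on the $k=2$ term. The summand $2/(d_ud_v)$ is lost. Every other term $1/(d_ud_y)$ with $y\sim u$ increases to $1/((d_u-1)d_y)$, and similarly at $v$. This gives
\[
\Delta_2 \approx -\frac{2}{d_ud_v}+\frac{1}{d_u}\sum_{y\sim u}\frac{2}{d_y}\cdot\frac{1}{d_u}+\frac{1}{d_v}\sum_{y\sim v}\frac{2}{d_y}\cdot\frac1{d_v}\approx -\frac{2}{d_ud_v}+\frac{2}{\avdeg}\Bigl(\frac1{d_u}+\frac1{d_v}\Bigr).
\]
The approximation uses that neighbours typically have degree $\approx\avdeg$. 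We then bound $\Delta_{\ge3}$ uniformly over edges by $O\bigl(\avdeg^{-1}\cdot\max(1/d_u,1/d_v)\cdot\avdeg^{-1/2}\bigr)$, via a perturbation argument on the resolvent: it is a rank-two change of $A$ with diagonal degree corrections. Thus the sign of $\K(G-e)-\K(G)$ is, up to $1+o(1)$ factors, the sign of $\frac{1}{d_u}+\frac1{d_v}-\frac{\avdeg}{d_ud_v}$, i.e.\ of $d_u+d_v-\avdeg$. So $e$ is $\K$-Braess essentially iff $d_u+d_v<\avdeg(1-o(1))$, i.e.\ iff the two endpoint degrees sum to less than the average degree.

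The threshold then comes from large deviations. For an edge $uv$, $d_u+d_v-2\sim\mathrm{Bin}(2(n-2),p)$ roughly, with mean $2\avdeg$. The probability that it is at most $\avdeg$ is $\exp\bigl(-2\avdeg\,I(1/2)+o(\avdeg)\bigr)$, where $I(a)=a\log a-a+1$ and $2I(1/2)=1-\log 2$. The expected number of such edges is $\frac{n\avdeg}{2}\,n^{-(1-\log2)\avdeg/\log n}$ up to lower-order factors, and this is $\to\infty$ iff $\avdeg<C^*\log n$. For (ii), a union bound over the $\le n^2$ pairs shows that whp no edge has $d_u+d_v\le(1+\eta)\avdeg$ for small $\eta$ chosen according to $\varepsilon$. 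Connectivity of $G-e$ follows from $\delta(G)\ge2$ and the absence of small cuts whp. For (i), a second-moment computation on the count of edges with $d_u+d_v\le(1-\eta)\avdeg$ works because pairs of such edges are nearly independent. We must also check that the neighbours of $u,v$ still have typical degrees, so that the $2/\avdeg$ approximation holds. Such an edge lies in a non-bridge position whp, since $G-e$ stays connected given $d_u,d_v\gg1$.

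The main obstacle is the uniform error control in the second step. We need the remainder to be $o\bigl(1/(\avdeg\min(d_u,d_v))\bigr)$ simultaneously for every edge, including atypical low-degree endpoints. We also need the neighbour-degree averages $\frac1{d_u}\sum_{y\sim u}1/d_y$ to concentrate at $1/\avdeg$ uniformly. This is delicate when $\avdeg$ is only $\Theta(\log n)$: the spectral bound degrades near low-degree vertices, and the $k\ge3$ terms involve walks returning through $u$ or $v$. We would first try a local-tree-like analysis. The $k$-step return probabilities at $u,v$ would be bounded by walk counting on the $O(1)$-neighbourhood, which whp is a tree plus at most one cycle. The global tail of the expansion would be absorbed using the resolvent identity and a Frobenius-norm bound on the perturbation.
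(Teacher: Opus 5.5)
\textbf{There is a genuine gap: the step showing that all moments of order $k\ge3$ change by $o(\avdeg^{-2})$ is not established, and the route you give for it fails at $k=3$.}

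Your architecture matches the paper's. You use the spectral-moment expansion of $\K$ and the exact quadratic change $2(d_u+d_v-\avdeg)/(\avdeg d_ud_v)$. The large-deviation cost is $2I(\tfrac12)=1-\log 2$ over $\Theta(n\avdeg)$ candidate edges, with a second moment for (i) and a first-moment bound for (ii). The problem is that you need the higher moments to be negligible before the sign can be read off the quadratic term.

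\textbf{The cubic moment.} A rank-two/resolvent perturbation bound cannot handle $k=3$. The perturbation $W=B_{G-e}-B_G$ has $(u,v)$-entry $\approx -1/\sqrt{d_ud_v}$, so $\nucnorm{W}\ge\opnorm{W}\gtrsim\avdeg^{-1}$. Generically $\rho=\max(\opnorm{B_G},\opnorm{B_{G-e}})$ is of order $\avdeg^{-1/2}$. Trace duality gives $|\Delta\tr B^k|\le k\nucnorm{W}\rho^{k-1}$. This is $O(\avdeg^{-5/2})$ only for $k\ge4$; at $k=3$ it is $O(\avdeg^{-2})$, the same order as the signal. A second-order resolvent expansion hits the same obstruction. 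So the cubic change must be computed exactly, as in \eqref{eq:delta3-exact}, via common neighbours of $u,v$ and triangles at $u$ and $v$.

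Your local-tree-like fix kills this term only when $\avdeg=n^{o(1)}$, but part (ii) covers every $\avdeg\ge(C^*+\varepsilon)\log n$.
\begin{itemize}
\item For $p=o(1)$, the paper instead uses the whp bounds $|N(u)\cap N(v)|\le\xi_n\avdeg$ and $t(v)\le\xi_n\avdeg^2$. These give $o(\avdeg^{-2})$ with no tree structure.
\item For $p$ bounded away from $0$, the cubic pieces are individually about $6p/\avdeg^2$ and $3p/\avdeg^2$, which is comparable to the quadratic term. Only their first-order cancellation, coming from $C_{uv}\approx p$ and $Q_u\approx p/2$, makes the sign determined.
\end{itemize}
Your proposal has no mechanism for this dense case. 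It also has no way of gluing the regimes; the paper uses a subsequence/compactness argument.

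\textbf{The global spectral input.} The tail needs $\rho=o(\avdeg^{-1/3})$, and in part (i) this includes $\avdeg=\log n+\omega(1)$, where $d_{\min}$ may be $O(1)$. Bounds relying on $d_{\min}=\Theta(\log n)$ or larger do not apply there. Local walk counting does not control the global tail either. The missing ingredient is the Hoffman--Kahle--Paquette theorem: for $\avdeg\ge(\tfrac12+\delta)\log n$, whp all nontrivial normalised-Laplacian eigenvalues of the connected graph lie within $O(\avdeg^{-1/2})$ of $1$, regardless of low-degree vertices.

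\textbf{Minor points.}
\begin{itemize}
\item Your ``main obstacle'' of uniform control including low-degree endpoints is unnecessary. In (ii), $\min_v d_v\ge c\avdeg$ whp, since $(C^*+\varepsilon)I(c)>1$ for small $c$.
\item In (i) you need only one well-chosen edge. The paper takes $d_u=d_v=\lfloor\alpha\avdeg\rfloor$, no triangles at $u$ or $v$, and neighbour degrees within $\avdeg^{2/3}$ of $\avdeg$.
\item What does need proof in (ii) is uniform concentration of the harmonic statistic over all vertices. The paper gets this by showing every vertex has only $O(\avdeg^{1/2})$ neighbours of atypical degree.
\item In your second-moment count, adjacent edges are not nearly independent. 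Their contribution is $(\E|E_k|)^2/(n\beta)$, which is harmless only because $n\beta\to\infty$.
\end{itemize}
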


\begin{theorem}[Threshold for Kemeny--Braess nonedges]
\label{thm:nonedge-threshold}
Fix $\varepsilon>0$. For $p=p(n)\in[0,1]$, let $\avdeg := (n-1) p$ denote the expected degree of a vertex in $G(n,p)$, and suppose that  $\avdeg-\log n\to\infty$.
Then $G\sim G(n,p)$ is connected whp, and the following holds.
\begin{enumerate}%
  \item[(i)] If $\avdeg \le (2C^*-\varepsilon)\log n$ for all sufficiently large $n$,
  then $G$ has a $\kappa$-Braess nonedge whp.
  \item[(ii)] If $\avdeg \ge (2C^*+\varepsilon)\log n$ for all sufficiently large $n$,
  then $G$ does not have any $\K$-Braess nonedge whp.%
\end{enumerate}
\end{theorem}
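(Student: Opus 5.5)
The plan is to reduce the sign of $\K(G+uv)-\K(G)$ to a condition on the two endpoint degrees, and then to decide, by binomial large deviations, whether some nonadjacent pair satisfies that condition.

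\textbf{Step 1: moment expansion.} Let $P$ be the transition matrix of $G$. Whp $G$ is connected and every nontrivial eigenvalue satisfies $|\lambda_i|=O(1/\sqrt{\avdeg})$ in the regime $\avdeg-\log n\to\infty$, $\avdeg=O(\log n)$. Denser graphs would be handled separately by a cruder form of the same argument. Expanding~\eqref{eqn_spectral_formula_kemeny_constant} as a geometric series gives
\[
\K(G)=\sum_{k\ge0}\bigl(\tr P^k-1\bigr)=(n-1)+\bigl(\tr P^2-1\bigr)+\sum_{k\ge3}\bigl(\tr P^k-1\bigr),
\]
using $\tr P=0$. The same expansion applies to $G+uv$ with transition matrix $P'$, uniformly over all nonedges $uv$, since adding one edge moves the spectrum by $O(1/\min(d_u,d_v))$. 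Hence
\[
\Delta\K=\Delta\tr P^2+\sum_{k\ge3}\Delta\tr P^k .
\]

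\textbf{Step 2: the leading term.} We have $\tr P^2=\sum_{x\sim y}2/(d_xd_y)$. Adding $uv$ contributes the new summand $2/((d_u+1)(d_v+1))$. It also lowers every summand $2/(d_ud_x)$ with $x\sim u$ by about $2/(d_u^2d_x)$, and similarly at $v$. Whp the neighbour sums satisfy $\sum_{x\sim u}1/d_x=(1+o(1))\,d_u/\avdeg$ uniformly in $u$, by concentration of neighbour degrees. We therefore expect
\[
\Delta\tr P^2=\frac{2\bigl(\avdeg-d_u-d_v\bigr)+o(\avdeg)}{d_ud_v\avdeg}.
\]
The hardest step is to show, uniformly over all $\Theta(n^2)$ nonedges, that $\sum_{k\ge3}\Delta\tr P^k$ and the fluctuation errors are $o\bigl(1/(d_ud_v)\bigr)$. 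Only then is the sign of $\Delta\K$ governed by $\avdeg-d_u-d_v$ up to a $(1\pm\delta)$ margin.

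For this I would first write $\Delta\tr P^k$ as a sum over closed walks of length $k$ that use $uv$ or pass through $u$ or $v$. I would then bound these walk counts by local tree-likeness and degree concentration in the $2$- and $3$-neighbourhoods of $u$ and $v$. This gives terms of order $O\bigl(1/(d_ud_v\avdeg^{1/2})\bigr)$. The infinite tail is controlled by the spectral bound from Step 1.

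\textbf{Step 3: large deviations.} By Step 2 it suffices to decide whether some nonadjacent pair has $d_u+d_v\le(1-\delta)\avdeg$. The relevant rate is that of a Poisson variable at $a=1/2$:
\[
\Pp\bigl(d_u\le \tfrac12\avdeg\bigr)=\exp\bigl(-\avdeg\,\tfrac{1-\log2}{2}+o(\avdeg)\bigr).
\]
By convexity of the rate function $a\log a-a+1$, the cheapest way to achieve $d_u+d_v\le\avdeg$ is the equal split $d_u=d_v=\avdeg/2$. The expected number of such pairs is therefore $n^2e^{-(1-\log2)\avdeg+o(\avdeg)}$.

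For part~(ii), when $\avdeg\ge(2C^*+\varepsilon)\log n$, a first-moment bound over pairs, with a union over the $O(\avdeg)$ possible degree splits, shows that no pair has $d_u+d_v\le(1+\delta)\avdeg$. Hence $\Delta\K<0$ for every nonedge.

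For part~(i), when $\avdeg\le(2C^*-\varepsilon)\log n$, the number of vertices with degree at most $(1-\delta)\avdeg/2$ is concentrated, by a second-moment argument, around a value tending to infinity. Two such vertices are nonadjacent whp, and their degrees are nearly independent, which gives a $\K$-Braess nonedge.
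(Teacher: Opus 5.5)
Your outline has the same architecture as the paper: the moment expansion, the quadratic term giving the criterion $d_u+d_v$ versus $\avdeg$, and the rate $I(1/2)=(1-\log 2)/2$ set against $n^2$ pairs. However, the step you flag as hardest does not go through as described.

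\textbf{The tail.} The eigenvalue bound $\rho=O(\avdeg^{-1/2})$ alone only yields $|\Delta\tr P^k|\le 2(n-1)\rho^k$. Knowing that each eigenvalue moves by $O(1/\min(d_u,d_v))$ still leaves the factor $n$. For the tail to be $o(\avdeg^{-2})$, it must then start at $k\approx 2\log n/\log\avdeg$, which is $\Theta(\log n/\log\log n)$ when $\avdeg=\Theta(\log n)$. Closed walks of that length reach distances comparable to the diameter of $G$. Walk counting in $2$- and $3$-neighbourhoods via tree-likeness therefore cannot cover the intermediate range.

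The missing idea is that the perturbation has bounded rank. Write $S_{G+uv}=\Gamma S_G\Gamma+Z$, with $\Gamma$ the diagonal rescaling at $u,v$ and $Z$ the new-edge term, and note that $\Pi_{G+uv}-\Pi_G$ has rank at most two. Then $W:=B_{G+uv}-B_G$ satisfies $\nucnorm{W}=O(\avdeg^{-1})$; recall $\Delta\tr P^k=\Delta\tr B^k$. Telescoping $(B')^r-B^r=\sum_{j=0}^{r-1}(B')^jWB^{r-1-j}$ and trace duality $|\tr(XY)|\le\opnorm{X}\nucnorm{Y}$ give $|\Delta\tr B^r|\le r\nucnorm{W}\rho^{r-1}$, with $\rho:=\max\{\opnorm{B},\opnorm{B'}\}=O(\avdeg^{-1/2})$. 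Hence $\sum_{r\ge4}|\Delta\tr B^r|=O(\avdeg^{-5/2})$ uniformly over nonedges.

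After this, no combinatorics beyond $r=3$ is needed. The case $r=2$ is your exact formula. Given minimum degree $\Omega(\avdeg)$, the cubic change is $O\bigl(|N(u)\cap N(v)|\avdeg^{-3}+(t_u+t_v)\avdeg^{-4}\bigr)$. This is $o(\avdeg^{-2})$ once codegrees are $o(\avdeg)$ and triangle counts are $o(\avdeg^2)$. No tree-likeness is involved, so this covers every $p=o(1)$.

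\textbf{The dense end of part~(ii).} Part~(ii) must hold for every $p$ with $\avdeg\ge(2C^*+\varepsilon)\log n$, including $p\ge p_0>0$, and there ``a cruder form of the same argument'' cannot work. The cubic change is
$6\bigl[C_{uv}/((d_u+1)(d_v+1))-Q_u/(d_u(d_u+1))-Q_v/(d_v(d_v+1))\bigr]$. Here $C_{uv}=\sum_{w\in N(u)\cap N(v)}1/d_w\approx p$, and $Q_u$, the number of edges $ww'$ inside $N(u)$ weighted by $1/(d_wd_{w'})$, is about $p/2$. The three summands are therefore about $6p/\avdeg^2$, $3p/\avdeg^2$ and $3p/\avdeg^2$, each comparable to the leading term $-2/\avdeg^2$. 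The nuclear-norm bound also gives at best $O(\avdeg^{-2})$ at $r=3$.

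The sign is decided only after the exact leading-order cancellation. This needs uniform concentration of all codegrees around $(n-2)p^2$ and of all triangle counts around $\binom{n-1}{2}p^3$ to relative precision $o(1)$. The tail additionally needs a spectral bound such as $\opnorm{B}=O(\sqrt{\log n/n})$. The paper proves this separately (Theorem~\ref{thm:dense-exclusion}). It then combines it with the case $p=o(1)$ by passing to subsequences along which $p\to q\in[0,1]$.

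\textbf{A smaller point in part~(i).}
\begin{itemize}
\item Take degrees exactly $\lfloor\alpha\avdeg\rfloor$ with $\alpha<1/2$ close to $1/2$. The condition ``at most $(1-\delta)\avdeg/2$'' admits bounded degrees, for which $d_u=\Theta(\avdeg)$ fails.
\item Your uniform neighbour-sum claim in Step~2 is false when $\avdeg-\log n$ grows slowly, because vertices of bounded degree then exist.
\item So you must show directly that the chosen pair has all neighbours of degree $(1+o(1))\avdeg$, no common neighbours and no triangles. One way is to show that most vertices of degree $\lfloor\alpha\avdeg\rfloor$ have these properties, and then pick two of them at distance at least $3$.
\end{itemize}
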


Together with the classical connectivity threshold at
  $\avdeg\sim\log n$~\cite{ErdosRenyi}, our results give
the phase diagram shown in~\Cref{fig:braess-phases}. %
In particular,
for every fixed $p\in(0,1]$, the upper
halves of the two theorems show that, whp, there is no Kemeny--Braess 
paradoxical edge or nonedge at all.  Thus, the Kemeny analogues of both {Conjecture}~\ref{conj_chung} and \Cref{thm_eldan} fail in the strongest possible sense.
Hence, despite the close spectral relationship between Kemeny's constant and the
spectral gap, the two parameters behave very differently on dense
random graphs.

\begin{figure}%
\centering
\definecolor{phasegrey}{HTML}{737980}
\definecolor{phaseorange}{HTML}{A95D31}
\definecolor{phaseblue}{HTML}{326F91}
\definecolor{phasegreen}{HTML}{3D7563}
\begin{tikzpicture}[
  x={\dimexpr(\linewidth-2pt)/160\relax}, y=1cm,
  phase/.style={align=center, font=\small\bfseries,
                text width=.24\linewidth, inner sep=0pt},
  source/.style={align=center, font=\footnotesize,
                 text width=.24\linewidth, inner sep=0pt,
                 text=black!65}
]
  \draw[black!30, line width=.45pt]
    (30,1.43) -- (30,1.58) -- (158,1.58) -- (158,1.43);
  \node[fill=white, inner xsep=7pt, font=\small] at (94,1.58)
    {Connected regime: $\kappa$-Braess paradox};

  \node[phase, text=phasegrey, text width=.18\linewidth]
    at (15,.88) {Disconnected};
  \node[phase, text=phaseorange] at (51.5,.88)
    {Edges and nonedges};
  \node[phase, text=phaseblue] at (94.5,.88)
    {Nonedges only};
  \node[phase, text=phasegreen] at (137,.96) {Neither};
  \node[font=\footnotesize, text=phasegreen] at (137,.57)
    {%
    };

  \fill[phasegrey!22]   (0,.12)    rectangle (29.5,.32);
  \fill[phaseorange!25] (30.5,.12) rectangle (72.5,.32);
  \fill[phaseblue!25]   (73.5,.12) rectangle (115.5,.32);
  \fill[phasegreen!25]  (116.5,.12) rectangle (157.5,.32);
  \draw[-{Stealth[length=2.4mm,width=1.7mm]}, line width=.65pt]
    (0,0) -- (160,0);

  \foreach \x/\lab in {30/1,73/C^*,116/2C^*}{
    \draw[line width=.55pt] (\x,-.08) -- (\x,.39);
    \node[anchor=north, inner sep=2pt, font=\small]
      at (\x,-.12) {$\lab$};
  }
  \node[anchor=north east, inner sep=2pt, font=\small]
    at (160,-.12) {$\avdeg/\log n$};

  \node[source, text width=.185\linewidth] at (15,-.88)
    {};
  \node[source] at (51.5,-.88)
    {Thm.~\ref{thm:edge-threshold}$(i)$\\
     Thm.~\ref{thm:nonedge-threshold}$(i)$};
  \node[source] at (94.5,-.88)
    {Thm.~\ref{thm:edge-threshold}$(ii)$\\
     Thm.~\ref{thm:nonedge-threshold}$(i)$};
  \node[source] at (137,-.88)
    {Thm.~\ref{thm:edge-threshold}$(ii)$\\
     Thm.~\ref{thm:nonedge-threshold}$(ii)$};
\end{tikzpicture}
\caption{Phase diagram of the occurrence of the $\kappa$-Braess paradox in the Erd\H{o}s--R\'enyi
random graph $G(n,p)$, where
  $\avdeg=(n-1)p$ and $C^*=1/(1-\log 2)$.
  All conclusions hold whp 
  outside
 fixed-width multiplicative neighbourhoods of the thresholds $C^*$ and $2C^*$ on the
  $\avdeg/\log n$ axis.}
\label{fig:braess-phases}
\end{figure}

\paragraph*{Proof strategy}
First of all, in~\Cref{sec_spectral_expansion}, 
we split Kemeny's constant into a sum of
spectral moments of the matrix %
obtained from the normalised adjacency matrix by removing the principal eigenspace, 
{following the standard spectral decomposition method for hitting times from~\cite[\S3]{lovasz1993random}.}
We then treat
the dense regime first. Throughout~\Cref{sec:dense-uniform-estimates},
we allow $p=p(n)$ to vary while remaining bounded away from zero, compute
the quadratic and cubic spectral moments explicitly, and control all
higher moments by operator-norm estimates. This yields the stronger
conclusion that, whp, adding any nonedge decreases
Kemeny's constant and deleting any edge increases it.

The sparse regime $p=o(1)$ is treated in
\Cref{sec:logarithmic-sparse-regime}. There we derive an
edge-perturbation formula and combine it with uniform random-graph
estimates, existence results for suitable low-degree vertices and
edges, and large-deviation bounds. The
lower-threshold statements automatically belong to this sparse regime.
For the upper-threshold statements, we combine the sparse and dense
arguments by passing to subsequences: along any subsequence, either
$p\to0$, in which case the sparse argument applies, or $p$ is bounded
away from zero on a further subsequence, in which case the dense argument
applies. This proves the upper-threshold statements for arbitrary
$p=p(n)$.

The distinction between the constants has a simple probabilistic origin.
For both additions and deletions, the first-order sign is governed by whether
the sum of the endpoint degrees is smaller or larger than $\avdeg$.  The
large-deviation cost of two endpoints near degree $\avdeg/2$ is
$1-\log2$.  There are order $n^2$ candidate nonedges but only
order $n^2p$,  {equivalently order $\avdeg n$}, candidate edges, which doubles the nonedge
threshold constant.

\paragraph*{Related work}
The programme of understanding network performance via the analysis of Kemeny's constant and the identification of Braess-generating structures in graphs has recently attracted a considerable and growing amount of attention across research communities in matrix theory, random graph theory, optimisation, and theoretical computer science. Much of the work in this area concentrates on estimating Kemeny's constant value for classes of graphs or natural regimes of random graphs through
spectral methods, perturbation theory, and
combinatorial formulae, often based on the effective resistance and Kirchhoff index.
Spectral approaches include approximation by sparsification~\cite{abiad2025spectral}
and bounds obtained through matrix compression, pinching inequalities, and
eigenvalue interlacing~\cite{abiad2026kemenysconstantmatrixcompression}. 
For Erd\H{o}s--R\'enyi graphs, asymptotic and concentration estimates for
Kemeny's constant have been established in various density regimes
in~\cite{lowe2014hitting,sylvester2021random,lowe2023central} via spectral methods.
Combinatorial formulae and bounds on Kemeny's constant have been obtained for
graphs assembled from highly symmetric components~\cite{palacios2020kemeny,palacios2021kemeny}, flower graphs~\cite{faught2020resistance},
trees of prescribed order and diameter~\cite{ciardo2022kemeny}, and threshold graphs~\cite{breen2025threshold}.
Various decomposition methods for computing Kemeny's constant have been proposed, via techniques exploiting harmonic eigenvectors for twin subgraphs~\cite{breen2019computing},
stochastic complements~\cite{bini2024kemeny},
cut vertices~\cite{faught20221}, and bridges~\cite{breen2022kemeny}.
Further work addresses sensitivity to changes in transition probabilities~\cite{breen2019structured}, minimisation under structural constraints for reversible Markov chains~\cite{durastante2025kemeny}, analogues for non-backtracking walks~\cite{breen2026defining}, as well as diameter bounds and related Nordhaus--Gaddum problems for Kemeny's constant~\cite{li2026conjecturesconcerningkemenysconstant,knudson2024nordhaus,kim2026bounds}.
Combinatorial formulae based on the resistance distance~\cite{KleinRandic1993,koolen2013electric,klein2002resistance,bapat1999resistance,wang2017kemeny} and the 2-forest matrix and Kirchhoff's theorem~\cite{KirklandZeng,barrett2020spanning} have proved effective at linking the occurrence of Braess paradox for Kemeny's constant to constant-size pendent substructures (``limbs'') of the base graph. In particular, it was shown in~\cite{KirklandZeng} that a cherry pair induces a $\K$-Braess nonedge in any tree on at least four vertices.\footnote{For example,
the three-leaf star $K_{1,3}$ satisfies $\K=5/2$, whereas joining two leaves
gives $\K=61/24>5/2$ {(this follows e.g.,~from~\cite[Remark 3.2]{KirklandZeng})}.}  This was used in the same paper to obtain a Schwenk-type result on the occurrence of $\K$-Braess nonedges in almost all trees.
Later work 
showed that cherry pairs induce $\K$-Braess nonedges in any connected graph on at least $4$ vertices~\cite{ciardo2020braess}. Combining this with asymptotic results from probabilistic combinatorics on the occurrence of limbs in addable graph classes~\cite{mcdiarmid2005random,mcdiarmid2006random}, it was deduced therein that $\K$-Braess nonedges appear in almost all connected planar graphs.  
The analysis was further extended in~\cite{kim2021families} to the case of twin pendent paths, see also the thesis~\cite{kim2021qualitative}. Furthermore,~\cite{faught20221} used their $1$-separation formula to identify pendent configurations yielding \textit{$\K$-Braess sets}: sets of nonedges whose simultaneous addition increases Kemeny’s constant. 
Recently,~\cite{breen2026kemeny} initiated the study of $\K$-Braess sets forming a clique;\footnote{Note that $\K$-Braess nonedges are the specialisation of this notion to the case of a $2$-clique.} via tools based on the accessibility index and equitable partitions, they showed that, for any fixed $\ell$, almost all connected planar graphs have $\K$-Braess $\ell$-cliques, thus extending the results of~\cite{KirklandZeng,ciardo2020braess}.
We observe that criteria on the occurrence of $\K$-Braess paradox based on fixed limbs, such as those discussed above, cannot establish the paradox in the regime considered here, as any fixed nontrivial limb disappears whp strictly below the connectivity threshold.
As a consequence, for the nonedges constructed in the proof of~\Cref{thm:nonedge-threshold}(i), we establish the $\K$-Braess property  
via a global spectral analysis of the surrounding random graph $G(n,p)$, rather than by exhibiting a local Braess-generating substructure.
Further work 
studies $\K$-Braess paradox in complete multipartite and split graphs~\cite{hu2019complete},
enumerates $\K$-Braess nonedges in paths, spiders, and brooms~\cite{jang2025kemeny}, and determines extremal changes in Kemeny's constant caused by adding an edge to a tree~\cite{kirkland2025edge}.

\paragraph*{Acknowledgements}
M. Bodirsky is funded by the European Union (project POCOCOP, ERC Synergy grant No. 101071674). 
Views and opinions expressed are however those of the author(s) only and do not necessarily reflect those of the European Union or the European Research Council Executive Agency. Neither the European Union nor the granting authority can be held responsible for them. L. Ciardo is supported in part by the Steiermärkischen Landesregierung, Referat Wissenschaft und Forschung [grant ref. no. ABT12-77337/2026-6].  M. Kang is supported in part by the Austrian Science Fund (FWF) [10.55776/F1002].

Proof search for this work was assisted by ChatGPT. The main results emerged through multiple rounds of conversations with ChatGPT 5.6 Sol, restructuring and simplifications of the resulting arguments by the authors, and further conversations informed by these revisions. In addition, proof verification was assisted by Claude 5 Fable, Claude Opus 5.5, and ChatGPT 6 Astra. The authors take full responsibility for the mathematical content of the paper.

\paragraph*{Notation}
We denote by $\one$ the all-one vector, whose size will be clear from the context.
If $A$ denotes a matrix, then we typically denote its {entries} by $a_{ij}$  or $A_{ij}$. 
We let $\spec(A)$ denote the spectrum of a square matrix $A$.  
Given a graph $G=(V(G),E(G))$ and a vertex $u\in V(G)$, the standard basis vector indexed by $u$, {which has coordinate $1$ at $u$ and $0$ at every other vertex, is denoted by $e_u$}.
{We shall often write $V$ for $V(G)$ and $E$ for $E(G)$ when the graph $G$ is clear from the context.}
We write $N_G(u)$ for the neighbourhood of $u$ in $G$, $d_G(u)$ for the degree of $u$ in $G$,  and $t_G(u)$ for the number of triangles containing $u$ in $G$. For $u,v \in V$, we write $u \sim_G v$ to denote that $u$ and $v$ are adjacent in $G$. If the underlying graph $G$ is clear from the context, we  omit the subscript and write $N(u)$, $d_u$, $t_u$, and $u \sim v$, to ease notation. We also let $\mathbf{1}_{\{i \in \{u,v\}\}}$ equal 
1 if $i \in \{u,v\}$ and 0 otherwise. %

We refer to Glossary~\ref{glo:op} for a more detailed description of the terminology and basic notions from spectral graph theory and matrix theory that are used throughout the paper. 

\section{Preliminaries: A spectral-moment expansion}
\label{sec_spectral_expansion}
We begin by recalling standard spectral facts about random walks and normalised graph matrices; see Chung \cite[\S\S1.1--1.3]{Chung97} and Levin, Peres, and Wilmer \cite[\S12.1]{LevinPeresWilmer}. We include the details needed to fix the notation and to derive the spectral-moment expansion used below.
In the following, let $G=(V,E)$ be a connected simple undirected graph.
Let $A$ be the adjacency matrix of $G$, 
and let 
 $D=\diag(d_v:v\in V)=\diag(A\one)$ 
be the diagonal degree matrix of $G$. 
The \emph{normalised adjacency matrix}
\[
  S:=D^{-1/2}AD^{-1/2}
\]
of $G$ is similar to the \textit{transition matrix} $T:=D^{-1}A$, since $D^{-1/2}SD^{1/2}=D^{-1}A=T$,
and hence $T$ and $S$ have the same trace $\tr(T)=\tr(S)$ and the same eigenvalues.  
Note that $T$ is stochastic, and hence has the eigenvalue $1$ (which is also called the \emph{trivial} eigenvalue) with right eigenvector $\mathbf 1$.
Define
\[
  \phi:=\frac{D^{1/2}\one}{\sqrt{2|E|}},
  \qquad \Pi:=\phi\phi^{\top}=\frac{1}{2|E|}D^{1/2}\one\one^\top D^{1/2},
  \qquad B:=S-\Pi.
\]

Observe that
\begin{align*}
    S\phi
    &=
    \frac{1}{\sqrt{2|E|}}D^{-1/2}AD^{-1/2}D^{1/2}\one
    =
    \frac{1}{\sqrt{2|E|}}D^{-1/2}A\one
    =
    \frac{1}{\sqrt{2|E|}}D^{-1/2}D\one\\
    &=
    \frac{1}{\sqrt{2|E|}}D^{1/2}\one
    =\phi.
\end{align*}
Since $S$ is symmetric, we deduce that \begin{align} S\Pi=\Pi=\Pi S. \label{eq:Pi}
\end{align} As usual, let
$1  = \lambda_1 > \lambda_2\geq \ldots\geq \lambda_n$ be the eigenvalues of $T$ (and of $S$).
Note that $\phi$ has unit norm, since
\begin{align*}
    \phi^\top\phi=\frac{1}{2|E|}\one^\top D\one=\frac{2|E|}{2|E|}=1.
\end{align*}
It follows that $\Pi$ is idempotent: $\Pi^2=\Pi$. 
Moreover, $B\phi=S\phi - \Pi\phi=\phi-\phi=0$, and $Bv=Sv$ for each vector $v$ orthogonal to $\phi$. 
Thus, $B$ is obtained from $S$ by removing its principal eigenspace.
It follows that $\spec(B)=(0,\lambda_2,\ldots,\lambda_n)$.
Moreover,
$\tr(S)=0$ (since $G$ has no self-loops) and $\tr(\Pi) =\phi^\top\phi= 1$, so $\tr(B)=-1$.

Next, we decompose Kemeny's constant through the standard spectral decomposition method for hitting times from~\cite[\S3]{lovasz1993random}; see also~\cite{helali2019hitting,lowe2014hitting,lowe2023central}.
Whenever $\opnorm{B}<1$ (i.e., whenever $G$ is connected and nonbipartite; see  Glossary~\ref{glo:op}), expanding the spectral formula for Kemeny's constant in~\eqref{eqn_spectral_formula_kemeny_constant} via a geometric series gives
\begin{align}
  \K(G)
    =\sum_{i=2}^n\frac{1}{1-\lambda_i} 
    & =\sum_{i=2}^n\sum_{r\geq 0}\lambda_i^r \notag && \text{(here we use $\opnorm{B} <1$)}\\
    &
    =\sum_{r\geq 0}\sum_{i=2}^n\lambda_i^r \notag\\
     &=(n-1)+  \tr(B) + \sum_{r\ge2}\tr(B^r) \notag\\  
    &=n-2+\sum_{r\ge2}\tr(B^r).          \label{eq:K-expansion}
\end{align}
The quadratic moment in \eqref{eq:K-expansion} (i.e., the summand of \eqref{eq:K-expansion} for $r=2$) is the first moment that changes under an edge perturbation and will provide the leading term {of the change}. The cubic moment in \eqref{eq:K-expansion} will also be treated explicitly to reveal a cancellation at order $(\avdeg)^{-2}$. As we shall see {in Section~\ref{sec_higher_order_tail}},  all moments of order at least four can then be controlled collectively by operator-norm estimates. 

When an edge is added, 
the changes in the summands of \eqref{eq:K-expansion}  
for $r=2$ and $r=3$ can be computed directly by considering weighted closed walks in $G$ of lengths two and three, in the following sense. 
A \emph{closed walk of length $r$} is a sequence
\[
v_0,v_1,\ldots,v_r=v_0
\]in which every two consecutive vertices are adjacent in $G$. The \emph{$S$-weight} 
of this walk is the product 
$S_{v_0v_1}S_{v_1v_2}\cdots S_{v_{r-1}v_0}$; note that $\operatorname{tr}(S^r)
=
\sum_{v_0,\ldots,v_{r-1}}
S_{v_0v_1}S_{v_1v_2}\cdots S_{v_{r-1}v_0}$,  where the sum is taken over all closed walks of length $r$.
{
Note that, for every $r\geq 1$, $\spec(S^r)=(1,\lambda_2^r,\dots,\lambda_n^r)$ and $\spec(B^r)=(0,\lambda_2^r,\dots,\lambda_n^r)$, so
\begin{equation}\label{eq:B-S-moments}
  \tr(B^r)=\tr(S^r)-1.
\end{equation}
}

\section{Uniform estimates in the dense regime}
\label{sec:dense-uniform-estimates}

Fix $p_0\in(0,1)$.  Throughout the dense-regime argument, we let
$p=p(n)\in[p_0,1]$ and sample $G\sim G(n,p)$.
The purpose of the rest of this section  is to prove the following result.

\begin{theorem}
\label{thm:dense-exclusion}
Whp, $G$ is connected and both
of the following hold:
\begin{enumerate}[label=\textnormal{(\roman*)},leftmargin=2.2em]
  \item $\K(G+uv)<\K(G)$ for every nonedge $uv$ of $G$;
  \item $G-e$ is connected and $\K(G-e)>\K(G)$ for every edge
  $e$ of $G$.
\end{enumerate}
\end{theorem}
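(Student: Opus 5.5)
Our plan is to use the expansion \eqref{eq:K-expansion} for both $G$ and the perturbed graph $G'=G\pm uv$, and to compare them term by term. Whp $G$ is connected and nonbipartite, and all degrees satisfy $d_w=(n-1)p\,(1+O(\sqrt{\log n/(np)}))$, uniformly in $w$. Standard random-matrix bounds give $\opnorm{B}=O(1/\sqrt{np})=O(n^{-1/2})$. Since deleting or adding one edge changes $B$ only by a matrix of small norm, the same bound holds uniformly for every $G'$, and in particular every $G-e$ is connected. Writing $\Delta_r:=\tr(S'^r)-\tr(S^r)=\tr(B'^r)-\tr(B^r)$ by \eqref{eq:B-S-moments}, we get
\[
\K(G')-\K(G)=\Delta_2+\Delta_3+\sum_{r\ge4}\bigl(\tr(B'^r)-\tr(B^r)\bigr).
\]
The goal is to show that $\Delta_2$ has the right sign with magnitude of order $1/\avdeg^{2}\asymp n^{-2}$, and that everything else is $o(n^{-2})$, uniformly over all $O(n^2)$ pairs $uv$.

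\textbf{Quadratic term.} We have $\tr(S^2)=\sum_{x\sim y}2/(d_xd_y)$. Adding $uv$ contributes the new term $2/((d_u+1)(d_v+1))$. It also shrinks every term $2/(d_ud_y)$ for $y\sim u$, by about $2/(d_u^2 d_y)$ each; summed over $y\in N(u)$ this is about $2/d_u^2$, and similarly for $v$. Hence
\[
\Delta_2\approx \frac{2}{d_ud_v}-\frac{2}{d_u^2}-\frac{2}{d_v^2}\approx -\frac{2}{\avdeg^2}.
\]
More precisely, with concentrated degrees $\Delta_2=-2/\avdeg^2\,(1+o(1))$, and this is exactly where the first-order sign is governed by degrees. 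For deletion the signs reverse and $\Delta_2=+2/\avdeg^2(1+o(1))$. We will carry out this computation with explicit error terms of order $O(\sqrt{\log n/n}\,n^{-2})$ coming from degree fluctuations.

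\textbf{Cubic and tail terms.} $\Delta_3$ counts weighted closed walks of length $3$, i.e.\ triangles weighted by $\prod 1/d$. Adding $uv$ creates the $|N(u)\cap N(v)|\approx np^2$ triangles through $uv$, each of weight about $\avdeg^{-3}$, and reweights the $t_u+t_v\approx n^2p^3$ existing triangles at $u,v$ by relative changes of order $1/\avdeg$. Each of these contributions is of order $p^{-1}n^{-2}$ individually, and they should cancel to leading order: this is the cancellation the paper announces. We will verify that the net is $O(n^{-2}\cdot\text{(fluctuation)})=o(n^{-2})$, using concentration of codegrees and triangle counts. For $r\ge4$ we write $B'=B+E$, where $E$ is of rank $O(1)$ in its dominant part plus a diagonal-rescaling part. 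Expanding $\tr((B+E)^r)-\tr(B^r)=r\tr(B^{r-1}E)+\dots$ and bounding
\[
|\tr(B^{r-1}E)|\le \opnorm{B}^{r-2}\,\bigl|\tr(BE)\bigr|\text{-type quantities}
\]
by nuclear norms, we should get $\sum_{r\ge4}=O(n^{-1}\cdot\opnorm{B}^2)=O(n^{-2}\cdot n^{-?})$. Here care is needed: a naive bound gives only $O(\nucnorm{E}\opnorm{B}^{3})=O(n^{-1}n^{-3/2})=o(n^{-2})$, which suffices provided $\nucnorm{E}=O(1/n)$. This is plausible, since $E$ is essentially $\frac{1}{\avdeg}(e_ue_v^\top+e_ve_u^\top)$ plus the rescaling of rows/columns $u,v$, each of norm $O(1/\sqrt{np}\cdot 1/\avdeg^{1/2})$.

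\textbf{Main obstacle.} We expect the hardest part to be the cubic term and the degree-rescaling part of $E$ in the tail. The rescaling perturbs two entire rows, and its nuclear norm is only $O(n^{-1})$ after the centering by $\Pi$ is taken into account, since $\Pi'$ also changes. We would try to handle this first by writing $S'=D'^{-1/2}A'D'^{-1/2}$ as an exact rank-$\le4$ update of $S$ (the diagonal change is supported on $\{u,v\}$), and by treating $\Pi'-\Pi$ separately via $\phi'-\phi=O(n^{-3/2})$ in norm. A union bound over all pairs then completes both (i) and (ii).
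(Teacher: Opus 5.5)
Your proposal is correct and follows essentially the same route as the paper. It uses the moment expansion \eqref{eq:K-expansion}, the exact quadratic change $-2/\avdeg^{2}\,(1+O(\varepsilon_n))$, and the leading-order cancellation in the cubic moment via uniform codegree and triangle concentration. Its tail bound is of order $\nucnorm{B'-B}\,\rho^{3}$ with $\rho=\max\{\opnorm{B},\opnorm{B'}\}$ and $\nucnorm{B'-B}=O(n^{-1})$, obtained exactly as you suggest from the low-rank update $S'=\Gamma S\Gamma+Z$ and $\lVert\phi'-\phi\rVert_2=O(n^{-3/2})$.

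The differences are minor. The paper handles deletions by checking that $G-e$ again satisfies the concentration event (with a larger constant) and then adding $e$ back, rather than redoing the computation with reversed signs. You should also discard your intermediate estimate $O(n^{-1}\opnorm{B}^{2})$, which is only $O(n^{-2})$, the same order as the main term. Use instead the telescoping bound $\lvert\tr(B'^{r}-B^{r})\rvert\le r\,\nucnorm{B'-B}\,\rho^{r-1}$, which gives $O(n^{-1}\rho^{3})=o(n^{-2})$ after summing over $r\ge4$.
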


We start with the following lemma, which gives uniform estimates on the degrees, number of common neighbours, number of triangles containing a fixed vertex, and operator norm of $B$.
Assertions~(i)--(iii)
are standard consequences of Chernoff bounds for
binomial random variables (see, for example,
\cite[Theorem~2.1]{JansonLuczakRucinski}), while (iv) uses the self-adjoint matrix Bernstein inequality
\cite[Theorem~1.4]{Tropp}. 
For completeness, we give a self-contained proof in Appendix~\ref{appendix_omitted_proofs}.

In the following, we let $\avdeg:=p(n-1)$
be the expected vertex degree, and set
$\varepsilon_n:=\sqrt{\frac{\log n}{n}}$.
 $\varepsilon_n$ should be thought of as the dense-regime concentration scale. 

\begin{lemma}
\label[lemma]{lem:event}
There exists a constant $C=C(p_0)>0$ such that the following holds. For $n\geq 3$ and $p=p(n)\in[p_0,1]$, let $\mathcal E_{n,p}$ be the event that $G\sim G(n,p)$ simultaneously satisfies the following conditions:
\begin{enumerate}[label=\textnormal{(\roman*)},leftmargin=2.2em]
  \item $\left|d_v/\avdeg-1\right|\leq C\varepsilon_n$ for every vertex $v$;%
  \item for every two distinct vertices $u,v$,
  \[
    \left|\frac{|N(u)\cap N(v)|}{(n-2)p^2}-1\right|\leq C\varepsilon_n;
  \]
  \item if $t_v$ is the number of triangles in $G$ containing $v$, then
  \[
    \left|\frac{t_v}{\binom{n-1}{2}p^3}-1\right|\leq C\varepsilon_n
    \quad\text{for every }v;
  \]
  \item $G$ is connected and $\opnorm{B}\leq C\varepsilon_n$.
\end{enumerate}
Then, 
\begin{align*}
    \lim_{n\to \infty}\inf_{p\in[p_0,1]}\Pp(\mathcal E_{n,p})=1.
\end{align*}
\end{lemma}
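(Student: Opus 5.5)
The plan is to prove each of (i)--(iv) separately, showing for each one that its failure probability is at most $n^{-1}$ (or $o(1)$) uniformly in $p\in[p_0,1]$, and then to take a union bound. We will choose $C$ large in terms of $p_0$ at the end. Uniformity in $p$ comes from the fact that every tail bound we use depends on $p$ only through lower bounds such as $\avdeg\ge p_0(n-1)$ and $p^2\ge p_0^2$.

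For (i), $d_v\sim\mathrm{Bin}(n-1,p)$. The Chernoff bound $\Pp(|X-\mu|\ge t\mu)\le 2\exp(-t^2\mu/3)$ with $\mu=\avdeg\ge p_0(n-1)$ and $t=C\varepsilon_n$ gives a failure probability of at most $2\exp(-C^2p_0\log n/4)\le n^{-3}$ per vertex. A union bound over the $n$ vertices then suffices.

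For (ii), $|N(u)\cap N(v)|\sim\mathrm{Bin}(n-2,p^2)$ with mean at least $p_0^2(n-2)$. The same Chernoff bound and a union bound over the $\binom n2$ pairs give the claim.

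For (iii), $t_v$ is not binomial, so we condition on $N(v)$. Given $d_v=d$, the count $t_v$ is $\mathrm{Bin}(\binom d2,p)$. We intersect with the event in (i), on which $\binom{d_v}{2}=\binom{n-1}{2}p^2(1+O(C\varepsilon_n))$, and apply Chernoff to the conditional binomial, whose mean is of order $n^2$. That tail is far smaller than $n^{-2}$, and the two relative errors combine to $O(C\varepsilon_n)$; we then enlarge $C$. An alternative would be McDiarmid's inequality over the edges not at $v$, but conditioning is simpler.

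For (iv), connectivity is immediate: by (ii) every two vertices have a common neighbour, so the diameter is at most $2$. The operator-norm bound is the main obstacle. We will write $A=\E A+(A-\E A)$, where $\E A=p(J-I)$. We then compare $B$ with the corresponding object built from $\bar D=\avdeg I$, as follows.

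\emph{Step 1.} By matrix Bernstein for the sum of independent symmetric matrices $X_{ij}=(a_{ij}-p)(e_ie_j^\top+e_je_i^\top)$, we get $\opnorm{A-\E A}\le C'\sqrt{n\log n}$ with probability $1-n^{-2}$. Here $\opnorm{X_{ij}}\le1$ and the variance proxy is at most $np$.

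\emph{Step 2.} Set $\tilde S=\avdeg^{-1}A$ and $\psi=\one/\sqrt n$. Then $\tilde S-\psi\psi^\top=\avdeg^{-1}(A-\E A)+O(1/n)$ in operator norm, and this is $O(\varepsilon_n)$ since $\avdeg\ge p_0(n-1)$.

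\emph{Step 3.} We control $S-\tilde S$ using (i). Writing $D^{-1/2}=\avdeg^{-1/2}(I+\Delta)$ with $\opnorm{\Delta}=O(C\varepsilon_n)$, we have $S-\tilde S=\avdeg^{-1}(\Delta A+A\Delta+\Delta A\Delta)$. Since $\opnorm{A}\le\avdeg(1+o(1))$, this is $O(C\varepsilon_n)$. Likewise $\phi-\psi=O(C\varepsilon_n)$ in Euclidean norm, so $\opnorm{\Pi-\psi\psi^\top}=O(C\varepsilon_n)$.

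Combining Steps 2 and 3 gives $\opnorm{B}=\opnorm{S-\Pi}\le C''\varepsilon_n$. The point to watch is Step 3. The naive bound $\opnorm{\Delta}\opnorm{A}/\avdeg$ is already of order $\varepsilon_n$ only because $\opnorm{A}=O(\avdeg)$, so we must make sure the constants depend only on $p_0$ and not on $C$ in a circular way. To handle this, we will fix the constant from (i) first and then choose the constant for (iv).
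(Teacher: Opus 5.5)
Your proposal is correct and follows essentially the same route as the paper. The paper also uses Chernoff bounds with union bounds for degrees and codegrees, conditions on $N(v)$ for the triangle counts, gets connectivity from common neighbours, and applies matrix Bernstein to $A-p(J-I)$ followed by the comparisons $S\approx A/\avdeg$, $A/\avdeg\approx J/n$ and $\Pi\approx J/n$. Fixing the degree constant first and then taking a maximum to avoid circularity matches the paper's choice $a=6/p_0$ and $C=\max\{a,\,b+2,\,13a+8/p_0+1\}$.
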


We henceforth work on the event $\mathcal E_{n,p}$ in Lemma~\ref{lem:event}.  Because the same
deterministic constant $C=C(p_0)$ works for \textit{every} vertex and pair, all assertions in the remaining part of this section
hold simultaneously for all nonedges; in particular, the nonedge may be
chosen after observing $G$.

\subsection{The two leading spectral moments}
\label{sec_leading_spectral_moments}
Fix $u,v \in V(G)$. For later use, we introduce the following quantities. 
\begin{align*}
  R_u&:=\sum_{w\sim u}\frac{1}{d_w},\\
  C_{uv}&:=\sum_{w\in N(u)\cap N(v)}\frac{1}{d_w},\\
  Q_u&:=\sum_{\substack{\{w,w'\}\subseteq N(u)\\w\sim w'}}
          \frac{1}{d_wd_{w'}}. 
\end{align*}
The next result follows from~\Cref{lem:event} by replacing every degree in the corresponding sum by
$\avdeg$, and then using the degree, codegree, and triangle estimates.

\begin{lemma}\label[lemma]{lem:local}
On the event $\mathcal E_{n,p}$ of Lemma~\ref{lem:event}, 
for every pair of distinct vertices $u,v$,
\begin{align}
  R_u&=1+O(\varepsilon_n),                  \label{eq:R}\\
  C_{uv}&=p\frac{n-2}{n-1}+O(\varepsilon_n),                   \label{eq:C}\\
  Q_u&=\frac{p}{2}\frac{n-2}{n-1}+O(\varepsilon_n).            \label{eq:Q}
\end{align}
\end{lemma}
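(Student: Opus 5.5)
The plan is to replace every degree in each sum by $\avdeg$, paying a multiplicative error of $1+O(\varepsilon_n)$ via Lemma~\ref{lem:event}(i). Then we count the remaining terms using Lemma~\ref{lem:event}(i)--(iii).

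First we need a degree estimate. On $\mathcal E_{n,p}$, every $w$ satisfies $d_w=\avdeg(1+\delta_w)$ with $|\delta_w|\le C\varepsilon_n$. Since $\varepsilon_n\to0$, for large $n$ we have $C\varepsilon_n\le 1/2$, and hence
\[
\frac{1}{d_w}=\frac{1}{\avdeg}\bigl(1+O(\varepsilon_n)\bigr),
\qquad
\frac{1}{d_wd_{w'}}=\frac{1}{\avdeg^2}\bigl(1+O(\varepsilon_n)\bigr),
\]
uniformly in $w,w'$ and with constants depending only on $C$. The bound is uniform, so each sum equals (number of terms)$\times$(common value)$\times(1+O(\varepsilon_n))$.

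The three estimates then follow in turn.

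\textbf{(a) The estimate for $R_u$.} We have $R_u=\frac{d_u}{\avdeg}(1+O(\varepsilon_n))$. Lemma~\ref{lem:event}(i) gives $d_u/\avdeg=1+O(\varepsilon_n)$, which yields \eqref{eq:R}.

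\textbf{(b) The estimate for $C_{uv}$.} We have $C_{uv}=\frac{|N(u)\cap N(v)|}{\avdeg}(1+O(\varepsilon_n))$. By (ii), $|N(u)\cap N(v)|=(n-2)p^2(1+O(\varepsilon_n))$. Dividing by $\avdeg=(n-1)p$ gives $p\frac{n-2}{n-1}(1+O(\varepsilon_n))$. Since $p\le1$, the error is $O(\varepsilon_n)$ additively.

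\textbf{(c) The estimate for $Q_u$.} The number of terms in $Q_u$ is exactly the number of edges inside $N(u)$, which equals $t_u$. Hence $Q_u=\frac{t_u}{\avdeg^2}(1+O(\varepsilon_n))$. By (iii), $t_u=\binom{n-1}{2}p^3(1+O(\varepsilon_n))$. Dividing by $(n-1)^2p^2$ gives $\frac{p}{2}\frac{n-2}{n-1}(1+O(\varepsilon_n))$, and again the bounded prefactor turns the relative error into an additive $O(\varepsilon_n)$.

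There is no real obstacle here. The only points requiring care are:
\begin{itemize}
\item[\textbullet] keeping the implied constants uniform over all vertices, pairs and $p\in[p_0,1]$, which holds because $C=C(p_0)$ is deterministic and all leading quantities are bounded by $1$;
\item[\textbullet] handling the small-$n$ case where $C\varepsilon_n$ may exceed $1/2$, which can be absorbed into the $O(\cdot)$ constant, or the claim can be stated for $n$ large;
\item[\textbullet] correctly identifying the number of adjacent unordered pairs in $N(u)$ with $t_u$.
\end{itemize}
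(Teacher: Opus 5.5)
Your proposal is correct and follows the paper's proof essentially step for step. You replace each reciprocal degree by $\avdeg^{-1}(1+O(\varepsilon_n))$ via Lemma~\ref{lem:event}(i), count the terms using $d_u$, the codegree bound (ii), and the identification of the summands of $Q_u$ with the $t_u$ triangles at $u$ together with (iii), and then turn the multiplicative error into an additive one because the main terms are bounded by $p\le 1$.
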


\begin{remark}
In~\Cref{lem:local} and henceforth, all constants hidden in the asymptotic notation $O(\cdot)$ may depend on $p_0$, but not on the
particular choice of $p\in[p_0,1]$.
In addition, all constants are deterministic and independent of $u$ and $v$;
in particular, the three estimates in~\Cref{lem:local} hold simultaneously for all pairs.
\end{remark}

\begin{proof}[Proof of~\Cref{lem:local}] 
By Lemma~\ref{lem:event}(i), for every vertex $w\in V(G)$, its degree satisfies
$d_w=\avdeg(1+O(\varepsilon_n))$, and hence also
$d_w^{-1}=\avdeg^{-1}(1+O(\varepsilon_n))$, with one deterministic constant
serving for all vertices.  Next, we combine such factors using that, as $\varepsilon_n \in o(1)$, 
\begin{align}
(1+O(\varepsilon_n))(1+O(\varepsilon_n))=1+O(\varepsilon_n).
\label{eq:O}
\end{align}

Fix $u,v \in V(G)$.
The sum $R_u=\sum_{w\sim u}d_w^{-1}$ has $d_u$ many $d_w^{-1}$ terms, each equal to
$\avdeg^{-1}(1+O(\varepsilon_n))$, and therefore 
\[
  R_u=d_u\,\avdeg^{-1}\bigl(1+O(\varepsilon_n)\bigr)
     =1+O(\varepsilon_n).
\]

The sum $C_{uv}=\sum_{w\in N(u)\cap N(v)}d_w^{-1}$ has $|N(u)\cap N(v)|$ many $d_w^{-1}$ terms, and by
Lemma~\ref{lem:event}(ii) we obtain
\[
  C_{uv}
  =(n-2)p^2\,\avdeg^{-1}\bigl(1+O(\varepsilon_n)\bigr)
  =p\frac{n-2}{n-1}\bigl(1+O(\varepsilon_n)\bigr)
  =p\frac{n-2}{n-1}+O(\varepsilon_n),
\]
where the last step uses that the main term is bounded by $p\le1$, so the
multiplicative error may be absorbed into an additive one.

Finally, the summands of $Q_u=\sum_{\substack{\{w,w'\}\subseteq N(u)\\w\sim w'}}d_w^{-1}d_{w'}^{-1}$ are indexed by the edges inside $N(u)$,
that is, by the $t_u$ triangles containing $u$, and each summand equals
$\avdeg^{-2}(1+O(\varepsilon_n))$.  Lemma~\ref{lem:event}(iii) therefore gives
\[
  Q_u
  =\binom{n-1}{2}p^3\,\avdeg^{-2}\bigl(1+O(\varepsilon_n)\bigr)
  =\frac{p}{2}\frac{n-2}{n-1}\bigl(1+O(\varepsilon_n)\bigr)
  =\frac{p}{2}\frac{n-2}{n-1}+O(\varepsilon_n).
\]

Each step invoked only the estimates of Lemma~\ref{lem:event}, whose
constants are deterministic and uniform over vertices and pairs; hence a
single constant works for all $u,v$ simultaneously.
\end{proof}

Henceforth, for a parameter $F$ of the graph $G$, we let $F'$ be the corresponding parameter of the graph obtained from $G$ after adding a nonedge $uv$. Also, we write
$\Delta F=F'-F$. 
We find the following.

\begin{lemma}
\label[lemma]{lem_1523_1008}
On the event $\mathcal E_{n,p}$ of Lemma~\ref{lem:event}, 
uniformly over nonedges
 $uv$ it holds that 
\begin{align}
    \Delta\tr(S^2)
    &=-\frac{2}{p^2n^2}
      +O\!\left(\frac{\sqrt{\log n}}{n^{5/2}}\right)
      \label{eq:delta2}
      \intertext{and}
      \Delta\tr(S^3)
  &=O\!\left(\frac{\sqrt{\log n}}{n^{5/2}}\right).
  \label{eq:delta3}
\end{align}
\end{lemma}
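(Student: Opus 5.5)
We compute both differences exactly via closed walks, as finite sums of local quantities around $u$ and $v$. We then insert the uniform estimates of \Cref{lem:event} and \Cref{lem:local}. Since $S_{ij}=A_{ij}/\sqrt{d_id_j}$, every closed walk of length $2$ traverses an edge $ij$ back and forth with weight $1/(d_id_j)$. Every closed walk of length $3$ traverses a triangle $ijk$ with weight $1/(d_id_jd_k)$, and each triangle carries $6$ such walks. Hence
\[
\tr(S^2)=2\sum_{ij\in E}\frac{1}{d_id_j},\qquad \tr(S^3)=6\sum_{\{i,j,k\}\text{ triangle}}\frac{1}{d_id_jd_k}.
\]
Adding the nonedge $uv$ changes only $d_u\mapsto d_u+1$ and $d_v\mapsto d_v+1$, and creates new edges or triangles only through $uv$. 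So only terms touching $u$ or $v$ change.

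\emph{Quadratic moment.} The new edge contributes $2/((d_u+1)(d_v+1))$. For each old edge $uw$, the factor $1/d_u$ becomes $1/(d_u+1)$, a change of $-1/(d_u(d_u+1))$. Summing over $w\sim u$, and symmetrically for $v$, gives the exact identity
\[
\Delta\tr(S^2)=\frac{2}{(d_u+1)(d_v+1)}-\frac{2R_u}{d_u(d_u+1)}-\frac{2R_v}{d_v(d_v+1)}.
\]
No old edge contains both $u$ and $v$, so there is no double counting. On $\mathcal E_{n,p}$, \Cref{lem:event}(i) gives $d_u,d_v=\avdeg(1+O(\varepsilon_n))$, and the shift by $1$ costs only a relative $O(1/n)=O(\varepsilon_n)$. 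Together with \eqref{eq:R}, each of the three terms equals $2\avdeg^{-2}(1+O(\varepsilon_n))$. The sum is therefore $-2\avdeg^{-2}+O(\varepsilon_n\avdeg^{-2})$. Finally $\avdeg^{-2}=(p(n-1))^{-2}=p^{-2}n^{-2}(1+O(1/n))$, and $\varepsilon_n\avdeg^{-2}=O(\sqrt{\log n}/n^{5/2})$ because $p\ge p_0$. This yields \eqref{eq:delta2}.

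\emph{Cubic moment.} The new triangles are exactly $uvw$ with $w\in N(u)\cap N(v)$. They contribute $6C_{uv}/((d_u+1)(d_v+1))$. Old triangles through $u$ are indexed by edges $ww'$ inside $N(u)$, and have total weight $6Q_u/d_u$. Replacing $d_u$ by $d_u+1$ multiplies this weight by $d_u/(d_u+1)$, a change of $-6Q_u/(d_u(d_u+1))$. The same holds for $v$, and no old triangle contains both $u$ and $v$, since $uv\notin E$. Thus
\[
\Delta\tr(S^3)=\frac{6C_{uv}}{(d_u+1)(d_v+1)}-\frac{6Q_u}{d_u(d_u+1)}-\frac{6Q_v}{d_v(d_v+1)}.
\]
By \eqref{eq:C} and \eqref{eq:Q}, the three terms are, up to additive errors $O(\varepsilon_n\avdeg^{-2})$, equal to $6p'\avdeg^{-2}$, $-3p'\avdeg^{-2}$ and $-3p'\avdeg^{-2}$, where $p'=p(n-2)/(n-1)$. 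The leading terms cancel exactly, leaving $O(\varepsilon_n\avdeg^{-2})=O(\sqrt{\log n}/n^{5/2})$, which is \eqref{eq:delta3}.

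\emph{Main obstacle.} The only delicate point is the bookkeeping of errors. One must check that converting the multiplicative errors $1+O(\varepsilon_n)$ into additive ones is legitimate, which holds because the main terms $R_u$, $C_{uv}$, $Q_u$ are bounded. One must also check that the $+1$ shifts and the replacement of $n-1$ by $n$ are absorbed in $O(\varepsilon_n)$. All constants come from the deterministic constant $C(p_0)$ of \Cref{lem:event}, so the bounds hold uniformly over all nonedges $uv$ simultaneously.
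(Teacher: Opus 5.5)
Your proposal is correct and matches the paper's proof step for step. You use the same exact identities for $\Delta\tr(S^2)$ and $\Delta\tr(S^3)$ in terms of $R_u$, $C_{uv}$ and $Q_u$, obtained by counting weighted edges and triangles, then substitute the estimates of \Cref{lem:event}(i) and \Cref{lem:local}, with the leading $p\frac{n-2}{n-1}\avdeg^{-2}$ terms cancelling exactly in the cubic moment.
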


\begin{proof}
Fix a nonedge $uv$.
The normalised adjacency matrix $S$ and the transition matrix $T$ of $G=(V,E)$ satisfy
\begin{align}
\label{eqn_2035_27_09}
 \tr(S^2)=\tr(T^2)
  =
  \sum_{w,w'\in V}T_{ww'}T_{w'w}
  =\sum_{\substack{w,w'\in V\\w\sim w'}}\frac{1}{d_wd_{w'}}
  =
  2\sum_{ww'\in E}\frac{1}{d_wd_{w'}}.
\end{align}
Hence, the quantity $\Delta \tr(S^2)$ only depends on the degrees of vertices $u$ and $v$ and those of their neighbours. In particular, we obtain the following identity.
\begin{align}\label{eq:delta2-exact}
  \Delta\tr(S^2)
  &=
\frac{2}{(d_u+1)(d_v+1)}+2\sum_{w\sim u}\frac{1}{d_w}\left(\frac{1}{d_u+1}-\frac{1}{d_u}\right)
+
2\sum_{w\sim v}\frac{1}{d_w}\left(\frac{1}{d_v+1}-\frac{1}{d_v}\right)\notag \\
&=\frac{2}{(d_u+1)(d_v+1)}
   -\frac{2R_u}{d_u(d_u+1)}
   -\frac{2R_v}{d_v(d_v+1)}.
\end{align}
Note that $d_u = \avdeg(1+O(\varepsilon_n))$ (\Cref{lem:event}) and $1/\avdeg=o(\varepsilon_n)$; thus, 
$$d_u+1 = \avdeg\left(1+O(\varepsilon_n)+\frac{1}{\avdeg}\right) = \avdeg(1 + O(\varepsilon_n)),$$ and
similarly $d_v+1 = \avdeg(1 + O(\varepsilon_n))$.
Hence,
\begin{align*}
    \frac{1}{(d_u+1)(d_v+1)}
    &=
    \frac{1}{\avdeg^2}+O\left(\frac{\varepsilon_n}{\avdeg^2}\right).
\end{align*}
By \eqref{eq:R}, $R_u =1+O(\varepsilon_n)$ and $R_v =1+O(\varepsilon_n)$, and thus 
\begin{align*}
    \frac{R_u}{d_u(d_u+1)}
    & = \frac{1}{\avdeg^2}+O\left(\frac{\varepsilon_n}{\avdeg^2}\right),\quad\mbox{and}\\
    \frac{R_v}{d_v(d_v+1)}
    &=
    \frac{1}{\avdeg^2}+O\left(\frac{\varepsilon_n}{\avdeg^2}\right).
\end{align*}
Combining this with~\eqref{eq:delta2-exact}, we deduce that 
\begin{align*}
\Delta\operatorname{tr}(S^2)
& {=
2\left(\frac{1}{\bar d^2}
+O\!\left(\frac{\varepsilon_n}{\bar d^2}\right)\right)
-2\left(\frac{1}{\bar d^2}
+O\!\left(\frac{\varepsilon_n}{\bar d^2}\right)\right)
-2\left(\frac{1}{\bar d^2}
+O\!\left(\frac{\varepsilon_n}{\bar d^2}\right)\right)}\\
&=
-\frac{2}{\bar d^2}
+O\!\left(\frac{\varepsilon_n}{\bar d^2}\right) \\
      &=
      -\frac{2}{p^2(n-1)^2}+O\!\left(\frac{\sqrt{\log n}}{n^{5/2}}\right)\\
    &=-\frac{2}{p^2n^2}
      +O\!\left(\frac{\sqrt{\log n}}{n^{5/2}}\right).         
\end{align*}
Similarly,
\[
  \tr(S^3)=\tr(T^3)=6\sum_{\{w,w', w''\}\text{ a triangle}}
               \frac{1}{d_wd_{w'}d_{w''}}.
\]
When comparing $\tr(S'^3)$ with $\tr(S^3)$, we note that the new triangles are precisely $uvw$ with
$w\in N(u)\cap N(v)$, while each old triangle through $u$ or $v$
has its weight reduced.
More in detail, we have
\begin{align*}
    \Delta\tr(S^3)
    &=
    \sum_{w\in N(u)\cap N(v)}\frac{6}{d_w(d_u+1)(d_v+1)}\\
    &\quad+\sum_{\substack{\{w,w'\}\subseteq N(u)\\w\sim w'}}\frac{6}{d_wd_{w'}}\left(\frac{1}{d_u+1}-\frac{1}{d_u}\right)\\
    &\quad+\sum_{\substack{\{w,w'\}\subseteq N(v)\\w\sim w'}}\frac{6}{d_wd_{w'}}\left(\frac{1}{d_v+1}-\frac{1}{d_v}\right).
\end{align*}
Recalling the quantities defined at the beginning of~\Cref{sec_leading_spectral_moments}, we deduce that
\begin{equation}\label{eq:delta3-exact}
  \Delta\tr(S^3)
  =6\left[
       \frac{C_{uv}}{(d_u+1)(d_v+1)}
       -\frac{Q_u}{d_u(d_u+1)}
       -\frac{Q_v}{d_v(d_v+1)}
     \right].
\end{equation}
Set $n'\coloneqq\frac{n-2}{n-1}$. 
{We will use that, since $\varepsilon_n=o(1)$ {and by~\eqref{eq:O}},
\[
\frac{1}{1+O(\varepsilon_n)}
=1+O(\varepsilon_n).
\]}
Substituting \eqref{eq:C}--\eqref{eq:Q} yields the following:
\begin{align*}
    \frac{C_{uv}}{(d_u+1)(d_v+1)}
    &=
    \frac{pn'+O(\varepsilon_n)}
     {\avdeg^2+\avdeg^2O(\varepsilon_n)}
=
\frac{pn'+O(\varepsilon_n)}
     {\avdeg^2(1+O(\varepsilon_n))}
     = \frac{pn'+O(\varepsilon_n)}
     {\avdeg^2}(1+O(\varepsilon_n))
=
\frac{pn'}{\avdeg^2}
+O\left(\frac{\varepsilon_n}{\avdeg^2}\right),
\\
    \frac{Q_u}{d_u(d_u+1)}
    &=\frac{\frac{p}{2}n'+O(\varepsilon_n)}{\avdeg^2+\avdeg^2 O(\varepsilon_n)}
    =
    \frac{pn'}{2\avdeg^2}+O\left(\frac{\varepsilon_n}{\avdeg^2}\right),
    \intertext{and, similarly,}
    \frac{Q_v}{d_v(d_v+1)}
    &=\frac{\frac{p}{2}n'+O(\varepsilon_n)}{\avdeg^2+\avdeg^2 O(\varepsilon_n)}
    =
    \frac{pn'}{2\avdeg^2}+O\left(\frac{\varepsilon_n}{\avdeg^2}\right).
\end{align*}
Therefore, the first-order terms in~\cref{eq:delta3-exact} cancel out, and we obtain
\begin{equation*}
  \Delta\tr(S^3)
  =O\!\left(\frac{\varepsilon_n}{\avdeg^2}\right)
  =O\!\left(\frac{\sqrt{\log n}}{n^{5/2}}\right).
\end{equation*}
This concludes the proof of the lemma.
\end{proof}

\subsection{The higher-order tail}
\label{sec_higher_order_tail}
At this point, it remains to show that the sum of the changes in all spectral moments of
order at least four  {in~\eqref{eq:K-expansion}} is smaller than the main term in \eqref{eq:delta2}. 

Fix a nonedge $uv$. Since $S = B+\Pi$, 
{and by~\eqref{eq:Pi}} we have $B \Pi = (S-\Pi) \Pi = \Pi - \Pi = 0$. Hence,
for every $r \geq 1$, 
$$S^r = (B+\Pi)^r = B^r + \Pi^r = B^r + \Pi.$$
Let
$D',S',\phi',\Pi'$, and $B'=S'-\Pi'$ be the matrices associated with
$G'=G+uv=(V,E')$. For example, we have $D'=\diag(d_i':i\in V)$ with $d'_i = d_i + \mathbf{1}_{\{i \in \{u,v\}\}}$ and  $\phi':=\frac{(D')^{1/2}\one}{\sqrt{2|E'|}}$.

As usual, we let $\Delta F=F'-F$ for any quantity $F$.
As $\tr(\Pi)=1$,
we have
$\Delta\tr(S^r)=\Delta\tr(B^r)$.
For $r=2,3$, we worked with $S$, since its traces admit convenient expressions in terms of edges and triangles of $G$. For the higher-order
tail, we return to $B$, whose powers decay because
$\opnorm{B}<1$.
In particular, our goal is now to prove the following. 

\begin{proposition}
\label[proposition]{lem:dense-higher-tail}
The following holds:
\begin{align}
  \sum_{r\ge4}
    \left|\Delta\tr(B^r)\right|
      =O\!\left(\frac{(\log n)^{3/2}}{n^{5/2}}\right).          \label{eq:tail}
\end{align}
\end{proposition}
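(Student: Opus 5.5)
Throughout, we work on the event $\mathcal E_{n,p}$ of Lemma~\ref{lem:event}. The idea is to write $B'=B+E$, with a perturbation $E:=B'-B$ that is tiny in operator norm and has small nuclear norm. We then bound $\Delta\tr(B^r)$ via the telescoping identity
\[
  \tr(B'^r)-\tr(B^r)=\sum_{j=0}^{r-1}\tr\bigl(B'^{\,j}\,E\,B^{r-1-j}\bigr),
\]
which gives
\[
  |\Delta\tr(B^r)|\le \nucnorm{E}\sum_{j=0}^{r-1}\opnorm{B'}^{j}\opnorm{B}^{r-1-j}.
\]
If both $\opnorm{B}$ and $\opnorm{B'}$ are at most $\beta:=C'\varepsilon_n$, the right-hand side is at most $r\beta^{r-1}\nucnorm{E}$. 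Summing over $r\ge4$ then gives $O(\beta^3\nucnorm{E})=O\bigl((\log n)^{3/2}n^{-3/2}\nucnorm{E}\bigr)$. So it suffices to show two things: $\opnorm{B'}=O(\varepsilon_n)$ and $\nucnorm{E}=O(1/n)$, uniformly over nonedges $uv$.

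\textbf{Step 1: decompose $E=\Delta S-\Delta\Pi$.}
The matrix $\Delta S$ has three parts.
\begin{itemize}[leftmargin=2.2em]
  \item The new entries $(d_u+1)^{-1/2}(d_v+1)^{-1/2}$ at positions $uv$ and $vu$. This is a rank-2 matrix of norm $O(1/\avdeg)=O(1/n)$.
  \item The rescaling of row and column $u$: entries $A_{uw}d_w^{-1/2}\bigl((d_u+1)^{-1/2}-d_u^{-1/2}\bigr)$. This is supported on row and column $u$, so it has rank at most $2$. Its Frobenius norm is of order $\sqrt{d_u}\cdot\avdeg^{-1/2}\cdot\avdeg^{-3/2}=O(1/n)$.
  \item The analogous rescaling of row and column $v$, bounded the same way.
\end{itemize}
Since rank is $O(1)$ and Frobenius norm is $O(1/n)$, $\nucnorm{\Delta S}=O(1/n)$.

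For $\Delta\Pi=\phi'\phi'^\top-\phi\phi^\top$, we have $\nucnorm{\Delta\Pi}\le 2\|\phi'-\phi\|_2$. Write
\[
  \phi'-\phi=\frac{D'^{1/2}\one-D^{1/2}\one}{\sqrt{2|E'|}}+D^{1/2}\one\Bigl(\frac{1}{\sqrt{2|E'|}}-\frac{1}{\sqrt{2|E|}}\Bigr).
\]
The first vector has two entries of size $O(\avdeg^{-1/2}/\sqrt{n\avdeg})=O(1/n)$. The second has norm $\sqrt{2|E|}\cdot O(|E|^{-3/2})=O(1/|E|)=O(n^{-2})$. Hence $\nucnorm{E}=O(1/n)$.

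\textbf{Step 2: show $\opnorm{B'}=O(\varepsilon_n)$.}
This follows from $\opnorm{B'}\le\opnorm{B}+\opnorm{E}\le C\varepsilon_n+\nucnorm{E}=O(\varepsilon_n)$. In particular $\opnorm{B'}<1$ for large $n$, so the expansion~\eqref{eq:K-expansion} is valid for $G'$ too.

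\textbf{Step 3: combine.}
We get
\[
  \sum_{r\ge4}|\Delta\tr(B^r)|\le \nucnorm{E}\sum_{r\ge4}r\beta^{r-1}=O\!\left(\frac1n\cdot\frac{(\log n)^{3/2}}{n^{3/2}}\right),
\]
which is the bound in~\eqref{eq:tail}. All constants are deterministic and uniform in $uv$, because they come only from Lemma~\ref{lem:event}.

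\textbf{Main obstacle.}
The delicate point is the nuclear-norm estimate for $\Delta S$. Entrywise, the row-$u$ perturbation looks like $\avdeg$ entries of size $\avdeg^{-2}$, so a naive entrywise $\ell^1$ bound would give $O(1/n)$ per row anyway. One must still make sure not to lose a factor $\sqrt n$, which is why rank plus Frobenius norm is the right tool. The term $\Delta\Pi$ also needs care: the normalisation change in $2|E'|$ must not contribute at order $1/\sqrt n$, and the bound $O(1/|E|)$ above shows that it does not.
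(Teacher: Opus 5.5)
Your proposal is correct and follows essentially the same route as the paper. You bound $\nucnorm{B'-B}=O(n^{-1})$ via $\nucnorm{S'-S}$ and $2\lVert\phi'-\phi\rVert_2$, deduce $\opnorm{B'}=O(\varepsilon_n)$, and then combine the telescoping identity, trace duality, and the tail estimate $\sum_{r\ge4}r\rho^{r-1}=O(\rho^3)$. The only cosmetic difference is how you bound $\nucnorm{S'-S}$: you split off the new-edge term and the row/column rescalings and use rank plus Frobenius norm, which even gives $O(n^{-3/2})$ for the rescaling part. The paper instead writes $S'=\Gamma S\Gamma+Z$ and uses $\nucnorm{(\Gamma-I)S\Gamma}\le\nucnorm{\Gamma-I}\opnorm{S}\opnorm{\Gamma}$.
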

In order to prove~\Cref{lem:dense-higher-tail}, it is crucial to upper bound the norm of the difference between $S$, $\phi$, and $\Pi$ and their counterparts $S'$, $\phi'$, and $\Pi'$ associated with the perturbed graph $G'$. We do so in~\Cref{lem_1523_1008_B},~\Cref{lem:1813_1607}, and~\Cref{lem_1524_1008_c}, respectively.
We start by defining a diagonal matrix $\Gamma$ by
\[
 \Gamma_{ii} :=
  \begin{cases}
    \sqrt{d_i/(d_i+1)} & \text{ if } i\in\{u,v\},\\
    1 & \text{ if } i\notin\{u,v\}.
  \end{cases}
\]

\begin{lemma}\label[lemma]{lem:S'}
The following equality holds:
\begin{equation}\label{eq:S-perturbation}
  S'=\Gamma S\Gamma
     +\frac{e_ue_v^{\top}+e_ve_u^{\top}}
            {\sqrt{(d_u+1)(d_v+1)}}.
\end{equation}
\end{lemma}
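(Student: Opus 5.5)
The identity is purely algebraic, so I will verify it entrywise by comparing the $(i,j)$ entries of both sides of \eqref{eq:S-perturbation}. Recall that $S_{ij}=A_{ij}/\sqrt{d_id_j}$, and that $S'_{ij}=A'_{ij}/\sqrt{d'_id'_j}$ with $A'=A+e_ue_v^{\top}+e_ve_u^{\top}$ and $d'_i=d_i+\mathbf{1}_{\{i\in\{u,v\}\}}$. Since $\Gamma$ is diagonal, we have $(\Gamma S\Gamma)_{ij}=\Gamma_{ii}\Gamma_{jj}S_{ij}$.

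The key observation is that $\Gamma_{ii}=\sqrt{d_i/d'_i}$ for every vertex $i$: for $i\notin\{u,v\}$ this is $1$ because $d'_i=d_i$. Hence
\[
(\Gamma S\Gamma)_{ij}=\sqrt{\tfrac{d_i}{d'_i}}\sqrt{\tfrac{d_j}{d'_j}}\,\frac{A_{ij}}{\sqrt{d_id_j}}=\frac{A_{ij}}{\sqrt{d'_id'_j}}.
\]
Since $A'_{ij}=A_{ij}$ for $\{i,j\}\neq\{u,v\}$, this already equals $S'_{ij}$ for those pairs. The rank-two correction vanishes on such pairs, because $e_ue_v^{\top}+e_ve_u^{\top}$ is supported only on the entries $(u,v)$ and $(v,u)$; the case $i=j$ is covered as well, since $u\neq v$.

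For $\{i,j\}=\{u,v\}$, the fact that $uv$ is a nonedge of $G$ gives $A_{uv}=0$, so $(\Gamma S\Gamma)_{uv}=0$. On the other hand, $S'_{uv}=1/\sqrt{(d_u+1)(d_v+1)}$, which is exactly the $(u,v)$ entry of the correction term. The $(v,u)$ entry follows by symmetry.

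There is no real obstacle here. The only point requiring care is the uniform description $\Gamma_{ii}=\sqrt{d_i/d'_i}$, which lets the diagonal rescaling absorb the degree changes at $u$ and $v$ for all old edges at once, including the edges between $u$ (or $v$) and its neighbours, where only one endpoint's degree changes.
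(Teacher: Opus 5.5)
Your proof is correct and follows essentially the same route as the paper: an entrywise comparison of $S'$ with $\Gamma S\Gamma$, where the only mismatch is at the new entries $(u,v)$ and $(v,u)$, and the rank-two correction supplies exactly those. Your uniform description $\Gamma_{ii}=\sqrt{d_i/d'_i}$ collapses the paper's four-case listing of the entries of $S'$ into a single computation.
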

\begin{proof}
    For two vertices $i,j\in V(G)$, we find
    \begin{align*}
        S'_{i,j}
        &=
        \left\{
        \begin{array}{cll}
             S_{ij}&\mbox{ if }i,j\not\in\{u,v\}\mbox{ or }i=j,\\
             \frac{1}{\sqrt{(d_i+1)(d_j+1)}}&\mbox{ if }\{i,j\}=\{u,v\},\\ 
             \sqrt{\frac{d_i}{d_i+1}}\,S_{ij}&\mbox{ if }i\in\{u,v\}, j\not\in\{u,v\},\\ 
             \sqrt{\frac{d_j}{d_j+1}}\,S_{ij}&\mbox{ if }i\not\in\{u,v\}, j\in\{u,v\}.
        \end{array}
        \right.
    \end{align*}
    Then, a direct comparison shows that
$S'_{ij}=(\Gamma S\Gamma)_{ij}$ for every ordered pair
$(i,j)\notin\{(u,v),(v,u)\}$. The only missing entries are those
corresponding to the newly added edge $uv$.
    Adding the extra correction term $\frac{e_ue_v^{\top}+e_ve_u^{\top}}
            {\sqrt{(d_u+1)(d_v+1)}}$ gives the required identity~\eqref{eq:S-perturbation}.
\end{proof}

We first address the normalised adjacency matrix $S$.
If $A \in {\mathbb R}^{m \times n}$ is a matrix, then $\nucnorm{A}$ denotes the \emph{nuclear norm} of $A$ 
(see Glossary~\ref{sect:nuclear}).

\begin{lemma}
\label[lemma]{lem_1523_1008_B}
   $\nucnorm{S'-S}=O(n^{-1})$.
\end{lemma}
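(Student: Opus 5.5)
We start from the decomposition of \Cref{lem:S'}:
\[
  S'-S=(\Gamma S\Gamma-S)+\frac{e_ue_v^{\top}+e_ve_u^{\top}}{\sqrt{(d_u+1)(d_v+1)}}.
\]
We bound the nuclear norm of each piece separately and combine them with the triangle inequality. The second piece is a symmetric rank-two matrix with eigenvalues $\pm 1/\sqrt{(d_u+1)(d_v+1)}$. Hence its nuclear norm is $2/\sqrt{(d_u+1)(d_v+1)}$. On $\mathcal E_{n,p}$, \Cref{lem:event}(i) gives $d_u,d_v=\avdeg(1+O(\varepsilon_n))$ with $\avdeg\ge p_0(n-1)$, so this norm is $O(1/\avdeg)=O(n^{-1})$.

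For the first piece we write $\Gamma=I+\Lambda$, where $\Lambda$ is diagonal and supported on $\{u,v\}$, with entries $\gamma_i-1$ and $\gamma_i=\sqrt{d_i/(d_i+1)}$. Then
\[
  \Gamma S\Gamma-S=\Lambda S+S\Lambda+\Lambda S\Lambda.
\]
The diagonal entries are $|\gamma_i-1|=1-\sqrt{1-1/(d_i+1)}\le 1/(d_i+1)=O(n^{-1})$. Each of the three terms has rank at most $2$, since $\Lambda$ has rank at most $2$. For a matrix of rank at most $k$ we have $\nucnorm{M}\le k\opnorm{M}$. Using $\opnorm{S}\le 1$ (the eigenvalues of $S$ lie in $[-1,1]$), we get
\[
  \opnorm{\Lambda S}\le\opnorm{\Lambda}\,\opnorm{S}=O(n^{-1}),
\]
and likewise for $S\Lambda$ and $\Lambda S\Lambda$. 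So each term has nuclear norm $O(n^{-1})$, and $\nucnorm{\Gamma S\Gamma-S}=O(n^{-1})$.

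An alternative that avoids the rank argument is the inequality $\nucnorm{XY}\le\nucnorm{X}\opnorm{Y}$. It gives $\nucnorm{\Lambda S}\le\nucnorm{\Lambda}\opnorm{S}\le|\gamma_u-1|+|\gamma_v-1|$, which is again $O(n^{-1})$. Either route is elementary.

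There is no real obstacle here. The only point needing care is uniformity: the hidden constant must not depend on $u,v$ or on $p\in[p_0,1]$. This holds because the only inputs are the uniform degree bound $d_i\ge\avdeg(1-C\varepsilon_n)\ge p_0(n-1)/2$ for large $n$, from \Cref{lem:event}(i), and $\opnorm{S}\le1$. Both hold simultaneously for all nonedges on $\mathcal E_{n,p}$. In fact the bound only requires the minimum degree to be of order $n$.
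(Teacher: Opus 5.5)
Your proof is correct and takes essentially the same approach as the paper. Both arguments use the decomposition $S'=\Gamma S\Gamma+Z$ from \Cref{lem:S'}, the identity $\nucnorm{Z}=2/\sqrt{(d_u+1)(d_v+1)}=O(n^{-1})$, and control of $\Gamma S\Gamma-S$ through the rank-two matrix $\Gamma-I$, which has operator norm $O(n^{-1})$, together with $\opnorm{S}=1$. The paper writes $\Gamma S\Gamma-S=(\Gamma-I)S\Gamma+S(\Gamma-I)$, applies the rank bound to $\Gamma-I$ itself, and then uses $\nucnorm{XYZ}\le\opnorm{X}\nucnorm{Y}\opnorm{Z}$; this is exactly your ``alternative'' route, apart from your three-term rather than two-term expansion.
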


\begin{proof}
First, note that 
\begin{align*}
    \opnorm{\Gamma-I}=
    \max\left(1-\sqrt{\frac{d_u}{d_u+1}},1-\sqrt{\frac{d_v}{d_v+1}}\right).
\end{align*}
Recall that we are working in the event $\mathcal E_{n,p}$ of~\Cref{lem:event}. Applying part (i) of the lemma yields,
for $i\in\{u,v\}$,
\begin{align*}
    1-\sqrt{\frac{d_i}{d_i+1}}
 =
\frac{1-\frac{d_i}{d_i+1}}
     {1+\sqrt{\frac{d_i}{d_i+1}}}
= \frac{1}{(d_i+1)\left(1+\sqrt{d_i/(d_i+1)}\right)}
=
O(d_i^{-1})
=
O(\avdeg^{-1})
=
O(n^{-1}).
\end{align*}
It follows that $\opnorm{\Gamma-I}=O(n^{-1})$.
Observing that $\Gamma-I$ has rank 
two, it then follows (see Glossary~\ref{sect:nuclear}) that
\begin{align}
\label{eqn_1711_1607}
    \nucnorm{\Gamma-I}=O(n^{-1}).
\end{align}

Next, define $a := \frac{1}{\sqrt{(d_u+1)(d_v+1)}}$ 
and $Z\coloneqq a (e_ue_v^{\top}+e_ve_u^{\top})$, so that Lemma~\ref{lem:S'} gives  \begin{align}
S'=\Gamma S\Gamma+Z.
\label{eq:S'}
\end{align}
Note that $Z$ has eigenvalues $a$ and $-a$ with multiplicity 1, and that all other eigenvalues are $0$.
Using again part (i) of~\Cref{lem:event},
we compute
\begin{align}
\label{eqn_1712_1607}
    \nucnorm{Z}
    = {a + a}
    =
    \frac{2}{\sqrt{(d_u+1)(d_v+1)}}
    =O(\avdeg^{-1})
    =
    O(n^{-1}).
\end{align}
Observe that 
\begin{align}
  \Gamma S\Gamma-S=(\Gamma-I)S\Gamma+S(\Gamma-I). \label{eq:GammaSGamma}
\end{align}
Note also that the nuclear norm satisfies the mixed submultiplicative inequalities $\nucnorm{MN}\leq\opnorm{M}\nucnorm{N}$ and $\nucnorm{MN}\leq\nucnorm{M}\opnorm{N}$ (as special cases of~\cref{eqn_1608_2109}) as well as the triangle inequality $\nucnorm{M+N}\leq\nucnorm{M}+\nucnorm{N}$~\eqref{eq:nuclear-triangle}. 
Using~\eqref{eq:S'}  {and~\eqref{eq:GammaSGamma}}, 
we deduce that
\begin{align}
\label{eqn_27_09_1124}
\notag
    \nucnorm{S'-S}
    =
    \nucnorm{\Gamma S\Gamma-S+Z}
    & =
    \nucnorm{(\Gamma-I)S\Gamma+S(\Gamma-I)+Z} \\
    \notag
    &\leq
    \nucnorm{(\Gamma-I)S\Gamma}+\nucnorm{S(\Gamma-I)}+\nucnorm{Z}\\
    &\leq
    \nucnorm{\Gamma-I}\opnorm{S}\opnorm{\Gamma}+\opnorm{S}\nucnorm{\Gamma-I}+\nucnorm{Z}.
\end{align}
Note now that $\opnorm{S}=1$ since the spectrum of $S$ is the same as the spectrum of $T$ (recall the discussion in~\Cref{sec_spectral_expansion}). Also, clearly, $\opnorm{\Gamma}=1$.
Combining this with~\eqref{eqn_1711_1607} and~\eqref{eqn_1712_1607}, we deduce that
\begin{align*}
    \nucnorm{S'-S}
    &=O(n^{-1})+ O(n^{-1})+O(n^{-1})=O(n^{-1}),
\end{align*}
as required.
\end{proof}

Next, we turn to the vectors $\phi:=\frac{D^{1/2}\one}{\sqrt{2|E(G)|}}$ and $\phi':=\frac{(D')^{1/2}\one}{\sqrt{2|E(G')|}}$.

\begin{lemma}
\label[lemma]{lem:1813_1607}
    $\lVert\phi'-\phi\rVert_2=O(n^{-3/2})$.
\end{lemma}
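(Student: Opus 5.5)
We compute $\phi'-\phi$ coordinatewise and split the difference into a normalisation change and a local change at $u$ and $v$. Write $m:=|E(G)|$, so that $|E(G')|=m+1$, and decompose
\[
  \phi'-\phi
  =\frac{(D')^{1/2}\one-D^{1/2}\one}{\sqrt{2(m+1)}}
  +\left(\frac{1}{\sqrt{2(m+1)}}-\frac{1}{\sqrt{2m}}\right)D^{1/2}\one .
\]
We then bound the two summands separately in Euclidean norm and apply the triangle inequality. Throughout, we work on the event $\mathcal E_{n,p}$ of Lemma~\ref{lem:event}. By part (i) of that lemma, all degrees are $\avdeg(1+O(\varepsilon_n))=\Theta(n)$, since $p\ge p_0$. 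Consequently $2m=\sum_w d_w=\Theta(n^2)$.

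\emph{First summand.} The vector $(D')^{1/2}\one-D^{1/2}\one$ is supported on $\{u,v\}$. Its entries are $\sqrt{d_i+1}-\sqrt{d_i}=1/(\sqrt{d_i+1}+\sqrt{d_i})=O(n^{-1/2})$ for $i\in\{u,v\}$, so its Euclidean norm is $O(n^{-1/2})$. Dividing by $\sqrt{2(m+1)}=\Theta(n)$ gives a contribution of $O(n^{-3/2})$.

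\emph{Second summand.} Here $\lVert D^{1/2}\one\rVert_2=\sqrt{\sum_w d_w}=\sqrt{2m}$. Hence the norm of the second summand equals
\[
  \sqrt{2m}\left(\frac{1}{\sqrt{2m}}-\frac{1}{\sqrt{2(m+1)}}\right)
  =1-\sqrt{\frac{m}{m+1}}
  =\frac{1/(m+1)}{1+\sqrt{m/(m+1)}}
  \le\frac{1}{m+1}=O(n^{-2}),
\]
which is even smaller than required. Adding the two contributions gives $\lVert\phi'-\phi\rVert_2=O(n^{-3/2})$. Since only the uniform degree bound from Lemma~\ref{lem:event}(i) is used, the constant is uniform over all nonedges $uv$.

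There is no genuine obstacle here. The only point needing care is to compute the normalisation term exactly through $\lVert D^{1/2}\one\rVert_2=\sqrt{2m}$, rather than bounding it crudely. The dominant $n^{-3/2}$ term comes from the two perturbed coordinates, and the explicit computation shows that the rescaling of the other $n-2$ coordinates, although it touches every entry, contributes only $O(n^{-2})$.
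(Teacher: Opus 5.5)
Your proposal is correct and follows essentially the same route as the paper: the same split of $\phi'-\phi$ into the local change $(\mathbf{w}'-\mathbf{w})/\sqrt{q+2}$ supported on $\{u,v\}$ plus the normalisation change, with the same bounds $O(n^{-3/2})$ and $1-\sqrt{q/(q+2)}=O(n^{-2})$. Your exact bound $\le 1/(m+1)$ for the second term is a slightly more explicit version of the paper's $O(q^{-1})$ step.
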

\begin{proof}
Let $q := 2|E(G)| = \sum_{i \in V(G)} d_i$  
and $q':= 2|E(G')| = \sum_{i \in V(G')} d_i'$. Then we have $q'=q+2$, because $d'_i = d_i + \mathbf{1}_{\{i \in \{u,v\}\}}$.
Using~\Cref{lem:event}, we have $d_i=\Theta(n)$ uniformly in $i$, and consequently
$q=\Theta(n^2)$.
Set $\bold{w} := D^{1/2}\mathbf{1}$
    and
    $\bold{w}' := (D')^{1/2}\mathbf{1}$.
Since $\phi=\frac{1}{\sqrt{q}} \bold{w}$
and     $\phi'=\frac{1}{\sqrt{q+2}}\bold{w}'$,
we add and subtract
$\frac{1}{\sqrt{q+2}}\bold{w}$ to obtain
\begin{align}
    \phi'-\phi
    =
    \frac{1}{\sqrt{q+2}}\bold{w}'-\frac{1}{\sqrt{q}}\bold{w}\ & =
    \frac{1}{\sqrt{q+2}}\bold{w}'
    -\frac{1}{\sqrt{q+2}}\bold{w}
    +\frac{1}{\sqrt{q+2}}\bold{w}
    -\frac{1}{\sqrt{q}}\bold{w} \nonumber \\
    &=
    \frac{1}{\sqrt{q+2}}\left(\bold{w}'-\bold{w}\right)
    +
    \left(
        \frac{1}{\sqrt{q+2}}-\frac{1}{\sqrt{q}}
    \right)\bold{w}. \label{eq:delta-phi}
\end{align}
The vector $\bold{w}'-\bold{w}$ is supported only on $u$ and $v$, and
\begin{align*}
    \lVert \bold{w}'-\bold{w}\rVert_2^2
    &=
    \sum_{i\in\{u,v\}}
    \left(\sqrt{d_i+1}-\sqrt{d_i}\right)^2
    =
    \sum_{i\in\{u,v\}}
    \frac{1}{\left(\sqrt{d_i+1}+\sqrt{d_i}\right)^2}
    =
    O(n^{-1}),
\end{align*}
because $d_u,d_v=\Theta(n)$. Hence,
$\lVert \bold{w}'-\bold{w}\rVert_2=O(n^{-1/2})$. Since
$q=
\Theta(n^2)$, the {Euclidean} norm of the first term of~\eqref{eq:delta-phi}
is bounded as follows: 
\[
    \frac{\lVert \bold{w}'-\bold{w}\rVert_2}{\sqrt{q+2}}
    =
    O(n^{-3/2}).
\]

For the second term of~\eqref{eq:delta-phi}, using $\lVert \bold{w}\rVert_2=\sqrt{q}$, we obtain
\[
    \left|
        \frac{1}{\sqrt{q+2}}-\frac{1}{\sqrt{q}}
    \right|
    \lVert \bold{w}\rVert_2
    =
    1-\sqrt{\frac{q}{q+2}}
    =
    O(q^{-1})
    =
    O(n^{-2}).
\]
Therefore,
as required, 
    $\lVert\phi'-\phi\rVert_2
    \leq
    O(n^{-3/2})+O(n^{-2})
    =
    O(n^{-3/2})$.
\end{proof}

We finally turn to $\Pi:=\phi\phi^{\top}$ and $\Pi':=\phi'(\phi')^{\top}$.

\begin{lemma}
\label[lemma]{lem_1524_1008_c}
   $\nucnorm{\Pi'-\Pi}=O(n^{-3/2})$. 
\end{lemma}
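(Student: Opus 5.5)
We write the difference of the two rank-one projections as a sum of rank-one terms and bound each one using Lemma~\ref{lem:1813_1607}. Set $\delta:=\phi'-\phi$. Then
\[
  \Pi'-\Pi=(\phi+\delta)(\phi+\delta)^{\top}-\phi\phi^{\top}
  =\delta\phi^{\top}+\phi\delta^{\top}+\delta\delta^{\top}.
\]
Each summand is a rank-one matrix of the form $xy^{\top}$. The nuclear norm of such a matrix equals its single singular value, namely $\lVert x\rVert_2\lVert y\rVert_2$ (see Glossary~\ref{sect:nuclear}).

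By the triangle inequality for the nuclear norm~\eqref{eq:nuclear-triangle}, and using $\lVert\phi\rVert_2=1$ (computed in Section~\ref{sec_spectral_expansion}), we obtain
\[
  \nucnorm{\Pi'-\Pi}\le 2\lVert\delta\rVert_2\lVert\phi\rVert_2+\lVert\delta\rVert_2^2
  =2\lVert\delta\rVert_2+\lVert\delta\rVert_2^2 .
\]
Lemma~\ref{lem:1813_1607}, applied on the event $\mathcal E_{n,p}$, gives $\lVert\delta\rVert_2=O(n^{-3/2})$. Substituting, the right-hand side is $O(n^{-3/2})+O(n^{-3})=O(n^{-3/2})$. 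The implied constants are uniform over nonedges $uv$, because the constants in Lemma~\ref{lem:1813_1607} are.

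There is no real obstacle here. The only fact to check is that the nuclear norm of $xy^{\top}$ is $\lVert x\rVert_2\lVert y\rVert_2$: the matrix $(xy^{\top})^{\top}(xy^{\top})=\lVert x\rVert_2^2\,yy^{\top}$ has a single nonzero eigenvalue $\lVert x\rVert_2^2\lVert y\rVert_2^2$, whose square root is the unique singular value.

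If one prefers to avoid that computation, an alternative also works. The matrix $\Pi'-\Pi$ is symmetric of rank at most $2$, so $\nucnorm{\Pi'-\Pi}\le 2\opnorm{\Pi'-\Pi}$. The operator norm is in turn at most $2\lVert\delta\rVert_2+\lVert\delta\rVert_2^2$, which gives the same bound.
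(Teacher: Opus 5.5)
Your proof is correct and is essentially the paper's argument: split $\Pi'-\Pi$ into rank-one pieces, apply $\nucnorm{\bold{x}\bold{y}^\top}=\lVert\bold{x}\rVert_2\lVert\bold{y}\rVert_2$ with the triangle inequality, and conclude from Lemma~\ref{lem:1813_1607}. The only difference is that the paper uses the two-term split $(\phi'-\phi)\phi'^{\top}+\phi(\phi'-\phi)^{\top}$ together with $\lVert\phi'\rVert_2=\lVert\phi\rVert_2=1$, which gives $2\lVert\phi'-\phi\rVert_2$ directly, whereas your three-term expansion leaves an extra, harmless $\lVert\delta\rVert_2^2$ term.
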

\begin{proof}
    We find
    \begin{align*}
        \nucnorm{\Pi'-\Pi}
        &=
        \nucnorm{\phi'\phi'^\top-\phi\phi^\top}
        =
        \nucnorm{(\phi'-\phi)\phi'^\top+\phi(\phi'-\phi)^\top}
        \leq
        \nucnorm{(\phi'-\phi)\phi'^\top}+\nucnorm{\phi(\phi'-\phi)^\top}.
    \end{align*}
Since $\nucnorm{\bold{x}\bold{y}^\top}=\|\bold{x}\|_2\|\bold{y}\|_2$ for any two vectors $\bold{x},\bold{y}$  (see~\eqref{eq:nucl-eucl} in Glossary~\ref{glo:op}), we deduce that
    \begin{align*}
        \nucnorm{\Pi'-\Pi} 
        &\leq
        \|\phi'-\phi\|_2\left(\|\phi'\|_2+\|\phi\|_2\right).
        \end{align*}
        Using that $\|\phi\|_2=\|\phi'\|_2=1$, the result directly follows from~\Cref{lem:1813_1607}. 
\end{proof}

We now have all ingredients to prove the main result of this section.

\begin{proof}[Proof of~\Cref{lem:dense-higher-tail}]
    Let $W\coloneqq B'-B$. Then 
    \begin{align}
    \label{eq:E-nuclear}
        \nucnorm{W}
        &=
        \nucnorm{S'-\Pi'-S+\Pi}
        \leq\nucnorm{S'-S}+\nucnorm{\Pi'-\Pi}
        =
        O(n^{-1})+O(n^{-3/2})
        =
        O(n^{-1}).
    \end{align}

    Combining this with part (iv) of~\Cref{lem:event} and using that the operator norm is dominated by the nuclear norm (see Glossary~\ref{glo:op}), we deduce that
    \begin{align*}
        \opnorm{B'}
        =
        \opnorm{W+B}
        \leq\opnorm{W}+\opnorm{B}
        \leq \nucnorm{W}+\opnorm{B}
        =
        O(n^{-1})+O(\varepsilon_n)=O(\varepsilon_n),
    \end{align*}
    so
    \[
  \rho:=\max\{\opnorm{B},\opnorm{B'}\}=O(\varepsilon_n).
\]

For $r\geq 1$, the telescoping identity
\begin{align}
\label{eqn_1906_1607}
    (B')^r-B^r
    =
    \sum_{j=0}^{r-1}
        (B')^j W B^{r-1-j}
\end{align}
holds. Indeed, expanding the right-hand side gives
\[
\begin{aligned}
    \sum_{j=0}^{r-1}(B')^jWB^{r-1-j}
    &=
    \sum_{j=0}^{r-1}
    \left(
        (B')^{j+1}B^{r-1-j}
        -
        (B')^jB^{r-j}
    \right) \\
    &=
    (B')^r-B^r,
\end{aligned}
\]
since all the intermediate terms cancel.
We then obtain
\begin{align}
\label{eqn_2026_27_09}
\notag
    \left|
    \operatorname{tr}\bigl((B')^r-B^r\bigr)
    \right|
    &\leq
    \sum_{j=0}^{r-1}
    \left|
        \operatorname{tr}
        \bigl((B')^jWB^{r-1-j}\bigr)
    \right| && \text{\eqref{eqn_1906_1607}} \\\notag
    &=
    \sum_{j=0}^{r-1}
    \left|
        \operatorname{tr}
        \bigl(B^{r-1-j}(B')^jW\bigr)
    \right| && \text{(cyclicity of the trace)} \\\notag
    &\leq
    \lVert W\rVert_*
    \sum_{j=0}^{r-1}
    \left\|
        B^{r-1-j}(B')^j
    \right\|_{\mathrm{op}} && 
    \text{\eqref{eq:trace-duality}} \\\notag
    &\leq
    \lVert W\rVert_*
    \sum_{j=0}^{r-1}
        \lVert B\rVert_{\mathrm{op}}^{r-1-j}
        \lVert B'\rVert_{\mathrm{op}}^j && \text{(submultiplicativity of the operator norm)} \\
    &\leq
    r\lVert W\rVert_*\rho^{r-1} && \text{(definition of $\rho$).}
\end{align}
Therefore,
\begin{align*}
    \sum_{r\geq 4}
    \left|
        \Delta\operatorname{tr}(B^r)
    \right|
&=
\sum_{r\geq 4}
    \left|\tr((B')^r-B^r)\right|
    \leq
    \lVert W\rVert_*
    \sum_{r\geq 4}r\rho^{r-1}. 
    \end{align*}
    Observe now that $\sum_{r\geq 4}r\rho^{r-1}$ is the derivative of a tail of the geometric series.
    Since $\rho=O(\varepsilon_n)=o(1)$, we have $|\rho|<1$ for all
sufficiently large $n$, and thus
\begin{align}
\label{eqn_2033_27_09}
    \sum_{r \geq 4} r \rho^{r-1} = \frac{d}{d\rho} \sum_{r \geq 4} \rho^{r} = \frac{d}{d\rho} \left( \frac{\rho^4}{1-\rho} \right) = \frac{4\rho^3(1-\rho) + \rho^4}{(1-\rho)^2} = \frac{\rho^3 (4 - 3\rho)}{(1-\rho)^2}.
\end{align}    
    Hence,
\begin{align*} 
\sum_{r\geq 4}
    \left|
        \Delta\operatorname{tr}(B^r)
    \right|
&\leq
    \lVert W\rVert_*
    \frac{\rho^3(4-3\rho)}{(1-\rho)^2}
=
    O\bigl(\lVert W\rVert_*\rho^3\bigr) 
    =
    O\left(\frac{\varepsilon_n^3}{n}\right) 
    =
    O\left(\frac{(\log n)^{3/2}}{n^{5/2}}\right),
\end{align*}
and the proof is concluded.
\end{proof}

\subsection{Proving~\Cref{thm:dense-exclusion}}

We can now prove that whp the Kemeny--Braess paradox does not occur in the dense Erd\H{o}s--R\'enyi regime.

\begin{proof}[Proof of~\Cref{thm:dense-exclusion}]
We shall assume that the event $\mathcal E_{n,p}$ of~\Cref{lem:event} holds. By part (iv) of the lemma,
we have that $G$ is connected and
$\opnorm{B}=O(\varepsilon_n)<1$ for all sufficiently large $n$.
In particular, $G$ is nonbipartite  {(see~\eqref{eq:non-bip} in Glossary~\ref{glo:op})}; clearly, the same holds for the graph $G+uv$ obtained from $G$ by adding a nonedge $uv$. 
Consequently, the nonprincipal
normalised-adjacency eigenvalues of both graphs have modulus less than one.
Hence, the expansion
\eqref{eq:K-expansion} is valid for both graphs.
Applying \eqref{eq:K-expansion} to $G+uv$ and to $G$, and subtracting
the resulting 
expansions, we obtain
\[
    \K(G+uv)-\K(G)
    =
    \sum_{r\geq2}\Delta\tr(B^r).
\]
By~\eqref{eq:B-S-moments}, we have 
$\Delta\tr(B^r)=\Delta\tr(S^r)$ for every $r\geq1$.

All estimates used below hold on the event $\mathcal E_{n,p}$
with deterministic constants independent of the vertices involved.
Since the subsequent calculations are deterministic, their implied
constants are likewise independent of the chosen nonedge $uv$.
Equations \eqref{eq:delta2},
\eqref{eq:delta3}, and \eqref{eq:tail} give, uniformly over all nonedges,
\[
  \K(G+uv)-\K(G)
  =-\frac{2}{p^2n^2}
   +O\!\left(\frac{(\log n)^{3/2}}{n^{5/2}}\right).
\]
Hence, the change in Kemeny's constant
is negative for every nonedge when $n$ is large enough.  This proves part~(i) of~\Cref{thm:dense-exclusion}.

We now prove part~(ii), continuing to work on the event $\mathcal E_{n,p}$.  Fix an edge $e=uv$ of $G$ and put
$H=G-e$. We first verify that $H$ satisfies conditions (i)–(iv) of~\Cref{lem:event} with $C$ replaced by a possibly larger constant $C'$, depending only on $p_0$ and independent of the deleted edge $e$. 

Deleting $e$ from $G$ changes each degree by at most one. Since
$\avdeg=p(n-1)=\Theta(n)$ and $\avdeg\varepsilon_n=\Theta(\sqrt{n\log n})$,
the degree (in $H$) of each vertex $v$ satisfies
\[
  d_H(v)=\avdeg\bigl(1+O(\varepsilon_n)\bigr)
\]
with an implied constant independent of the edge $e$. 
Similarly, the common neighbourhood (in $H$) of every pair of two distinct vertices $u,v$ satisfies
\[
  |N_H(u)\cap N_H(v)|
  =(n-2)p^2\bigl(1+O(\varepsilon_n)\bigr).
\]

Let $t_G(v)$ and $t_H(v)$ denote the numbers of triangles containing
$v$ in the two graphs $G$ and $H$, respectively. If $i\notin\{u,v\}$, deleting $uv$ destroys at
most one triangle containing $i$. If $i\in\{u,v\}$, it destroys exactly
$|N_G(u)\cap N_G(v)|=O(n)$ such triangles. Since
$\binom{n-1}{2}p^3=\Theta(n^2)$, it follows that, uniformly in $v$ and
$e$,
\[
  t_H(v)=\binom{n-1}{2}p^3\bigl(1+O(\varepsilon_n)\bigr).
\]
Thus, the first three facts of Lemma~\ref{lem:event} are stable under the
deletion of an arbitrary edge.

It remains to check (iv). Associate the matrices 
$S_X$,
$\Pi_X$, and $B_X$ from Section~\ref{sec_spectral_expansion} with a
graph $X$.
Note that the proofs of~\Cref{lem_1523_1008_B},~\Cref{lem:1813_1607}, and~\Cref{lem_1524_1008_c}
only use that all degrees are $\Theta(n)$, 
a property that, as we have just established, also holds for $H$.
Hence, we can apply exactly the same arguments to the extension
$H+e=G$ 
to find that
$\nucnorm{S_G-S_H}=O(n^{-1})$ and 
$\nucnorm{\Pi_G-\Pi_H}=O(n^{-3/2})$
uniformly over $e$.
Consequently,
\begin{align*}
  \opnorm{B_H}
  &\le \opnorm{B_G}+\opnorm{B_H-B_G}\\
  &\le \opnorm{B_G}
      +\opnorm{S_H-S_G}
      +\opnorm{\Pi_H-\Pi_G} \\
  &\le \opnorm{B_G}
      +\nucnorm{S_H-S_G}
      +\nucnorm{\Pi_H-\Pi_G}\\
  &=O(\varepsilon_n).
\end{align*}
In particular, for all sufficiently large $n$ we have $\opnorm{B_H}<1$, which means that $H$ is connected and non-bipartite. 

The proofs of \eqref{eq:delta2}, \eqref{eq:delta3}, and
\eqref{eq:tail} use only that $G$ satisfies the four conditions of~\Cref{lem:event}. We may therefore repeat them with $H$ as the base graph and
with its missing edge $e$ added back. Since $H+e=G$, we deduce that
\[
  \K(G)-\K(G-e)
  =-\frac{2}{p^2n^2}
   +O\!\left(\frac{(\log n)^{3/2}}{n^{5/2}}\right),
\]
where, once again, the constant in the asymptotic notation is independent of $e$. 
We deduce that
$\K(G-e)>\K(G)$ for every edge $e$ when $n$ is sufficiently large on
the event of \Cref{lem:event}.  All constants and all probability estimates
used above depend only on $p_0$, so the conclusion is uniform over
$p\in[p_0,1]$.  Since the event has probability tending to one, the proof
is complete.
\end{proof}

\section{The two threshold theorems}
\label{sec:logarithmic-sparse-regime}
The uniform estimates of~\Cref{sec:dense-uniform-estimates} are enough to prevent the occurrence of Kemeny--Braess paradox in the dense regime, thus showing that the Kemeny analogues of {Conjecture}~\ref{conj_chung} and \Cref{thm_eldan} fail. However, in order to obtain our main results~\Cref{thm:edge-threshold} and~\Cref{thm:nonedge-threshold} covering the whole phase diagram (see~\Cref{fig:braess-phases})
we need to go beyond the dense regime.
In particular, as a first step, we need a slight modification of the results in~\Cref{sec_higher_order_tail} to capture the case when the parameter $p=p(n)$ may tend to zero as $n$ grows to infinity. This is done in the next subsection.

\subsection{A sparse edge-perturbation formula}
As before, for a graph $X$ and a vertex $v\in V(X)$, let $t_X(v)$ denote the number of triangles containing
$v$ in $X$. For $d>0$ and a vertex $v$ of positive degree, we define the
\emph{harmonic neighbour statistic} by
\begin{equation}\label{eq:sparse-harmonic-statistic}
  h_{X,d}(v)
  :=
  \frac{d}{d_X(v)}
  \sum_{w\sim_X v}\frac1{d_X(w)}.
\end{equation}
Here $d$ is a deterministic scale parameter; in the random-graph
applications below, we take $d=\avdeg$, the expected degree of the ambient
random graph.\footnote{We denote the graph by $X$ instead of $G$ here to stress that it is now viewed as a deterministic graph.} We write $h_X(v)$ when $d$ is clear from the context.

\begin{definition}
\label[definition]{def:kemeny-tame}
For each $n\in \mathbb{N}$, let $\mathcal F_n$ be a family of triples $(X,u,v)$,
where $X$ is a connected graph on $n$ vertices and $uv$ is a nonedge
of $X$. Assume $d=d(n)$ tends to $\infty$ as $n\to \infty$.
We say that $\mathcal F=(\mathcal F_n)_n$ is
\emph{Kemeny-tame at scale $d$} if there exists a constant $c>0$
such that the following conditions hold uniformly over
$(X,u,v)\in\mathcal F_n$:
\begin{enumerate}
  \item[$(i)$]
  $d_X(u)=\Theta(d)$ and $d_X(v)=\Theta(d)$.

  \item[$(ii)$]
  $h_{X,d}(u)=1+o(1)$ and $h_{X,d}(v)=1+o(1)$.

  \item[$(iii)$]
  $\opnorm{B_X}=O(d^{-1/2})$.

  \item[$(iv)$]
  $|N_X(u)\cap N_X(v)|=o(d)$.

  \item[$(v)$]
  $t_X(u)+t_X(v)=o(d^2)$.

  \item[$(vi)$]
  For all sufficiently large $n$, every common neighbour of $u$ and $v$,
  and every vertex belonging to a triangle containing $u$ or $v$,
  has degree at least $cd$ in $X$.
\end{enumerate}
All asymptotic statements refer to $n\to\infty$, with $d=d(n)$.
\end{definition}

The proof of the following proposition closely follows the argument used in the proof of~\Cref{lem_1523_1008} and~\Cref{lem:dense-higher-tail}, though with different asymptotic conditions given by Kemeny-tameness. For completeness, we give a full proof below.

\begin{proposition}
\label[proposition]{prop:sparse-addition}
Let $\mathcal F=(\mathcal F_n)_n$ be Kemeny-tame at scale $d$.
Then, uniformly over $(X,u,v)\in\mathcal F_n$,
\begin{align}
  \K(X+uv)-\K(X)
  =
  2\left[
    \frac{1}{(d_X(u)+1)(d_X(v)+1)}
    -\frac{1}{d\cdot(d_X(u)+1)}
    -\frac{1}{d\cdot(d_X(v)+1)}
  \right]
  +o(d^{-2}).
  \label{eq:sparse-addition}
\end{align}
\end{proposition}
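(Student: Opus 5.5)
The plan mirrors the dense-regime argument: write $X'=X+uv$ and use the expansion \eqref{eq:K-expansion} for both graphs, so that
\[
\K(X')-\K(X)=\Delta\tr(S^2)+\Delta\tr(S^3)+\sum_{r\ge4}\Delta\tr(B^r).
\]
This expansion needs $\opnorm{B_X}<1$ and $\opnorm{B_{X'}}<1$. Condition (iii) gives $\opnorm{B_X}=O(d^{-1/2})=o(1)$. For $X'$ I will redo \Cref{lem_1523_1008_B} and \Cref{lem_1524_1008_c} with $\Theta(n)$ replaced by the degree information in (i). That is, $\opnorm{\Gamma-I}=O(d^{-1})$ and $\nucnorm{Z}=O(d^{-1})$, so $\nucnorm{S'-S}=O(d^{-1})$. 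Also $\|\phi'-\phi\|_2=O\bigl((d\,|E|)^{-1/2}\bigr)+O(|E|^{-1})$. Since $2|E(X)|\ge d_X(u)=\Theta(d)$, this is $O(d^{-1})$, and hence $\nucnorm{W}=O(d^{-1})$. Therefore $\rho=\max\{\opnorm{B},\opnorm{B'}\}=O(d^{-1/2})$, and the telescoping bound \eqref{eqn_2026_27_09} gives
\[
\sum_{r\ge4}|\Delta\tr(B^r)|=O(\nucnorm{W}\rho^3)=O(d^{-5/2})=o(d^{-2}).
\]

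For the quadratic moment, the exact identity \eqref{eq:delta2-exact} holds for any graph:
\[
\Delta\tr(S^2)=\frac{2}{(d_u+1)(d_v+1)}-\frac{2R_u}{d_u(d_u+1)}-\frac{2R_v}{d_v(d_v+1)}.
\]
By definition $R_u/d_u=h_X(u)/d$, so $\frac{R_u}{d_u(d_u+1)}=\frac{h_X(u)}{d(d_u+1)}$. Condition (ii) gives $h_X(u)=1+o(1)$, and (i) gives $d(d_u+1)=\Theta(d^2)$. Hence this term equals $\frac{1}{d(d_u+1)}+o(d^{-2})$, and the same holds for $v$. This produces exactly the main term of \eqref{eq:sparse-addition}.

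For the cubic moment, I use the exact identity \eqref{eq:delta3-exact}, which likewise holds for any graph:
\[
\Delta\tr(S^3)=6\Bigl[\tfrac{C_{uv}}{(d_u+1)(d_v+1)}-\tfrac{Q_u}{d_u(d_u+1)}-\tfrac{Q_v}{d_v(d_v+1)}\Bigr].
\]
Here no cancellation is needed, because each term is separately small.
\begin{itemize}
\item By (vi) every common neighbour has degree at least $cd$, so $C_{uv}\le |N(u)\cap N(v)|/(cd)$. By (iv) this is $o(1)$, and the first term is $o(d^{-2})$.
\item Similarly $Q_u\le t_X(u)/(cd)^2$, since each summand corresponds to a triangle through $u$ whose other two vertices have degree at least $cd$ by (vi). By (v) this is $o(1)$, so the second and third terms are also $o(d^{-2})$.
\end{itemize}
Summing the three contributions yields \eqref{eq:sparse-addition}. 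All the bounds involve only constants from Definition~\ref{def:kemeny-tame}, so they are uniform over $\mathcal F_n$.

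I expect the main obstacle to be the perturbation bound for the projector $\Pi$. In the dense case $|E|=\Theta(n^2)$ was used freely. Here only the crude bound $|E|\gtrsim d$ is available, and I must check that it suffices for $\nucnorm{\Pi'-\Pi}=O(d^{-1})$. For the first part of \eqref{eq:delta-phi}, the vector $\mathbf w'-\mathbf w$ has norm $O(d^{-1/2})$, and $\sqrt{q}\ge\sqrt{d}$, so this part is fine. For the second part I need $1-\sqrt{q/(q+2)}=O(q^{-1})=O(d^{-1})$. A secondary point is verifying $\opnorm{B_{X'}}<1$, so that the geometric expansion is legitimate for $X'$; this follows once $\nucnorm{W}=O(d^{-1})$ is established.
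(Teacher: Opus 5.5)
Your proposal is correct and follows the paper's proof essentially step for step. You use the same $\Gamma$/$Z$ decomposition to get $\nucnorm{B_{X+uv}-B_X}=O(d^{-1})$, with $2|E(X)|\ge d_X(u)=\Theta(d)$ controlling the projector term, the same telescoping bound $O(d^{-5/2})$ for the tail $r\ge4$, the exact quadratic identity rewritten via $R_u/d_u=h_{X,d}(u)/d$ to produce the main term, and the same crude termwise bound on the cubic change from conditions (iv)--(vi).
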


\begin{proof}
Fix $(X,u,v)\in\mathcal F_n$, and write $X^+=X+uv$.
We now proceed as in~\Cref{sec_higher_order_tail}, by considering the diagonal matrix $\Gamma$ given by 
 {$\Gamma_{ii}:=\sqrt{\frac{d_X(i)}{d_X(i)+1}}$ for $i \in \{u,v\}$, and 1 if $i \notin \{u,v\}$.}
%,
%
Setting
\[
  Z:=
  \frac{e_ue_v^\top+e_ve_u^\top}
       {\sqrt{(d_X(u)+1)(d_X(v)+1)}},
\]
we have 
$S_{X^+}=\Gamma S_X\Gamma+Z$.
The matrix $\Gamma-I$ has rank at most two
and, by condition~(i), operator norm $O(d^{-1})$.
Since $\opnorm{S_X}=1$, $\opnorm{\Gamma}\le1$, and
$\nucnorm{Z}=O(d^{-1})$ (cf.~\cref{eqn_1712_1607}), the same calculation as in~\cref{eqn_27_09_1124} gives
\begin{equation}\label{eq:edge-deletion-S-bound}
  \nucnorm{S_{X^+}-S_X}=O(d^{-1}).
\end{equation}
Let $q=2|E(X)|$ and write $\bold{w} := D_X^{1/2}\mathbf{1}$
    and
    $\bold{w}' := (D_{X^+})^{1/2}\mathbf{1}$, as in the proof of~\Cref{lem:1813_1607}. 
We have from~\cref{eq:delta-phi} that 
\begin{align*}
\phi_{X^+}-\phi_X=\frac{1}{\sqrt{q+2}}\left(\bold{w}'-\bold{w}\right)
    +
    \left(
        \frac{1}{\sqrt{q+2}}-\frac{1}{\sqrt{q}}
    \right)\bold{w}
    \end{align*}
and thus,
since $q\ge(d_X(u)+d_X(v))=\Omega(d)$,
\[
  \lVert\phi_{X^+}-\phi_X\rVert_2
  =
  O\left(\frac1{\sqrt{qd(n)}}+\frac1q\right)
  =O(d^{-1}).
\]
Consequently, reasoning as in~\Cref{lem_1524_1008_c},
\begin{equation}\label{eq:edge-deletion-Pi-bound}
  \nucnorm{\Pi_{X^+}-\Pi_X}
  \le 2\lVert\phi_{X^+}-\phi_X\rVert_2
  =O(d^{-1}),
\end{equation}
and hence, via~\cref{eq:E-nuclear},
\[
  \nucnorm{B_{X^+}-B_X}=O(d^{-1}).
\]
Condition~(iii) therefore gives
\begin{equation}\label{eq:edge-deletion-B_X+-bound}
  \opnorm{B_{X^+}}
  \le
  \opnorm{B_X}+\nucnorm{B_{X^+}-B_X}
  =O(d^{-1/2}).
\end{equation}

In particular, since $d(n)\to\infty$, condition~(iii) and
\eqref{eq:edge-deletion-B_X+-bound} imply that
$\opnorm{B_X},\opnorm{B_{X^+}}<1$ for all sufficiently large $n$.
Thus~\eqref{eq:K-expansion} applies to both graphs.
Reasoning as in~\cref{eqn_2026_27_09}, we obtain
\[
  \left|\tr(B_{X^+}^r)-\tr(B_X^r)\right|
  \le
  r\nucnorm{B_{X^+}-B_X}
  \max\{\opnorm{B_X},\opnorm{B_{X^+}}\}^{r-1}.
\]
Summing over $r\ge4$ and using~\cref{eqn_2033_27_09}, we find
\begin{equation}\label{eq:sparse-higher-tail}
  \sum_{r\ge4}
  \left|\tr(B_{X^+}^r)-\tr(B_X^r)\right|
  =O(d^{-5/2})
  =o(d^{-2}).
\end{equation}

For the quadratic moment, we know from~\cref{eqn_2035_27_09} that
\[
  \tr(S_X^2)
  =
  2\sum_{ww'\in E(X)}\frac1{d_X(w)d_X(w')}
\]
and, thus, by~\cref{eq:delta2-exact},
\begin{align}
  \tr(S_{X^+}^2)-\tr(S_X^2)
  ={}&
  \frac{2}{(d_X(u)+1)(d_X(v)+1)}
  \notag\\
  &-\frac{2}{d_X(u)(d_X(u)+1)}
  \sum_{w\sim_X u}\frac1{d_X(w)}
  \notag\\
  &-\frac{2}{d_X(v)(d_X(v)+1)}
  \sum_{w\sim_X v}\frac1{d_X(w)}.
  \label{eq:sparse-quadratic-exact}
\end{align}
By condition~(ii) and~\eqref{eq:sparse-harmonic-statistic},
the two sums are
\[
  (1+o(1))\frac{d_X(u)}{d(n)}
  \qquad\text{and}\qquad
  (1+o(1))\frac{d_X(v)}{d(n)},
\]
respectively.
Substituting in \eqref{eq:sparse-quadratic-exact}, together with
condition~(i), gives the main term in \eqref{eq:sparse-addition}
with an error $o(d^{-2})$.
For the cubic moment,~\cref{eq:delta3-exact} gives
\begin{align}
  \tr(S_{X^+}^3)-\tr(S_X^3)
  ={}&
  \frac{6}{(d_X(u)+1)(d_X(v)+1)}
  \sum_{w\in N_X(u)\cap N_X(v)}\frac1{d_X(w)}
  \notag\\
  &-\frac{6}{d_X(u)(d_X(u)+1)}
  \sum_{\substack{ww'\in E(X)\\w,w'\in N_X(u)}}
       \frac1{d_X(w)d_X(w')}
  \notag\\
  &-\frac{6}{d_X(v)(d_X(v)+1)}
  \sum_{\substack{ww'\in E(X)\\w,w'\in N_X(v)}}
       \frac1{d_X(w)d_X(w')}.
  \label{eq:sparse-cubic-exact}
\end{align}
By conditions~(i) and~(vi), there is a constant $C>0$,
independent of $(X,u,v)$, such that
\[
  \left|\tr(S_{X^+}^3)-\tr(S_X^3)\right|
  \le
  C\left[
    \frac{|N_X(u)\cap N_X(v)|}{d(n)^3}
    +
    \frac{t_X(u)+t_X(v)}{d(n)^4}
  \right]
  =o(d^{-2}),
\]
where the last equality uses conditions~(iv) and~(v).
Combining the quadratic and cubic estimates with
\eqref{eq:B-S-moments}, \eqref{eq:K-expansion}, and
\eqref{eq:sparse-higher-tail} proves \eqref{eq:sparse-addition}.
\end{proof}

We immediately have the following corollary.

\begin{corollary}
\label[corollary]{cor:sparse-deletion}
Let $\mathcal F=(\mathcal F_n)_n$ be Kemeny-tame at scale $d$.
For each $(X,u,v)\in\mathcal F_n$,
set $G=X+uv$, and write
$a=d_G(u)$, $b=d_G(v)$.
Then, uniformly over $(X,u,v)\in\mathcal F_n$,
\begin{align}
  \K(G-uv)-\K(G)
  &=
  \frac{2(a+b-d)}{abd}+o(d^{-2}).
  \label{eq:sparse-deletion}
\end{align}
\end{corollary}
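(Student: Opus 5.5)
The plan is to apply \Cref{prop:sparse-addition} with base graph $X$ and then rewrite everything in terms of the degrees in $G=X+uv$. Since $X=G-uv$ is connected (it belongs to a Kemeny-tame family), we have
\[
\K(G-uv)-\K(G)=\K(X)-\K(X+uv)=-\bigl(\K(X+uv)-\K(X)\bigr).
\]
Moreover $d_X(u)+1=a$ and $d_X(v)+1=b$. Substituting these into \eqref{eq:sparse-addition} should give, uniformly over $\mathcal F_n$,
\[
\K(G-uv)-\K(G)=-2\Bigl[\frac1{ab}-\frac1{da}-\frac1{db}\Bigr]+o(d^{-2}).
\]

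The remaining step is to put the bracket over a common denominator:
\[
\frac1{ab}-\frac1{da}-\frac1{db}=\frac{d-b-a}{abd}.
\]
Multiplying by $-2$ then yields exactly $\frac{2(a+b-d)}{abd}$. The error term $o(d^{-2})$ carries over unchanged, because the sign flip does not affect it. Uniformity over $\mathcal F_n$ is inherited directly from \Cref{prop:sparse-addition}.

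There is no genuine obstacle here; the only points to check are bookkeeping. First, the degrees in $G$ are the $X$-degrees plus one, which is exactly the form of the denominators appearing in \eqref{eq:sparse-addition}, so no reexpansion is needed. Second, condition (i) gives $a,b=\Theta(d)$, so the main term is $O(d^{-2})$ and is comparable to the error term, which matches the statement as written. Third, the connectedness of $G-uv$ required for the deletion to make sense holds because $X$ is connected by the definition of a Kemeny-tame family.
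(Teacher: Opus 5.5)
Your proposal is correct and follows essentially the same route as the paper: apply \Cref{prop:sparse-addition} to the base graph $X=G-uv$, substitute $d_X(u)+1=a$ and $d_X(v)+1=b$, flip the sign, and combine the bracket over the common denominator $abd$. The algebra, the carried-over $o(d^{-2})$ error, and the inherited uniformity all check out.
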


\begin{proof}
{Applying \Cref{prop:sparse-addition} with $X=G-uv$ and simplifying} terms gives
\begin{align*} 
 \K(G-uv)-\K(G) 
  &=
  -2\left[
    \frac{1}{{ab}}
    -\frac{1}{{a}d}
    -\frac{1}{{b}d}
  \right]
  +o(d^{-2}) \\
  & =
2\left[\frac{a+b-d}{abd} \right]+o(d^{-2}). \qedhere 
\end{align*}

\end{proof}

\subsection{Random-graph estimates}
\label{sec:sparse-random-graph-estimates}
Throughout this subsection, $G\sim G(n,p)$ with $p=p(n)=o(1)$, and
$\avdeg=(n-1)p$ satisfies $\avdeg-\log n\to\infty$; in particular,
$\avdeg\ge\log n$ for all sufficiently large $n$.
(\Cref{lem:sparse-barriers} and its proof hold for arbitrary $p=p(n)\in(0,1)$.)
We write $h_G(v)$ for the harmonic neighbour statistic
$h_{G,\avdeg}(v)$ at scale $\avdeg$; see~\eqref{eq:sparse-harmonic-statistic}.
 
Every bound below that holds whp comes with deterministic constants and
deterministic rates (such as $\xi_n$ and $\zeta_n$), which do not depend on
the vertices involved. This is what makes \Cref{prop:sparse-addition} and
\Cref{cor:sparse-deletion} applicable. For instance, in the proof of
\Cref{thm:nonedge-threshold}(ii), let $\mathcal F_n$ consist of all triples
$(X,u,v)$ such that $uv\notin E(X)$ and $X$ is a graph on $[n]$ satisfying
the bounds in \Cref{lem:sparse-uniform-estimates}(i)--(ii),
$\min_wd_X(w)>(\frac12+\eta)\avdeg$ and $\max_w|h_X(w)-1|\le\zeta_n$, with
$\eta$ from \Cref{lem:sparse-barriers}(i) and $\zeta_n=\zeta_n(\frac12)$ from
\Cref{lem:sparse-uniform-estimates}(iii). Then $\mathcal F=(\mathcal F_n)_n$
is Kemeny-tame at scale $\avdeg$, and whp $(G,u,v)\in\mathcal F_n$ for every
nonedge $uv$ of $G$. 
The application to the Kemeny--Braess edge result (\Cref{thm:edge-threshold}) is similar, with 
the Kemeny-tame family consisting of triples $(G-uv,u,v)$.

Consider the function $I$ given by
\begin{equation}\label{eq:sparse-rate-function}
  I(x)=1-x+x\log x,\qquad x>0.
\end{equation}
Since $I'(x)=\log x$, the function $I$ is
strictly decreasing on $(0,1]$ and strictly increasing on $[1,\infty)$.
Moreover, $2I(1/2)=1-\log2$; hence, the parameter $C^*$ in~\Cref{thm:edge-threshold} and~\Cref{thm:nonedge-threshold} satisfies $C^*=1/(2I(1/2))$.
Let $Y$ be a binomial random variable with mean $\mu$. Chernoff's inequality
\cite[Theorem~2.1]{JansonLuczakRucinski} gives
\begin{equation}\label{eq:sparse-chernoff}
  \Pp(Y\le x\mu)\le e^{-\mu I(x)}\quad(0<x\le1),
  \qquad
  \Pp(Y\ge x\mu)\le e^{-\mu I(x)}\le\Bigl(\frac ex\Bigr)^{x\mu}\quad(x\ge1),
\end{equation}
and
\begin{equation}\label{eq:sparse-bernstein}
  \Pp\bigl(|Y-\mu|\ge t\bigr)
  \le2\exp\Bigl(-\frac{t^2}{2(\mu+t/3)}\Bigr),
  \qquad t\ge0.
\end{equation}

We shall use the following result, due to Hoffman, Kahle, and Paquette.

\begin{theorem}
[\protect{\cite[Theorem~1.1]{HoffmanKahlePaquette}}]
\label{theorem_HKP}
Fix $\delta>0$ and suppose that
\[
p\geq \left(\frac12+\delta\right)\frac{\log n}{n}.
\]
Take $G\sim G(n,p)$ and let $\tilde G$ be a largest connected component of $G$. For every fixed
$\varepsilon>0$, there is a constant $C_0=C_0(\delta,\varepsilon)$ such that
every nonzero
eigenvalue $\nu$ of the normalised Laplacian of $\tilde G$ satisfies
\[
|1-\nu|<C_0\avdeg^{-1/2}
\]
with probability at least
\[
1-C_0n\exp\bigl(-(2-\varepsilon)\avdeg\bigr)
  -C_0\exp\bigl(-\avdeg^{1/4}\log n\bigr).
\]
\end{theorem}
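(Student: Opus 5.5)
Since this result is imported from \cite{HoffmanKahlePaquette}, the paper simply invokes it. If I had to prove it, I would follow the standard route for spectral concentration of normalised Laplacians of sparse random graphs. The plan has three parts: a concentration bound for $A-\E A$, a transfer to the normalised matrix $S_{\tilde G}$, and a separate treatment of atypically low-degree vertices. The transfer rests on writing $S=D^{-1/2}AD^{-1/2}$ and comparing it with $\avdeg^{-1}A$.

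\textbf{Step 1: adjacency concentration.} I would show that, with probability at least $1-C_0\exp(-\avdeg^{1/4}\log n)$, one has $\opnorm{A-p\one\one^\top}=O(\sqrt{\avdeg})$ once rows and columns of vertices of degree much larger than $\avdeg$ are discarded. To prove this I would use the Kahn--Szemer\'edi/Feige--Ofek argument: an $\varepsilon$-net over unit vectors, with the pairs $(x_i,y_j)$ split into light and heavy couples. The light couples are handled by Bernstein's inequality \eqref{eq:sparse-bernstein}. The heavy couples are handled by a discrepancy property of edge counts between vertex sets, which holds with the stated superpolynomially small failure probability. When $\avdeg\ge(\frac12+\delta)\log n$, Chernoff \eqref{eq:sparse-chernoff} shows that vertices of degree above $K\avdeg$ are absent whp, so no trimming is actually needed.

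\textbf{Step 2: transfer to $S$.} On $\tilde G$ I would split the vertices into typical ones, with $d_v=\avdeg(1+O(\avdeg^{-1/2}))$ up to bounded factors, and atypical ones of small degree $d_v\le\eta\avdeg$. For typical vertices, $D^{-1/2}$ differs from $\avdeg^{-1/2}I$ by a diagonal perturbation. Together with Step 1, this gives $\opnorm{S-\Pi}=O(\avdeg^{-1/2})$ restricted to the typical part, after checking that the rank-one part $p\one\one^\top$ is aligned with $\phi$ up to error $O(\avdeg^{-1/2})$. Deviations of moderate size in the degrees contribute errors controlled through $\sum_v (d_v-\avdeg)^2/\avdeg^2$ and the same discrepancy bounds.

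\textbf{Step 3, the main obstacle: low-degree vertices.} Near $p\approx\frac12\log n/n$ there are many vertices of bounded degree, and for these the normalisation by $D^{-1/2}$ amplifies entries, so a plain norm comparison fails. My approach would be structural. First, show that whp any two vertices of degree $\le\eta\avdeg$ are at distance at least $3$, and that each such vertex has all but $O(1)$ of its neighbours typical. The failure probability of this property is where the term $n\exp(-(2-\varepsilon)\avdeg)$ comes from: the obstruction is an edge both of whose endpoints have small degree, for instance an isolated edge, which has normalised-Laplacian eigenvalue $2$. That configuration has expected count about $n^2p\,e^{-2\avdeg}\approx n e^{-(2-\varepsilon)\avdeg}$.

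Given this separation, each low-degree vertex $v$ only produces a local rank-bounded perturbation. Its row of $S$ has entries $1/\sqrt{d_v d_w}\approx 1/\sqrt{d_v\avdeg}$, so the row has $\ell_2$-norm $O(\avdeg^{-1/2})$ however small $d_v$ is. Because these vertices are pairwise far apart, the corresponding rows and columns form a block of operator norm $O(\avdeg^{-1/2})$, for example via a Schur test or Gershgorin-type bound. Combining Steps 2 and 3 by the triangle inequality gives $\opnorm{S_{\tilde G}-\Pi}\le C_0\avdeg^{-1/2}$. Since the nonzero normalised-Laplacian eigenvalues $\nu$ satisfy $1-\nu\in\spec(S_{\tilde G}-\Pi)$, this yields $|1-\nu|<C_0\avdeg^{-1/2}$ with the stated probability.
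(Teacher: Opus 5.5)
The paper gives no proof of this statement. It is quoted from Hoffman--Kahle--Paquette and used only once, with $\delta=\tfrac14$ and $\varepsilon=\tfrac12$, to get $\opnorm{B_G}<C\avdeg^{-1/2}$ whp. Your outline is a sensible route, but it does not close as written.

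Put $L=\{v\in V(\tilde G): d_v<\eta\avdeg\}$, $H=V(\tilde G)\setminus L$ and $W=A-p(J-I)$. Your final ``triangle inequality'' silently drops part of $\Pi$. We have $S-\Pi=(S_{HH}-\phi_H\phi_H^\top)+(S_{LH}+S_{HL})-(\phi_L\phi_H^\top+\phi_H\phi_L^\top+\phi_L\phi_L^\top)$, and Steps~2--3 only address the first two pieces. The last piece is not negligible for free. Using $S\phi=\phi$ and $S_{LL}=0$, take the unit vector $x\perp\phi$ equal to a positive multiple of $\phi_H$ on $H$ and of $-\phi_L$ on $L$. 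It has $x^\top Sx=-m/(1-m)$, where $m=\|\phi_L\|_2^2=\sum_{v\in L}d_v/(2|E|)$. So you need an extra input such as $|L|\le n/\avdeg$, which gives $\|\phi_L\|_2=O(\avdeg^{-1/2})$. This holds with failure probability $e^{-\Omega(n)}$ by a union bound over vertex sets of that size, but it is missing from your plan.

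Also, in Step~3 the operator-norm bound for the $L$-rows must come from their disjoint supports. Distance $\ge3$ makes the rows of $S_{LH}$ orthogonal, so $\opnorm{S_{LH}}$ equals the largest row norm, which is at most $(\eta\avdeg)^{-1/2}$. Gershgorin fails here, since these rows have $\ell_1$-norm of order $\sqrt{d_v/\avdeg}$, which can be of order $\sqrt\eta$. An unweighted Schur test gives only $O(\avdeg^{-1/4})$.

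Step~2, as phrased, does not deliver the rate. Vertices with $d_v=\avdeg(1+O(\avdeg^{-1/2}))$ together with $L$ do not exhaust $V(\tilde G)$: a positive fraction of vertices deviate from $\avdeg$ by more than $K\sqrt{\avdeg}$. On $H$, the diagonal $F=\avdeg^{1/2}D_H^{-1/2}-I$ has $\opnorm{F}=\Theta(1)$ when $\avdeg\asymp\log n$, and of order $\sqrt{\log n/\avdeg}\gg\avdeg^{-1/2}$ in general. So a diagonal-perturbation bound as in the paper's dense-regime appendix loses the rate. What works is to let $F$ act only on $W$:
$S_{HH}=\tfrac{p}{\avdeg}gg^\top+\avdeg^{-1}(I+F)(W_{HH}-pI)(I+F)$, with $g_v=\sqrt{\avdeg/d_v}$.
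The second term is $O(\avdeg^{-1/2})$ because $\opnorm{W}=O(\sqrt{\avdeg})$ and $\opnorm{I+F}\le\eta^{-1/2}$. For the first term, $\sqrt{p/\avdeg}\,g$ is within $O(\avdeg^{-1/2})$ of $\phi_H$ in $\ell_2$, because $\sum_v(d_v-\avdeg)^2=\|W\one\|_2^2\le n\opnorm{W}^2=O(n\avdeg)$. Your remark on $\sum_v(d_v-\avdeg)^2/\avdeg^2$ points here, but it has to be the argument itself. No discrepancy input is needed beyond $\opnorm{W}$.

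Finally, the probability claim in Step~1 is unsupported. A Feige--Ofek discrepancy property, as usually formulated with fixed constants, has only polynomially small failure probability in $n$: copies of $K_{k,k}$, for a fixed $k$ depending on the constants, already violate it. That exceeds the stated bound once $\avdeg/\log n$ is large. One way to get the claimed bound is $\E\opnorm{W}=O(\sqrt{\avdeg}+\sqrt{\log n})$ (Bandeira--van Handel) plus Talagrand's inequality for the convex $\sqrt2$-Lipschitz map from the edge indicators to $\opnorm{W}$. This gives $\opnorm{W}=O(\sqrt{\avdeg})$ with failure probability at most $4e^{-2\avdeg}\le 4ne^{-(2-\varepsilon)\avdeg}$. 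With these repairs the outline proves the statement, including the whp version that the paper actually uses.
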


The following lemma collects the global estimates needed to apply
\Cref{prop:sparse-addition} and \Cref{cor:sparse-deletion}.
 
\begin{lemma}\label[lemma]{lem:sparse-uniform-estimates}
There are an absolute constant $C\ge1$ and a deterministic sequence
$\xi_n\to0$ such that the following hold.
\begin{enumerate}[label=\textnormal{(\roman*)},leftmargin=2.2em]
  \item Whp, 
   $G\sim G(n,p)$ is connected and $\opnorm{B_G}< C\avdeg^{-1/2}$.
  \item Whp,
  \[
    \max_vd_G(v)\le C\avdeg,\qquad
    \max_{u\ne v}|N_G(u)\cap N_G(v)|\le\xi_n\avdeg,\qquad
    \max_vt_G(v)\le\xi_n\avdeg^2.
  \]
  \item For every fixed $c>0$, there is a deterministic sequence
  $\zeta_n=\zeta_n(c)\to0$ such that, whp, the following holds:
  if $\min_vd_G(v)\ge c\avdeg$, then $|h_G(v)-1|\le\zeta_n$ for every
  vertex $v$.
\end{enumerate}
\end{lemma}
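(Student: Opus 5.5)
We prove the three parts separately, working throughout in the regime $p=o(1)$, $\avdeg-\log n\to\infty$, so that $\log n\le\avdeg=o(n)$.

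\emph{Part (i).} Connectivity whp is the classical Erd\H{o}s--R\'enyi result, since $\avdeg-\log n\to\infty$. For the spectral bound we invoke \Cref{theorem_HKP} with $\delta=\frac12$ and $\varepsilon=\frac12$. Its hypothesis $p\ge(\frac12+\delta)\log n/n$ holds because $\avdeg\ge\log n$. Whp $G$ is connected, so $\tilde G=G$. The eigenvalues of the normalised Laplacian are $1-\lambda_i$, and the nonzero ones correspond to $i\ge2$. The conclusion $|1-\nu|<C_0\avdeg^{-1/2}$ therefore gives $|\lambda_i|<C_0\avdeg^{-1/2}$ for $i\ge2$. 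Since $\spec(B)=(0,\lambda_2,\dots,\lambda_n)$, this yields $\opnorm{B}<C_0\avdeg^{-1/2}$.

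It remains to check that the failure probability is $o(1)$.
\begin{itemize}
\item The first term is $n e^{-(3/2)\avdeg}\le n\cdot n^{-3/2}\to0$.
\item The second term is $\exp(-\avdeg^{1/4}\log n)\le n^{-1}\to0$.
\end{itemize}
So we may take $C\ge C_0(\frac12,\frac12)$.

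\emph{Part (ii).} All three bounds come from a union bound combined with \eqref{eq:sparse-chernoff}.

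\emph{Degrees.} Each $d_v\sim\mathrm{Bin}(n-1,p)$. Apply the upper tail with $x=C$ large; the bound is $n e^{-\avdeg I(C)}\le n^{1-I(C)}\to0$ once $I(C)>1$, for instance $C=e^2$.

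\emph{Codegrees.} Each codegree is $\mathrm{Bin}(n-2,p^2)$ with mean $\mu\le\avdeg p=o(\avdeg)$. Set $\xi_n:=\max\{\sqrt p,\ 3/\log\log n\}\to0$ and $t=\xi_n\avdeg$. Using $\Pp(Y\ge t)\le(e\mu/t)^t$, the ratio $e\mu/t$ is at most $e\sqrt p\to0$, so each probability is at most $\exp(-t\log(t/(e\mu)))$. We need this to be $o(n^{-2})$, i.e. $t\log(t/e\mu)\ge 3\log n$.
\begin{itemize}
\item If $\sqrt p\ge 3/\log\log n$, then $t\ge\sqrt p\log n$ and $\log(t/e\mu)\ge\frac12\log(1/p)-1$. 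When $p\le n^{-1/2}$ the product is at least $\log n\cdot\sqrt p\cdot\frac14\log n$, which is fine. When $p>n^{-1/2}$, $\avdeg>\sqrt n$ and $t\gg\log n$ with $\log(t/e\mu)\ge1$, which is also fine.
\item Otherwise $t\ge 3\log n/\log\log n$ and $t/\mu\ge \xi_n/p\ge n^{1/3}$ roughly, so $\log(t/e\mu)\gtrsim\log n$ and the product far exceeds $3\log n$.
\end{itemize}
The careful bookkeeping is routine; I would split into $\avdeg\le\log^2 n$ and $\avdeg>\log^2 n$.

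\emph{Triangles.} Write $t_v$ as the number of edges inside $N(v)$. Condition on $d_v\le C\avdeg$. Given $N(v)$, the count is $\mathrm{Bin}(\binom{d_v}{2},p)$ with mean $\le C^2\avdeg^2p/2=o(\avdeg^2)$. The same tail argument, with threshold $\xi_n\avdeg^2$ (enlarging $\xi_n$ if needed), gives probability $o(n^{-1})$ per vertex, and a union bound over $v$ finishes.

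\emph{Part (iii).} We have $h_G(v)=\frac{\avdeg}{d_v}\sum_{w\sim v}\frac1{d_w}$. For $w\sim v$, write $d_w=1+d_w^{(v)}$, where $d_w^{(v)}$ counts neighbours of $w$ outside $v$. Replacing $d_w$ by $d_w^{(v)}$ changes each term by relative error $O(1/\avdeg)$, given $\min d\ge c\avdeg$. Conditional on $N(v)$, the variables $d_w^{(v)}$ for $w\in N(v)$ are nearly independent binomials $\mathrm{Bin}(n-2,p)$; they are coupled only through edges inside $N(v)$, and there are at most $t_v=o(\avdeg^2)$ such edges by (ii). Let $\tilde d_w$ be the degree of $w$ into $V\setminus(N(v)\cup\{v\})$. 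These are independent given $N(v)$, each differs from $d_w^{(v)}$ by at most its degree into $N(v)$, and summing these discrepancies gives $2t_v$. On the event $\tilde d_w\ge c\avdeg/2$, the total effect on $\sum 1/d_w$ is $O(t_v/\avdeg^2)=o(1)$ after dividing by $d_v/\avdeg=\Theta(1)$.

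Then $\frac1{d_v}\sum_{w}\frac{\avdeg}{\max(\tilde d_w,c\avdeg/2)}$ is an average of $d_v\ge c\avdeg$ independent bounded variables, each with mean $1+o(1)$ by concentration of $\tilde d_w$ around $\avdeg$. Hoeffding gives deviation $\zeta_n$ with probability $\exp(-\Omega(\zeta_n^2\avdeg))$. With $\avdeg\ge\log n$ this is $o(1/n)$ only if $\zeta_n^2\log n\gg\log n$, which fails.

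This is the main obstacle. The per-vertex failure probability from a plain Hoeffding bound is only $n^{-\Omega(\zeta^2)}$, too weak for a union bound when $\avdeg\asymp\log n$. I would instead exploit that the truncated variables lie in $[0,2/c]$ but are concentrated: each $\avdeg/\tilde d_w$ differs from $1$ by more than $\eta$ only with probability $q=e^{-\Omega(\eta^2\avdeg)}$. The number of such exceptional $w$ among $d_v$ is binomial with mean $d_vq$. The probability that more than $\eta d_v$ of them are exceptional is at most $(eq/\eta)^{\eta c\avdeg}=\exp(-\Omega(\eta^3\avdeg^2))$, which is much smaller than $n^{-2}$. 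Outside the exceptional ones each term is within $\eta$ of $1$, and the exceptional ones contribute at most $\eta\cdot 2/c$. Taking $\eta=\eta_n\to0$ slowly (e.g. $\eta_n=\avdeg^{-1/4}$) gives $\zeta_n\to0$ after a union bound over $v$.
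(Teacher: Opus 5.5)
Your proposal is correct and follows essentially the paper's route. Parts (i)--(ii) use the Hoffman--Kahle--Paquette theorem plus Chernoff and union bounds. Part (iii) conditions on $N(v)$, uses the independence of the neighbours' external degrees, bounds the number of atypical neighbours by a binomial tail, and caps their contribution through the minimum-degree hypothesis.

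The differences from the paper are minor. The paper tolerates fewer than $\lceil 9\avdeg^{1/2}\rceil$ atypical neighbours and uses the per-pair codegree bound, whereas you allow an $\eta_n$-fraction and aggregate the discrepancies through $t_v$. The paper's choice $\xi_n=\log(1/p)^{-1/2}$ also avoids your case analysis. In that analysis, your claim $\xi_n/p\ge n^{1/3}$ is false in general, but your proposed split at $\avdeg=\log^2 n$ rescues it.

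One slip needs repair. Given $N(v)$, the mean of $\tilde d_w$ is $\avdeg-|N(v)|p$, not $\avdeg$. So for $p\gg\avdeg^{-1/4}$, your example $\eta_n=\avdeg^{-1/4}$ makes almost every neighbour exceptional, and the bound $q=e^{-\Omega(\eta^2\avdeg)}$ fails. You can fix this in either of two ways:
\begin{itemize}
\item centre atypicality at this conditional mean, as the paper does with $\ell$, at the harmless cost of a factor $1-Cp$;
\item take $\eta_n\ge 2Cp$, which is possible because $p\to0$.
\end{itemize}
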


\begin{proof}
(i) Connectivity is the classical theorem of Erd\H{o}s and
R\'enyi~\cite{ErdosRenyi}. For the spectral bound, we apply
\Cref{theorem_HKP} with its parameters set to
$\delta=\frac14$ and $\varepsilon=\frac12$. This is allowed because
$p\ge\frac34\cdot\frac{\log n}{n}$ for all large $n$. The failure probability
given there is then at most $C_0ne^{-3\avdeg/2}+C_0e^{-\avdeg^{1/4}\log n}$,
which tends to $0$ because $\avdeg\ge\log n$. Thus, whp, every nonzero
eigenvalue $\nu$ of the normalised Laplacian of the giant component satisfies
$|1-\nu|< C_0\avdeg^{-1/2}$. On the connectivity event the giant component
is $G$ itself, and the eigenvalues of $B_G$ are $0$ and
$1-\nu_j(\mathcal L_G)$ for $2\le j\le n$ (see
\Cref{sec_spectral_expansion}), where $\mathcal L_G$ is the normalised Laplacian of $G$. Hence, $\opnorm{B_G}< C_0\avdeg^{-1/2}$.
 
(ii) Let $C:=\max\{e^2,C_0\}$. Each degree $d_G(v)$ is
$\operatorname{Bin}(n-1,p)$ with mean $\avdeg\ge\log n$. So, for each vertex $v$ of $G$,
\eqref{eq:sparse-chernoff} with $x=e^2$ gives
$\Pp(d_G(v)\ge e^2\avdeg)\le e^{-I(e^2)\avdeg}=e^{-(e^2+1)\avdeg}\le n^{-3}$,
and a union bound over $v$ gives the degree bound.
 
Next, put $L:=\log(1/p)$, so that $L\to\infty$, and $\xi_n:=L^{-1/2}$. For
$u\ne v$, $|N_G(u)\cap N_G(v)|$ is
$\operatorname{Bin}(n-2,p^2)$, whose mean is upper bounded by $\avdeg p\le\xi_n\avdeg$.
By \eqref{eq:sparse-chernoff},
\[
  \Pp\bigl(|N_G(u)\cap N_G(v)|\ge\xi_n\avdeg\bigr)
  \le\Bigl(\frac{e\avdeg p}{\xi_n\avdeg}\Bigr)^{\xi_n\avdeg}
  =\exp\bigl(-\avdeg\,\xi_n(L+\log\xi_n-1)\bigr)
  \le n^{-3}
\]
for all large $n$. Here we used $\xi_n(L+\log\xi_n-1)=(1-o(1))L^{1/2}\to\infty$
and $\avdeg\ge\log n$. A union bound over the fewer than $n^2$ pairs gives
the second bound of (ii).
 
Finally, conditionally on $N_G(v)$, the number $t_G(v)$ of edges inside
$N_G(v)$ is $\operatorname{Bin}(\binom{d_G(v)}2,p)$. On the event
$\{d_G(v)\le C\avdeg\}$, it is therefore stochastically dominated by 
$\operatorname{Bin}(\lfloor C^2\avdeg^2/2\rfloor,p)$, whose mean is at most
$C^2\avdeg^2p/2$. Using~\cref{eq:sparse-chernoff} again, we get
\[
  \Pp\bigl(d_G(v)\le C\avdeg,\ t_G(v)\ge\xi_n\avdeg^2\bigr)
  \le\Bigl(\frac{eC^2p}{2\xi_n}\Bigr)^{\xi_n\avdeg^2}
  \le n^{-3}
\]
for all large $n$. A union bound over $v$ completes the proof of (ii).
 
(iii) Fix $c>0$. Replacing $c$ by $\min\{c,1\}$, we may assume that
$c\le1$. Define $\eta_n:=\avdeg^{-1/4}$ and $M_n:=\lceil9\avdeg^{1/2}\rceil$.
 
\emph{Step 1: few atypical neighbours.} Fix a vertex $v$, and condition on
$N_G(v)=\mathcal N$ with $|\mathcal N|\le C\avdeg$. For $w\in\mathcal N$, let
$q_w:=1+\bigl|N_G(w)\setminus(\mathcal N\cup\{v\})\bigr|$,
so that
$d_G(w)=q_w+|N_G(w)\cap N_G(v)|$.
Conditionally on $\mathcal N$, the variables $q_w-1$, $w\in\mathcal N$, are
independent $\operatorname{Bin}(n-|\mathcal N|-1,p)$ variables: they are
determined by disjoint sets of pairs, none of which contains $v$. Put
$\ell:=1+(n-|\mathcal N|-1)p=\avdeg+1-|\mathcal N|p$, so that
$\E q_w=\ell$. Since $|\mathcal N|\le C\avdeg$,
\begin{equation}\label{eq:sparse-ell-bounds}
  \avdeg(1-Cp)\le\ell\le\avdeg+1.
\end{equation}
Call $w\in\mathcal N$ \emph{atypical} (for $v$) if
$|q_w-\ell|>\eta_n\ell$. By~\eqref{eq:sparse-bernstein} and
\eqref{eq:sparse-ell-bounds}, for all large $n$,
\[
  \Pp(w\text{ is atypical}\mid\mathcal N)
  \le2\exp\Bigl(-\frac{\eta_n^2\ell}{3}\Bigr)
  \le2e^{-\avdeg^{1/2}/4}=:\pi_n.
\]
Once we fix $N_G(v)=\mathcal N$, the events “$w$ is atypical” are independent for different $w\in\mathcal N$. Hence, the probability that at least $M_n$ vertices
of $\mathcal N$ are atypical is at most
\[
  \binom{|\mathcal N|}{M_n}\pi_n^{M_n}
  \le(C\avdeg\,\pi_n)^{M_n}
  =\exp\Bigl(-M_n\Bigl(\frac{\avdeg^{1/2}}4-\log(2C\avdeg)\Bigr)\Bigr)
  \le e^{-2\avdeg}
  \le n^{-2}
\]
for all large $n$. (For the second-to-last inequality, note that
$M_n\avdeg^{1/2}/4\ge\frac94\avdeg$ and $M_n\log(2C\avdeg)=o(\avdeg)$.)
Averaging over $\mathcal N$ and taking a union bound over $v$ shows the
following: whp, every vertex of degree at most $C\avdeg$ has fewer than
$M_n$ atypical neighbours. Let us call this event $\mathcal E$.
 
\emph{Step 2: the harmonic statistic.} We now work on the intersection of the
event $\mathcal E$ with the events in~(ii), and assume that
$\min_wd_G(w)\ge c\avdeg$. Fix $v$ and write $\mathcal N=N_G(v)$, so
that $c\avdeg\le|\mathcal N|\le C\avdeg$. If $w\in\mathcal N$ is typical,
then
\begin{align}
\label{eqn_2342_2809}
  (1-\eta_n)\ell\le q_w\le d_G(w)\le q_w+\xi_n\avdeg
  \le(1+\eta_n)\ell+\xi_n\avdeg,
\end{align}
where the third inequality uses that $d_G(w)=q_w+|N_G(v)\cap N_G(w)|$ and $|N_G(v)\cap N_G(w)|\leq\xi_n\avdeg$ by (ii).
Every $w\in\mathcal N$ satisfies $d_G(w)\ge c\avdeg$. Moreover,
$|\mathcal N|=d_G(v)\ge c\avdeg$, and on $\mathcal E$ fewer than $M_n$
vertices of $\mathcal N$ are atypical. For each typical neighbour $w$,~\cref{eqn_2342_2809} and~\eqref{eq:sparse-ell-bounds} give
\[
  (1-\eta_n)\avdeg(1-Cp)
  \le d_G(w)
  \le (1+\eta_n)(\avdeg+1)+\xi_n\avdeg.
\]
Consequently,
\[
  \frac{1}{(1+\eta_n)(1+\avdeg^{-1})+\xi_n}
  \le \frac{\avdeg}{d_G(w)}
  \le \frac{1}{(1-\eta_n)(1-Cp)}.
\]

We now lower bound $h_G(v)$ by retaining only the contributions
of its typical neighbours.
There are at least $|\mathcal N|-M_n$ such neighbours, so
\begin{align*}
  h_G(v)
  &=\frac{1}{|\mathcal N|}
    \sum_{w\in\mathcal N}\frac{\avdeg}{d_G(w)}\\
  &\ge
    \frac{1-M_n/|\mathcal N|}
         {(1+\eta_n)(1+\avdeg^{-1})+\xi_n}\\
  &\ge
    \frac{1-M_n/(c\avdeg)}
         {(1+\eta_n)(1+\avdeg^{-1})+\xi_n},
\end{align*}
where the last inequality uses $|\mathcal N|\ge c\avdeg$.

Next, we upper bound $h_G(v)$ as follows. Note that $v$ has at most $|\mathcal N|$ typical
neighbours, each contributing at most
$1/((1-\eta_n)(1-Cp))$ to the sum of the scaled reciprocals in $h_G(v)$.
Each atypical neighbour contributes at most $1/c$, because
$d_G(w)\ge c\avdeg$, and there are fewer than $M_n$ of them.
Therefore,
\begin{align*}
  h_G(v)
  &\le \frac{1}{|\mathcal N|}
    \left(
      \frac{|\mathcal N|}{(1-\eta_n)(1-Cp)}
      +\frac{M_n}{c}
    \right)\\
  &=\frac{1}{(1-\eta_n)(1-Cp)}
    +\frac{M_n}{c|\mathcal N|}\\
  &\le\frac{1}{(1-\eta_n)(1-Cp)}
    +\frac{M_n}{c^2\avdeg},
\end{align*}
again using $|\mathcal N|\ge c\avdeg$.
Both bounds on $h_G(v)$ are deterministic, do not depend on $v$, and tend to $1$.
Taking for $\zeta_n$ the larger of their distances from $1$ proves~(iii).
\end{proof}

The next lemma provides vertices and edges with the degree and
neighbourhood properties needed for the lower-threshold arguments.
Whp there are many vertices, respectively edges, whose degrees
equal a prescribed value $k=\lfloor\alpha\avdeg\rfloor$, and whose
neighbourhoods are as regular as \Cref{def:kemeny-tame} requires. 
In the following, 
we write
$b_N(j):=\Pp(\operatorname{Bin}(N,p)=j)$.
 
\begin{lemma}\label[lemma]{lem:sparse-low-degree-reservoirs}
Fix $\alpha\in(0,1)$ and $\delta>0$, and set $k:=\lfloor\alpha\avdeg\rfloor$.
\begin{enumerate}[label=\textnormal{(\roman*)},leftmargin=2.2em]
  \item Suppose that $n\exp(-\avdeg I(\alpha))\ge n^\delta$ for all large
  $n$. Then, whp, there are two nonadjacent vertices $u,v$ without common
  neighbours such that, for each $w\in\{u,v\}$,
  \[
    d_G(w)=k,\qquad t_G(w)=0,\qquad|h_G(w)-1|\le2\avdeg^{-1/3}.
  \]
  \item Suppose that $n\exp(-2\avdeg I(\alpha))\ge n^\delta$ for all large
  $n$. Then, whp, there is an edge $uv$ of $G$ with the following
  properties: $d_G(u)=d_G(v)=k$; neither $u$ nor $v$ lies in a triangle
  (in particular, $u$ and $v$ have no common neighbour); and, for $H=G-uv$,
  \[
    |h_H(u)-1|\le2\avdeg^{-1/3}\qquad\text{and}\qquad
    |h_H(v)-1|\le2\avdeg^{-1/3}.
  \]
\end{enumerate}
\end{lemma}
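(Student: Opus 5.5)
We will use the second moment method on a count of "good" vertices (for (i)) or "good" edges (for (ii)). For (i), call a vertex $w$ \emph{good} if $d_G(w)=k$, $t_G(w)=0$ and $|h_G(w)-1|\le 2\avdeg^{-1/3}$, and let $X$ count good vertices. For (ii), call an edge $uv$ good if both endpoints have degree $k$, neither lies in a triangle, and the harmonic condition holds in $H=G-uv$; let $Y$ count good edges. The plan is to show that $\E X\ge n^{\delta/2}$ and $\Var X=o((\E X)^2)$, and similarly for $Y$. Then we show that some two good vertices are nonadjacent with no common neighbour.

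\textbf{First moment.} Since $k=\lfloor\alpha\avdeg\rfloor$, Stirling's formula gives $b_{n-1}(k)=\exp(-\avdeg I(\alpha)+O(\log\avdeg))$. By hypothesis, $n\,b_{n-1}(k)\ge n^{\delta-o(1)}$, using $\log\avdeg=o(\avdeg)$ and $\avdeg=O(\log n)$ in the relevant range. For an edge, $\binom n2 p\, b_{n-2}(k-1)^2\approx n\avdeg\exp(-2\avdeg I(\alpha))/2$, which is at least $n^{\delta-o(1)}$.

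Next, condition on $N_G(w)=\mathcal N$ with $|\mathcal N|=k$. The event $t_G(w)=0$ has probability $(1-p)^{\binom k2}\ge 1-k^2p=1-o(1)$, since $p=o(1)$ and $\avdeg^2p\to0$ when $\avdeg=O(\log n)$. If instead $\avdeg\gg\log n$, the hypothesis of the lemma fails, because $I(\alpha)>0$ forces $\avdeg\le \log n/I(\alpha)$.

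For the harmonic condition, we redo Step 1 of \Cref{lem:sparse-uniform-estimates}(iii). The variables $q_x-1$ for $x\in\mathcal N$ are independent $\operatorname{Bin}(n-k-1,p)$. Given $t_G(w)=0$, we have $d_G(x)=q_x$, since there are no edges inside $\mathcal N$; a neighbour of $x$ in $\mathcal N$ would create a triangle. So $h_G(w)=\frac{\avdeg}{k}\sum_{x\in\mathcal N}1/q_x$ is an average of $k$ i.i.d.\ bounded-ish variables with mean $1+O(\avdeg^{-1/2})$. We truncate at $q_x\ge \avdeg/2$, which fails with probability $e^{-\Omega(\avdeg)}$ each, and handle $q_x$ small by bounding $1/q_x\le1$. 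Then Chebyshev or Bernstein gives deviation $\le 2\avdeg^{-1/3}$ with probability $1-o(1)$, since the variance of the average is $O(1/\avdeg)$. Hence $\E X=(1-o(1))\,n\,b_{n-1}(k)\to\infty$. The edge case is analogous: condition on both neighbourhoods, which are disjoint with $u\sim v$, and use $H$-degrees.

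\textbf{Second moment.} For distinct $w,w'$, we need $\Pp(w,w'\text{ good})\le(1+o(1))\Pp(w\text{ good})^2$, up to an additive term summing to $o((\E X)^2)$. Condition on the status of the pair $ww'$ and on the common-neighbour count. The degrees of $w,w'$ are then nearly independent binomials, and the ratio $b_{n-2}(k-1)/b_{n-1}(k)=O(k/\avdeg)=O(1)$ only matters when $w\sim w'$, which has probability $p$. The harmonic events involve neighbourhoods that overlap in $o(\avdeg)$ vertices whp, and the $q_x$ statistics are nearly independent. This step is the main obstacle: correlations via neighbours of neighbours. I would handle it by exposing only the edges at $w,w'$ and the edges from their neighbours to the rest, noting that shared second neighbours change each $q_x$ by at most $O(1)$ in expectation. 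Pairs of edges sharing a vertex in (ii) contribute at most $n\avdeg^2 b^2=o((\E Y)^2)$ by a direct count, since $(\E Y)^2\gtrsim n^2\avdeg^2 b^4$ and $n b^2\ge n^{\delta}$.

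\textbf{Nonadjacency in (i).} Among the $X\ge n^{\delta/2}$ good vertices, the expected number of pairs at distance at most $2$ that are both good is at most $n^2(p+\avdeg^2/n)\,b^2(1+o(1))$. This is $o((\E X)^2)$, since $p+\avdeg^2/n=o(1)$. So whp some pair of good vertices is nonadjacent with no common neighbour.
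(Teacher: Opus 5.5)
\textbf{The gap is the second-moment step, which you flag as the main obstacle but do not prove.} Your outline is otherwise sound: Stirling/Poisson for $b_{n-1}(k)$, a conditional analysis showing that a degree-$k$ vertex or edge is good with probability $1-o(1)$, a second moment, and a first-moment count of close pairs. The first-moment and separation steps are fine. What is missing is a proof of $\Pp(w,w'\text{ good})\le(1+o(1))\Pp(w\text{ good})^2$. The analogous bound for disjoint pairs of edges in (ii) is also missing; you only treat pairs sharing a vertex. Your sketch, that shared second neighbours change each $q_x$ by $O(1)$ in expectation, is a heuristic rather than a proof. A bound in expectation does not control the joint probability of two deviation events. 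Moreover, the families $\{q_x\}_{x\in N(w)}$ and $\{q_{x'}\}_{x'\in N(w')}$ genuinely share randomness, through the pairs $xx'$ and through neighbours of $w$ adjacent to $w'$.

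\textbf{The missing idea is that no decorrelation is needed.} The second moment method only requires an upper bound on $\E X^2$, and goodness is contained in the degree event. Conditioning on whether $ww'$ is an edge gives
\[
\Pp(w,w'\text{ good})\le\Pp\bigl(d_G(w)=d_G(w')=k\bigr)=\Pp\bigl(d_G(w)=k\bigr)^2+p(1-p)(A-B)^2=(1+O(p))\,\Pp\bigl(d_G(w)=k\bigr)^2,
\]
where $A=b_{n-2}(k-1)$ and $B=b_{n-2}(k)$. Your first-moment computation already gives $\Pp(w\text{ good})=(1-o(1))\Pp(d_G(w)=k)$. Together these yield $\E X^2\le\E X+(1+o(1))(\E X)^2$.

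In (ii), let $E_k$ be the set of edges whose endpoints both have degree $k$. For disjoint $e,f$, bound $\Pp(e,f\text{ good})$ by $\Pp(e,f\in E_k)$. Conditioning on the four cross pairs shows this is $(1+O(p))\Pp(e\in E_k)^2$.

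This is exactly the paper's route. It applies Chebyshev only to the degree counts $|V_k|$ (vertices of degree $k$) and $|E_k|$, whose covariances are explicit. It then uses Markov's inequality to show that only $o(\E|V_k|)$, resp.\ $o(\E|E_k|)$, of these fail the triangle and neighbour-degree conditions. The paper calls a vertex good if every neighbour has degree within $\avdeg^{2/3}$ of $\avdeg$. That definition also gives the pair at distance at least $3$ by a deterministic pigeonhole argument; your first-moment count of close pairs works equally well there.
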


\begin{proof}
Since $I(\alpha)>0$, either hypothesis gives $\avdeg\le\log n/I(\alpha)$.
Thus $\avdeg=O(\log n)$,
$p=O(\log n/n)$, and $k\to\infty$. 
(Recall that we are assuming $\avdeg-\log n\to\infty$; hence, we in fact have $\avdeg=\Theta(\log n)$.) 

For integers $N\ge j\ge1$ we have the
identities
\begin{equation}\label{eq:sparse-binomial-ratios}
  \frac{b_N(j-1)}{b_N(j)}=\frac{j(1-p)}{(N-j+1)p},
  \qquad
  \frac{b_{N-1}(j)}{b_N(j)}=\frac{N-j}{N(1-p)}.
\end{equation}
For $N=n-O(1)$ and $j=k-O(1)$, the first ratio tends to $\alpha$ and the
second tends to $1$. Hence, $b_{n-r}(k-s)/b_{n-1}(k)\to\alpha^s$ for all fixed
integers $r\ge1$ and $s$. 

We now estimate $b_{n-1}(k)$ in terms of $I(\alpha)$ and the expected degree $\avdeg$. 
We first approximate it using the
Poisson distribution $\operatorname{Po}(Np)$, cf.~\cite[Chapter~VI]{Feller1968}. 
Write $\operatorname{po}_{Np}(j):=\Pp(\operatorname{Po}(Np)=j)
=e^{-Np}(Np)^j/j!$, and note that
\[
  b_N(j)
  =\frac{N(N-1)\cdots(N-j+1)}{j!}\,p^j(1-p)^{N-j}
  =\operatorname{po}_{Np}(j)\cdot
  e^{Np}(1-p)^{N-j}\cdot
   \prod_{i=0}^{j-1}\Bigl(1-\frac iN\Bigr).
\]
Since $\prod_i(1-x_i)\ge1-\sum_ix_i$ whenever all $x_i\in[0,1]$, the
product over $i$ lies between $1-j(j-1)/(2N)\ge1-j^2/(2N)$ and $1$. 
As $p=o(1)$, for $n$ large enough we have that $0\leq p\leq\frac{1}{2}$.
Since, in this case, $-p-p^2\le\log(1-p)\le-p$, 
it holds that
$e^{-Np^2}\le e^{Np}(1-p)^N\le e^{Np}(1-p)^{N-j}\le e^{jp}$. Consequently,
\begin{equation}\label{eq:sparse-poisson-comparison}
  \Bigl(1-\frac{j^2}{2N}\Bigr)e^{-Np^2}
  \le\frac{b_N(j)}{\operatorname{po}_{Np}(j)}
  \le e^{jp}.
\end{equation}
We now apply \eqref{eq:sparse-poisson-comparison} with
$N=n-1$ and $j=k$, so that $Np=\avdeg$. Since $k\le\avdeg$, each of
$k^2/(2N)$, $Np^2$, and $kp$ is at most $\avdeg^2/(n-1)=O((\log n)^2/n)$,
which tends to $0$. Therefore,
\begin{align}
\label{eqn_2202_2909}
  b_{n-1}(k)=\bigl(1+o(1)\bigr)\operatorname{po}_{\avdeg}(k).
\end{align}
 
The problem is now to estimate $\operatorname{po}_{\avdeg}(k)$. To that end, we use Stirling's formula with
the explicit error bounds of Robbins~\cite{robbins1955remark} (see
also~\cite[Chapter~II]{Feller1968}): for every integer $m\ge1$,
\begin{equation}\label{eq:stirling-robbins}
  \sqrt{2\pi m}\,\Bigl(\frac me\Bigr)^m e^{1/(12m+1)}
  <m!<
  \sqrt{2\pi m}\,\Bigl(\frac me\Bigr)^m e^{1/(12m)}.
\end{equation}
By~\eqref{eq:sparse-rate-function},
$\avdeg-k-k\log(\avdeg/k)=\avdeg I(k/\avdeg)$. Applying
\eqref{eq:stirling-robbins} with $m=k$, and using $e^{1/(12k+1)}>1$, 
we obtain
\begin{align}
\label{eqn_2203_b_2909}
  \frac{e^{-1/(12k)}}{\sqrt{2\pi k}}\,e^{-\avdeg I(k/\avdeg)}
  <\operatorname{po}_{\avdeg}(k)=\frac{e^{-\avdeg}\avdeg^k}{k!}
  <\frac{1}{\sqrt{2\pi k}}\,e^{-\avdeg I(k/\avdeg)}.
\end{align}
Finally, we replace $k/\avdeg$ by $\alpha$. We have
$\alpha\avdeg-1<k\le\alpha\avdeg$ and, once $\avdeg\ge2/\alpha$, also
$k\ge\alpha\avdeg/2$; thus $\pi\alpha\avdeg\le2\pi k\le2\pi\alpha\avdeg$.
Moreover, $I$ is decreasing on $(0,1]$, and $|I'(x)|=|\log x|\le\log(2/\alpha)$
for $x\in[\alpha/2,\alpha]$. The mean value theorem therefore gives
\begin{align}
\label{eqn_eqn_2203_c_2909}
  0\le\avdeg\bigl(I(k/\avdeg)-I(\alpha)\bigr)
  \le(\alpha\avdeg-k)\log\frac2\alpha
  \le\log\frac2\alpha.
\end{align}
Combining~\cref{eqn_2202_2909},~\cref{eqn_2203_b_2909}, and~\cref{eqn_eqn_2203_c_2909}, we conclude that, for all sufficiently large $n$,
\begin{equation}\label{eq:sparse-local-degree}
  \frac{\sqrt\alpha}{8}\,\avdeg^{-1/2}e^{-\avdeg I(\alpha)}
  \le b_{n-1}(k)\le
  \frac{1}{\sqrt\alpha}\,\avdeg^{-1/2}e^{-\avdeg I(\alpha)};
\end{equation}
in particular, 
\begin{align}
\label{eqn_1928_3009}
b_{n-1}(k)=\Theta\bigl(\avdeg^{-1/2}e^{-\avdeg I(\alpha)}\bigr),
\end{align}
with implicit constants depending only on $\alpha$.
 
\emph{Part~(i).} Let $V_k$ be the set of vertices of degree $k$. By
\eqref{eq:sparse-local-degree}, the hypothesis that $n\exp(-\avdeg I(\alpha))\ge n^\delta$ for all large $n$, and $\avdeg=O(\log n)$, we have that
\begin{align}
\label{eqn_1722_3009}
\mathbb E|V_k|
=n b_{n-1}(k)
\ge \frac{\sqrt\alpha}{8}\bar d^{-1/2}
n e^{-\bar d I(\alpha)}
\ge \frac{\sqrt\alpha}{8}\bar d^{-1/2}n^\delta
\geq n^{\delta/2}
\end{align}
for all large $n$. Now pick two vertices $u\ne v$.
Conditioning on whether $uv\in E(G)$ makes the remaining degrees of $u$
and $v$ independent $\operatorname{Bin}(n-2,p)$ variables. So, with
$A:=b_{n-2}(k-1)$ and $B:=b_{n-2}(k)$,
\[
  \Pp(d_G(u)=k)=pA+(1-p)B
  \qquad\text{and}\qquad
  \operatorname{Cov}\bigl(\one_{\{d_G(u)=k\}},\one_{\{d_G(v)=k\}}\bigr)
  =p(1-p)(A-B)^2.
\]
Since $A,B=\Theta(b_{n-1}(k))$, the covariance is $O(p\,b_{n-1}(k)^2)$.
To bound the variance of $|V_k|$, write
$X_v:=\one_{\{d_G(v)=k\}}$, so that $|V_k|=\sum_vX_v$.
Since each $X_v$ is an indicator,
$\operatorname{Var}(X_v)=\E X_v(1-\E X_v)\le\E X_v$.
Therefore,
\begin{align*}
  \operatorname{Var}|V_k|
  &=
  \sum_v\operatorname{Var}(X_v)
  +\sum_{u\ne v}\operatorname{Cov}(X_u,X_v)
  \le
  \sum_v\E X_v+O\bigl(n^2p\,b_{n-1}(k)^2\bigr)
  =
  \E|V_k|+O(p)\bigl(\E|V_k|\bigr)^2,
\end{align*}
where we used that $\E|V_k|=n b_{n-1}(k)$.
Since $\E|V_k|\ge n^{\delta/2}\to\infty$ by~\cref{eqn_1722_3009} and
$p=O(\log n/n)\to0$, we obtain
\[
  \frac{\operatorname{Var}|V_k|}{(\E|V_k|)^2}
  \le \frac1{\E|V_k|}+O(p)=o(1).
\]
Thus $\operatorname{Var}|V_k|=o\bigl((\E|V_k|)^2\bigr)$.
By Chebyshev's inequality, whp
\begin{align}
\label{eqn_1849_30_09}
|V_k|\ge\frac12\E|V_k|.
\end{align}

Set
$\vartheta:=\avdeg^{-1/3}$, and
call a vertex $w$ \emph{good} if $d_G(w)=k$, $t_G(w)=0$, and
$|d_G(w')-\avdeg|\le\vartheta\avdeg$ for every $w'\sim w$. Conditionally on
$d_G(w)=k$, the neighbourhood $N_G(w)$ is a uniformly random $k$-subset
$\mathcal N$ of $V\setminus\{w\}$, independent of the edges not containing
$w$. Hence, by the union bound,
$\Pp(t_G(w)>0\mid d_G(w)=k)\le\binom k2p=O(\avdeg^2p)=o(1)$.

Condition, in addition, on $N_G(w)=\mathcal N$ and $t_G(w)=0$.
Then each $w'\in\mathcal N$ has degree $d_G(w')=1+Y_{w'}$ with
$Y_{w'}\sim\operatorname{Bin}(n-k-1,p)$, and $|\E d_G(w')-\avdeg|\le1$ for large enough $n$.
For each $w'\in\mathcal N$, put
$\mu:=\E Y_{w'}=(n-k-1)p\le\avdeg$.
The triangle inequality gives
\[
  |d_G(w')-\avdeg|
  \le |d_G(w')-\E d_G(w')|
      +|\E d_G(w')-\avdeg|
  \le |Y_{w'}-\mu|+1.
\]
Consequently,
applying \eqref{eq:sparse-bernstein} to $Y_{w'}$ with $t=\vartheta\avdeg-1$,

\begin{align*}
  \Pp\bigl(|d_G(w')-\avdeg|>\vartheta\avdeg\bigr)
  &\le
  \Pp\bigl(|Y_{w'}-\mu|>\vartheta\avdeg-1\bigr)\\
  &\le
  2\exp\left(
    -\frac{(\vartheta\avdeg-1)^2}
          {2(\mu+(\vartheta\avdeg-1)/3)}
  \right)\\
  &\le 2\exp(-\vartheta^2\avdeg/3)
\end{align*}
for all sufficiently large $n$.
The last inequality follows from $\mu\le\avdeg$ and
\begin{align}
\label{eqn_1303_0210}
  \frac{(\vartheta\avdeg-1)^2}
       {2(\mu+(\vartheta\avdeg-1)/3)}
  \ge
  \vartheta^2\avdeg\,
  \frac{(1-1/(\vartheta\avdeg))^2}
       {2(1+\vartheta/3)}
  \ge \frac{\vartheta^2\avdeg}{3},
\end{align}
since the fraction in the middle expression tends to $1/2$ as
$\vartheta\to0$ and $\vartheta\avdeg=\avdeg^{2/3}\to\infty$.
A union bound over the $k$ vertices of $\mathcal N$ now gives
\begin{align*}
  &\Pp\bigl(\exists w'\in\mathcal N:
       |d_G(w')-\avdeg|>\vartheta\avdeg\bigr)
  \le
  2k\exp(-\vartheta^2\avdeg/3)
  =2k\exp(-\avdeg^{1/3}/3)
  =o(1).
\end{align*}

Let $Z$ be the number of vertices in $V_k$ that are not good.
The above argument shows that
$\Pp(w\text{ is good}\mid w\in V_k)=1-o(1)$, so $\E Z=o(\E|V_k|)$. 
Markov's inequality then gives
\[
  \Pp\left(Z>\frac14\E|V_k|\right)
  \le \frac{4\E Z}{\E|V_k|}
  =o(1).
\]
Thus, whp, at most $\frac14\E|V_k|$ vertices in $V_k$ are not good.
We also know from~\cref{eqn_1849_30_09} that
$|V_k|\ge\frac12\E|V_k|$ whp.
On the intersection of these events (which happens whp), the number of good vertices is
\[
  |V_k|-Z
  \ge \frac12\E|V_k|-\frac14\E|V_k|
  =\frac14\E|V_k|
  \ge\frac14n^{\delta/2},
\]
where we have used~\cref{eqn_1722_3009}.

Now, a good vertex $w$ satisfies
\[
  \frac1{1+\vartheta}\le h_G(w)=\frac\avdeg k\sum_{w'\sim w}\frac1{d_G(w')}
  \le\frac1{1-\vartheta},
\]
so $|h_G(w)-1|\le2\vartheta=2\avdeg^{-1/3}$. Moreover, at most
$\sum_{w'\sim w}d_G(w')\le k(1+\vartheta)\avdeg\le\avdeg^2$ vertices are at
distance $1$ or $2$ from a good vertex $w$. Since $\frac14n^{\delta/2}>\avdeg^2+1$ for
all large $n$, there must exist two good vertices at distance at least $3$.
These two vertices witness that condition (i) is satisfied.
 
\emph{Part~(ii).} Let $E_k$ be the set of edges $uv$ of $G$ with
$d_G(u)=d_G(v)=k$. Conditionally on $uv\in E(G)$, the degrees $d_G(u)-1$
and $d_G(v)-1$ are independent $\operatorname{Bin}(n-2,p)$ variables. Put
$\beta:=b_{n-2}(k-1)=\Theta(b_{n-1}(k))$ (where we have used the argument at the beginning of the proof). Then, by~\cref{eqn_1928_3009},
\[
  \E|E_k|=\binom n2p\beta^2=\Theta\bigl(ne^{-2\avdeg I(\alpha)}\bigr)
  \ge n^{\delta/2}
\]
for all large $n$, by the hypothesis of (ii).
The hypothesis also gives
$e^{-\avdeg I(\alpha)}\ge n^{-(1-\delta)/2}$, hence
$n\beta=\Theta(n\avdeg^{-1/2}e^{-\avdeg I(\alpha)})\ge n^{1/2}$ for all
large $n$.
 
We now bound $\operatorname{Var}|E_k|$.
For every $e\in\binom{[n]}2$, let
$X_e:=\mathbf1_{\{e\in E_k\}}$. Then $|E_k|=\sum_eX_e$, and 
\begin{align}
\label{eqn_1055_0210}
\notag
  \operatorname{Var}|E_k|
  &=\sum_e\operatorname{Var}(X_e)
    +\sum_{e\ne f}\operatorname{Cov}(X_e,X_f)\\
  &=\sum_e\operatorname{Var}(X_e)
    +\sum_{\substack{e\ne f\\|e\cap f|=1}}
      \operatorname{Cov}(X_e,X_f)
    +\sum_{\substack{e,f\\e\cap f=\varnothing}}
      \operatorname{Cov}(X_e,X_f).
\end{align}
Since the $X_e$ are indicator variables, we have
$\operatorname{Cov}(X_e,X_f)
  =\Pp(e\in E_k,\ f\in E_k)
   -\Pp(e\in E_k)\Pp(f\in E_k)$.
The diagonal terms satisfy
\[
  \sum_e\operatorname{Var}(X_e)
  =\sum_e\Pp(e\in E_k)\bigl(1-\Pp(e\in E_k)\bigr)
  \le\sum_e\Pp(e\in E_k)
  =
  \sum_e\E X_e
  =\E|E_k|.
\]
We now estimate the remaining terms in~\cref{eqn_1055_0210} separately, according to whether
the two candidate edges $e,f$ share one vertex or are disjoint.

First, let $u,v,w$ be distinct vertices.
Conditioning on whether $vw\in E(G)$ gives
\begin{align*}
\Cov(X_{uv},X_{uw})&\leq
  \Pp(uv\in E_k,\ uw\in E_k)\\
  &=p^2\,b_{n-3}(k-2)\bigl[(1-p)\,b_{n-3}(k-1)^2+p\,b_{n-3}(k-2)^2\bigr]\\
  &=O(p^2\beta^3).
\end{align*}
There are at most $n^3$ such ordered pairs, contributing
$O(n^3p^2\beta^3)=O((\E|E_k|)^2/(n\beta))$ to $\operatorname{Var}|E_k|$. 

Second, let $\{u,v\}$ and
$\{w,w'\}$ be disjoint, and condition on the four pairs between them. The
remaining degrees of $u,v,w,w'$ into the other $n-4$ vertices are then
independent $\operatorname{Bin}(n-4,p)$ variables. Write
$R_j:=b_{n-4}(k-1-j)$. The configuration without cross edges contributes
$p^2(1-p)^4R_0^4$ to $\Pp(uv\in E_k,\ ww'\in E_k)$. Each of the other
fifteen configurations contributes $O(p^3R_0^4)$, because $R_1,R_2=O(R_0)$ by the argument at the beginning of this proof.
Also, using~\cref{eq:sparse-binomial-ratios}, we get
\[
\frac{\beta}{R_0}
=
\frac{b_{n-2}(k-1)}{b_{n-3}(k-1)}
\frac{b_{n-3}(k-1)}{b_{n-4}(k-1)}
=(1+O(p))^2
=1+O(p)
\]
and, thus, 
$\beta=
R_0(1+O(p))$. Hence,
\begin{align*}
\Cov(X_{uv},X_{ww'})&=
  \Pp(uv\in E_k,\ ww'\in E_k)
   -\Pp(uv\in E_k)\,\Pp(ww'\in E_k)\\
  &
  =p^2(1-p)^4R_0^4+O(p^3R_0^4)-p^2\beta^4\\
  &
  =p^2R_0^4(1+O(p))-p^2\beta^4\\
  &
  =p^2\beta^4(1+O(p))-p^2\beta^4\\
  &
  =O(p^3\beta^4).
\end{align*}
Therefore, the at most $n^4$ such ordered pairs contribute $O(p)(\E|E_k|)^2$ to $\operatorname{Var}|E_k|$.

Altogether,~\cref{eqn_1055_0210} gives
\[
  \frac{\operatorname{Var}|E_k|}{(\E|E_k|)^2}
  =O\Bigl(\frac1{\E|E_k|}+\frac1{n\beta}+p\Bigr)=o(1).
\]
We now apply Chebyshev’s inequality to find that
\begin{align*}
    \Pp\left(|E_k|<\frac12\E|E_k|\right)
    \leq 
    \Pp\left(\big||E_k|-\E|E_k|\big|\geq \frac12\E|E_k|\right)
    \leq
    \frac{4\Var|E_k|}{(\E|E_k|)^2}=o(1),
\end{align*}
so $|E_k|\ge\frac12\E|E_k|$ whp.
 
It remains to show that a typical edge of $E_k$ has the
properties required by (ii). Conditional on $uv\in E_k$, the sets
$\mathcal N_u:=N_G(u)\setminus\{v\}$ and
$\mathcal N_v:=N_G(v)\setminus\{u\}$ are independent uniformly random
$(k-1)$-subsets of $V\setminus\{u,v\}$. The pairs inside
$V\setminus\{u,v\}$ are still present independently with probability $p$.
A triangle through $u$ or $v$ has one of three forms: $uvw$ with
$w\in\mathcal N_u\cap\mathcal N_v$; $uww'$ with $ww'\in E(G[\mathcal N_u])$;
or $vww'$ with $ww'\in E(G[\mathcal N_v])$. Hence the conditional
probability that $u$ or $v$ lies in a triangle is at most
\[
  \E|\mathcal N_u\cap\mathcal N_v|
  +\E e(G[\mathcal N_u])+\E e(G[\mathcal N_v])
  \le\frac{k^2}{n-2}+k^2p=o(1),
\]
where $e(-)$ denotes the number of edges.
Let $H=G-uv$. For $w\in\mathcal N_u\cup\mathcal N_v$ we have
\[
  d_H(w)=d_G(w)
  =\one_{\{w\in\mathcal N_u\}}+\one_{\{w\in\mathcal N_v\}}+Y_w,
\]
where $Y_w\sim\operatorname{Bin}(n-3,p)$ counts the edges from $w$ to
$V\setminus\{u,v,w\}$. Thus $|\E d_H(w)-\avdeg|\le2$.

For fixed $\mathcal N_u$ and $\mathcal N_v$, put
$\mu:=\E Y_w=(n-3)p$. Since $d_H(w)-Y_w$ is then deterministic,
we have $d_H(w)-\E d_H(w)=Y_w-\mu$. The triangle inequality therefore gives
\begin{align*}
  |d_H(w)-\avdeg|
  &\le |d_H(w)-\E d_H(w)|+|\E d_H(w)-\avdeg|
  \le |Y_w-\mu|+2.
\end{align*}
Consequently, proceeding as in the proof of part~(i), we apply
\eqref{eq:sparse-bernstein} with $t=\vartheta\avdeg-2$ to find
\begin{align*}
  \Pp\bigl(|d_H(w)-\avdeg|>\vartheta\avdeg\bigr)
  &\le \Pp\bigl(|Y_w-\mu|>\vartheta\avdeg-2\bigr)\\
  &\le
  2\exp\left(
    -\frac{(\vartheta\avdeg-2)^2}
          {2(\mu+(\vartheta\avdeg-2)/3)}
  \right)\\
  &\le 2\exp(-\avdeg^{1/3}/3)
\end{align*}
for all sufficiently large $n$ (where the last inequality is proved using the same argument as in~\cref{eqn_1303_0210}).
A union bound over the fewer than $2k$ vertices of
$\mathcal N_u\cup\mathcal N_v$ shows that the probability that some
such vertex $w$ satisfies $|d_H(w)-\avdeg|>\vartheta\avdeg$ is at most
$4k\exp(-\avdeg^{1/3}/3)=o(1)$.
This bound is uniform in the fixed neighbourhoods, so averaging over
them gives the same bound conditional only on $uv\in E_k$.

Hence, whp every
$w\in\mathcal N_u\cup\mathcal N_v$ satisfies
$(1-\vartheta)\avdeg\le d_H(w)\le(1+\vartheta)\avdeg$ and, thus,
\begin{align*}
  \frac1{1+\vartheta}
  \le\frac{\avdeg}{d_H(w)}
  \le\frac1{1-\vartheta}.
\end{align*}
Since $d_H(u)=|\mathcal N_u|=k-1$,
averaging these bounds over $w\in\mathcal N_u$ gives
\[
  \frac1{1+\vartheta}
  \le h_H(u)
  =\frac1{k-1}\sum_{w\in\mathcal N_u}\frac{\avdeg}{d_H(w)}
  \le\frac1{1-\vartheta}.
\]
Thus, for all sufficiently large $n$,
\[
  |h_H(u)-1|
  \le\frac{\vartheta}{1-\vartheta}
  \le2\vartheta,
\]
where the last inequality uses $\vartheta\le1/2$.
The same argument applies to $v$.

Consequently, the expected
number of edges of $E_k$ that fail one of the required properties of (ii) is
$o(\E|E_k|)$. Markov's inequality and $|E_k|\ge\frac12\E|E_k|$ then show
that, whp, some edge of $G$ satisfies all of them.
\end{proof}

We shall also need the following simple observation.
 
\begin{lemma}\label[lemma]{lem:sparse-barriers}
Fix $\varepsilon>0$.
\begin{enumerate}[label=\textnormal{(\roman*)},leftmargin=2.2em]
  \item If $\avdeg\ge(2C^*+\varepsilon)\log n$ for all large $n$, then there
  is $\eta=\eta(\varepsilon)>0$ such that, whp,
  $\min_vd_G(v)>(\frac12+\eta)\avdeg$.
  \item If $\avdeg\ge(C^*+\varepsilon)\log n$ for all large $n$, then there
  are $c=c(\varepsilon)>0$ and $\eta=\eta(\varepsilon)>0$ such that, whp,
  $\min_vd_G(v)\ge c\avdeg$ and $d_G(u)+d_G(v)>(1+\eta)\avdeg$ for every
  edge $uv$ of $G$.
\end{enumerate}
\end{lemma}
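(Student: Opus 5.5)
Both parts reduce to first-moment (union-bound) estimates using the Chernoff lower tail \eqref{eq:sparse-chernoff} with the rate function $I$ from \eqref{eq:sparse-rate-function}. The key facts are that $I$ is continuous and strictly decreasing on $(0,1]$, and that $2I(1/2)=1-\log 2=1/C^*$.

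\emph{Part (i).} Since $\avdeg\ge(2C^*+\varepsilon)\log n$, we have $I(1/2)\avdeg\ge (1+\varepsilon/(2C^*))\log n$. By continuity of $I$ at $1/2$, we choose $\eta>0$ small enough that
\[
I(\tfrac12+\eta)(2C^*+\varepsilon)\ge 1+\varepsilon',
\]
for some $\varepsilon'>0$ depending only on $\varepsilon$. Each degree is $\operatorname{Bin}(n-1,p)$ with mean $\avdeg$, so $\Pp(d_G(v)\le(\frac12+\eta)\avdeg)\le e^{-\avdeg I(1/2+\eta)}\le n^{-1-\varepsilon'}$. A union bound over the $n$ vertices then gives failure probability at most $n^{-\varepsilon'}\to0$.

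\emph{Part (ii).} For the minimum degree we take $c>0$ so small that $I(c)\ge 2$; this is possible because $I(x)\to1$ as $x\to0$... which is not enough, so we argue differently. Since $\avdeg\ge (C^*+\varepsilon)\log n\ge 3\log n$ and $C^*>3$, it suffices to have $I(c)(C^*+\varepsilon)>1$, i.e.\ $I(c)>1/C^*\approx0.307$. This holds for all small $c$ because $I(c)\to1$. The union bound then gives $\min_v d_G(v)\ge c\avdeg$ whp.

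For the edge condition, we fix $\eta>0$ small and bound the expected number of edges $uv$ with $d_G(u)+d_G(v)\le(1+\eta)\avdeg$. Conditioning on $uv\in E(G)$, we have $d_G(u)+d_G(v)=2+Y$ with $Y\sim\operatorname{Bin}(2(n-2),p)$ and mean $2\avdeg(1-O(1/n))$. The event then requires $Y\le(1+\eta)\avdeg$, i.e.\ $Y\le x\,\E Y$ with $x=\frac{1+\eta}{2}(1+o(1))$. Chernoff gives probability at most $\exp(-2\avdeg(1-o(1)) I(x))$. The expected count is therefore at most
\[
\binom n2 p\, e^{-2\avdeg I(\frac{1+\eta}{2})(1-o(1))}\le n\avdeg\, e^{-2\avdeg I(\frac{1+\eta}2)(1-o(1))}.
\]
Taking $\eta$ small so that $2I(\frac{1+\eta}{2})(C^*+\varepsilon)\ge 1+\varepsilon'$, this is at most $\avdeg\, n^{-\varepsilon'/2}$.

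\emph{Main obstacle.} The only delicate point is the polylogarithmic factor $\avdeg$ when $\avdeg$ is large. If $\avdeg\ge n^{\varepsilon'/4}$, the exponential factor $e^{-\Omega(\avdeg)}$ swamps everything anyway. Otherwise $\avdeg\le n^{\varepsilon'/4}$ and the bound is $o(1)$. The $(1-o(1))$ factors coming from $n-2$ versus $n-1$ and the additive $2$ are absorbed by the slack $\varepsilon'$. Markov's inequality then completes (ii).
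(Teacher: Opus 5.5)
Your proposal is correct and follows the paper's argument essentially step for step: Chernoff lower tail with rate $I$, continuity of $I$ at $1/2$ (resp.\ at $0^+$) to choose $\eta$ (resp.\ $c$), a union bound over vertices, and a first-moment bound over edges using $d_G(u)+d_G(v)-2\sim\operatorname{Bin}(2(n-2),p)$ given $uv\in E(G)$. The differences are cosmetic. The paper avoids your case split on the size of $\avdeg$ by using $n\le e^{\avdeg/(C^*+\varepsilon)}$ to reduce the union bound directly to $e^2\avdeg\, e^{-(\gamma-1)\avdeg/(C^*+\varepsilon)}=o(1)$, and it handles the $n-2$ versus $n-1$ correction by monotonicity with an auxiliary $\eta'>\eta$ rather than through $(1-o(1))$ factors. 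You should also delete the abandoned ``$I(c)\ge2$'' false start, which is impossible since $I\le1$ on $(0,1]$.
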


\begin{proof}
(i) Since $2C^*I(\frac12)=1$ and $I$ is continuous, we can fix
$\eta\in(0,\frac12)$ such that $\gamma:=(2C^*+\varepsilon)I(\frac12+\eta)>1$.
By \eqref{eq:sparse-chernoff} and a union bound,
\[
  \Pp\Bigl(\min_vd_G(v)\le\bigl(\tfrac12+\eta\bigr)\avdeg\Bigr)
  \le n\,e^{-\avdeg I(\frac12+\eta)}
  \leq n\,e^{-\gamma\log n}
  = n^{1-\gamma}\to0,
\]
where the second inequality uses the assumption on $\avdeg$.
 
(ii) Since $I(0^+)=1<C^*+\varepsilon$, we can fix $c\in(0,1)$ with
$(C^*+\varepsilon)I(c)>1$. The argument of~(i) then gives
$\min_vd_G(v)> c\avdeg$ whp. Next, fix $0<\eta<\eta'<\frac12$ such that
$\gamma:=2(C^*+\varepsilon)I(\frac{1+\eta'}2)>1$. Conditionally on
$uv\in E(G)$, the random variable $d_G(u)+d_G(v)-2$ is
$\operatorname{Bin}(2(n-2),p)$. 
For all large $n$,
its mean $\mu=2\avdeg\frac{n-2}{n-1}$
satisfies $\mu\ge2\avdeg-2$ and $(1+\eta)\avdeg\le\frac{1+\eta'}2\mu$. By using \eqref{eq:sparse-chernoff} and the fact that $I$ has values $\leq 1$ on $(0,1]$, we find
\begin{align*}
  &\Pp\bigl(d_G(u)+d_G(v)\le(1+\eta)\avdeg\bigm|uv\in E(G)\bigr)
  \le e^{-\mu I(\frac{1+\eta'}2)}\\
  &
\le e^{-(2\bar d-2)I(\frac{1+\eta'}2)}=e^{2I(\frac{1+\eta'}2)}
  e^{-2\bar d I(\frac{1+\eta'}2)}
\le e^2\,e^{-2\bar d I(\frac{1+\eta'}2)}=e^2\,e^{-\gamma\bar d/(C^*+\varepsilon)}.
\end{align*}

Since $\avdeg\ge(C^*+\varepsilon)\log n$ by assumption for large $n$, we have
$n\le e^{\avdeg/(C^*+\varepsilon)}$.
Take now a union bound over the $\binom n2$ pairs. The probability that some
edge violates the degree-sum bound is at most
\begin{align*}
e^2\binom n2p\,e^{-\gamma\avdeg/(C^*+\varepsilon)}
\le e^2\,n\avdeg\,e^{-\gamma\avdeg/(C^*+\varepsilon)}
\le e^2\,\avdeg\,e^{-(\gamma-1)\avdeg/(C^*+\varepsilon)}
=o(1),
\end{align*}
as $\gamma>1$ and $\avdeg\to\infty$. 
\end{proof}

\subsection{Proofs of the two threshold theorems}
We finally have all ingredients to prove our main threshold results for Kemeny--Braess edges and nonedges. 
\begin{proof}[Proof of \Cref{thm:edge-threshold}]

For part~(i), the upper bound on $\avdeg$ forces
$p=O(\log n/n)=o(1)$.  We may therefore work on the events in
\Cref{lem:sparse-uniform-estimates}. 
In particular,  $G$ is connected
whp.
Since $2C^*I(1/2)=1$ (and, thus,
$2(C^*-\varepsilon)I(1/2)<1$),
continuity of $I$ permits us to choose
$0< \alpha<1/2$ sufficiently close to $1/2$ such that
\[
  2(C^*-\varepsilon)I(\alpha)<1.
\]
The assumed upper bound on $\avdeg$ then implies
\[
  n\exp\bigl(-2\avdeg I(\alpha)\bigr)\ge n^\delta
\]
for some fixed $\delta>0$.  By
\Cref{lem:sparse-low-degree-reservoirs}(ii), whp there is
an edge $e=uv$ with
\[  d_G(u)=d_G(v)=\lfloor\alpha\avdeg\rfloor
\]
such that, for $H=G-e$,  the harmonic neighbour statistics of both endpoints $u,v$ satisfy
\[
h_H(u),h_H(v) = 1+o(1)
\]
and neither endpoint belongs to a triangle.

We now verify that $H$ is connected.
The deterministic one-edge
calculation from the proof of \Cref{prop:sparse-addition}, now read
backwards, taking $X=H$ and $X^+=G$ 
in \eqref{eq:edge-deletion-S-bound}, \eqref{eq:edge-deletion-Pi-bound}, and \eqref{eq:edge-deletion-B_X+-bound}, gives
\[
  \nucnorm{S_H-S_G}=O(\avdeg^{-1}),
  \qquad
  \nucnorm{\Pi_H-\Pi_G}=O(\avdeg^{-1}),
\]
and
\[
  \opnorm{B_H}
  \le\opnorm{B_G}+O(\avdeg^{-1})
  =O(\avdeg^{-1/2})<1.
\]
Thus, $H$ is connected (see Glossary~\ref{glo:op}).

The right-hand side of~\cref{eq:sparse-cubic-exact} with $X=H$ for the pair  $u,v$ vanishes, and the Kemeny-tameness hypothesis
of \Cref{cor:sparse-deletion} holds.  Consequently,
\[
  \avdeg^2\bigl(\K(G-e)-\K(G)\bigr)
  =
  \frac{2(2\alpha-1)}{\alpha^2}+o(1)<0.
\]

Summing up, $H=G-e$ is connected and $\K(G-e)-\K(G)<0$. 
Thus $e$ is a $\kappa$-Braess edge, completing the proof of  part~(i).

For part~(ii), first suppose that $p=o(1)$.  By
\Cref{lem:sparse-uniform-estimates}(ii) and \Cref{lem:sparse-barriers}(ii), there are fixed $c,C,\eta>0$ such that,
whp,
$c\avdeg\le d_G(v)\le C\avdeg$  for every vertex $v$ and 
  $d_G(u)+d_G(v)>(1+\eta)\avdeg$ for every edge $uv$.
Fix such an edge and let $H=G-uv$.
The same perturbation estimate as above gives
\[
  \opnorm{B_H}=O(\avdeg^{-1/2})<1,
\]
uniformly over all edges, and therefore  $H=G-uv$  is connected for every edge $uv$.

The remaining conditions for Kemeny-tameness also transfer uniformly from $G$ to $H$.
Codegrees and triangle counts do not increase, and the minimum degree in
$H$ is at least $(c/2)\avdeg$.  Moreover, if
$a=d_G(u)$ and $b=d_G(v)$, then the identity
\begin{equation}\label{eq:sparse-harmonic-transfer}
  h_H(u)
  =
  \frac{a}{a-1}h_G(u)
  -\frac\avdeg{(a-1)b}
\end{equation}
holds, with the analogous identity at $v$.  Thus
\Cref{lem:sparse-uniform-estimates}(iii) implies
$h_H(u),h_H(v)=1+o(1)$, uniformly over all edges.
Applying \Cref{cor:sparse-deletion} gives
\[
  \K(G-uv)-\K(G)
  =
  \frac{2(d_G(u)+d_G(v)-\avdeg)}
       {\avdeg d_G(u)d_G(v)}
  +o(\avdeg^{-2})>0.
\]
Indeed, the displayed main term is at least
$2\eta C^{-2}\avdeg^{-2}$, % 
 whereas
the remainder is $o(\avdeg^{-2})$.  This proves part~(ii) whenever
$p=o(1)$.

It remains to remove that restriction.  Let $\mathcal F_n^{\rm e}$ be
the event that $G$ is disconnected, or that $G$ is connected and some
edge $e$ makes  $G-e$ disconnected or satisfies
$\K(G-e)\le\K(G)$.  If
$\Pp(\mathcal F_n^{\rm e})\not\to0$, then there is a subsequence along which
these probabilities are bounded away from zero.  Passing to a further
subsequence, compactness gives $p\to q\in[0,1]$.  If $q=0$, then the sparse
argument just proved gives $\Pp(\mathcal F_n^{\rm e})\to0$.  If $q>0$,
then eventually $p\ge q/2$, and \Cref{thm:dense-exclusion}(ii) gives the
same conclusion.  Both cases are contradictions, and part~(ii) follows for
arbitrary $p=p(n)$.
\end{proof}

\begin{proof}[Proof of \Cref{thm:nonedge-threshold}]
Again, $G$ is connected whp.  For part~(i), the upper
bound on $\avdeg$ forces $p=o(1)$.  Choose $0< \alpha<1/2$ so close to
$1/2$ that
$(2C^*-\varepsilon)I(\alpha)<1$.
Then,
\[
  n\exp\bigl(-\avdeg I(\alpha)\bigr)\ge n^\delta
\]
for some fixed $\delta>0$.  By
\Cref{lem:sparse-low-degree-reservoirs}(i), there are two vertices
$u,v$ of degree $\lfloor\alpha\avdeg\rfloor$ that are nonadjacent, have
no common neighbour, satisfy $h_G(u),h_G(v)=1+o(1)$, and belong to no
triangle.  The cubic term \eqref{eq:sparse-cubic-exact}  with $X=G$ is therefore
identically zero, and the Kemeny-tameness hypothesis
holds.  Proposition~\ref{prop:sparse-addition} with $X=G$  gives
\[
  \avdeg^2\bigl(\K(G+uv)-\K(G)\bigr)
  =
  \frac{2(1-2\alpha)}{\alpha^2}+o(1)>0,
\]
{implying $\K(G+uv)-\K(G)>0$.}
Thus $uv$ is a $\kappa$-Braess nonedge, proving part~(i).

For part~(ii), first suppose that $p=o(1)$.  By
\Cref{lem:sparse-barriers}(i), there is an
$\eta>0$ such that whp
\[
  d_G(v)>(1/2+\eta)\avdeg
\]
for every vertex $v$.  The maximum degree is $O(\avdeg)$  by \Cref{lem:sparse-uniform-estimates}(ii), and
\Cref{lem:sparse-uniform-estimates}(iii) gives
$h_G(x)=1+o(1)$ uniformly.  The Kemeny-tameness hypothesis of
\Cref{prop:sparse-addition} therefore holds uniformly over the nonedges.
For any nonedge $uv$,
\[
  \frac{d_G(u)+d_G(v)}\avdeg>1+2\eta.
\]
It follows from \eqref{eq:sparse-addition}, uniformly over all nonedges,
that
\[
  \K(G+uv)-\K(G)<0.
\]
More explicitly, the main term in \eqref{eq:sparse-addition} is at most
$-c_\eta\avdeg^{-2}$ for a fixed $c_\eta>0$, while the uniform remainder
is $o(\avdeg^{-2})$.  This proves part~(ii) whenever $p=o(1)$.

Let $\mathcal F_n^{\rm ne}$ be the event that $G$ is disconnected, or
that $G$ is connected and some nonedge $uv$ satisfies
$\K(G+uv)\ge\K(G)$.  If
$\Pp(\mathcal F_n^{\rm ne})\not\to0$, then we pass to a subsequence on which
these probabilities are bounded away from zero and then to a further
subsequence on which $p\to q\in[0,1]$.  If $q=0$, then the sparse argument above
gives $\Pp(\mathcal F_n^{\rm ne})\to0$; if $q>0$, then eventually
$p\ge q/2$, and \Cref{thm:dense-exclusion}(i) gives the same conclusion.
This contradiction completes the proof for arbitrary $p=p(n)$.
\end{proof}

\section{Conclusion and Outlook}
Our results determine, to first order, the disappearance thresholds
for Kemeny--Braess edges and nonedges in the connected
Erd\H{o}s--R\'enyi regime. Both thresholds arise from the same
endpoint-degree criterion, with their different constants reflecting
the different numbers of candidate edges and nonedges. In particular,
dense random graphs exhibit {neither Kemeny--Braess edges nor nonedges whp}.
This contrasts with the abundance of paradoxical additions for the
spectral gap (\Cref{thm_eldan}) and with the behaviour predicted
for deletions by Chung's conjecture (confirmed by~\Cref{thm:eldan-edges} in a weaker form). These observations raise a
broader question: how does the choice of a spectral measure of
random-walk performance determine its response to adding or removing
connections?

A natural framework for this question is provided by the inverse
spectral moments
$F_q(G):=\sum_{i=2}^{n}(1-\lambda_i)^{-q}$, where $q>0$ and
$1=\lambda_1>\lambda_2\geq\cdots\geq\lambda_n$ are the
transition-matrix eigenvalues of a connected graph.
We have $F_1(G)=\K(G)$, while, for each fixed graph,
$F_q(G)^{1/q}\to(1-\lambda_2)^{-1}$ as $q\to\infty$.
Thus increasing $q$ gives progressively greater weight to the slowest
spectral modes and connects Kemeny's constant to the reciprocal
spectral gap. Taking the $q$th root preserves the sign of an edge
perturbation.

The moment estimates in our proof also apply to this family when
$q$ is fixed. The Taylor expansion of $(1-x)^{-q}$ has quadratic
coefficient $q(q+1)/2$, while the constant and linear contributions
to $F_q$ are unchanged by an edge modification:
the number of vertices is unchanged and
$\sum_{i=2}^{n}\lambda_i=-1$ for every loopless graph.
Together with our estimates for the cubic and higher moments, this
gives, for fixed $p\in(0,1)$ and $q>0$, whp and uniformly over
nonedges $uv$,
\[
  F_q(G+uv)-F_q(G)
  =
  -\frac{q(q+1)}{p^2n^2}+o(n^{-2}),
  \qquad G\sim G(n,p).
\]
The corresponding deletion estimate has the opposite sign.
The error bounds may depend on $p$ and $q$. Consequently, the
absence of paradoxical additions and deletions in the dense regime
extends to every fixed inverse spectral moment.

The situation can change when $q=q(n)$ grows. For any fixed graph,
an edge modification that strictly improves the spectral gap
decreases $F_q$ for all sufficiently large $q$, and the reverse
holds for a modification that strictly worsens the spectral gap.
It is therefore natural to ask at what growth rates of $q$
paradoxical additions and deletions emerge, and when they constitute
a positive proportion of the available modifications.

For deletions, this programme connects directly to Chung's
conjecture. Define
\[
  r_q(G):=
  \frac{
    |\{e\in E(G):
       G-e\text{ is connected and }F_q(G-e)<F_q(G)\}|
  }{|E(G)|}.
\]
Can one find a sequence $q=q(n)\to\infty$ and a constant $c>0$
such that $r_q(G(n,p))\geq c$ whp, with sufficient quantitative
control to transfer these inequalities to the spectral gap?
Such a result would provide a route to
Conjecture~\ref{conj_chung}.

Two difficulties distinguish this question from simply taking
$q\to\infty$. First, approximation of the reciprocal spectral gap
must be precise enough to determine the sign of a single-edge
perturbation. Writing $R(G):=(1-\lambda_2)^{-1}$, the bounds
$R(G)\leq F_q(G)^{1/q}\leq(n-1)^{1/q}R(G)$ control the values,
but their errors need not be smaller than the changes under
consideration. Second, the proportion of beneficial deletions must
be bounded away from zero with high probability.
The positive-probability conclusion of
\cite[Corollary~1.5]{eldan2017braess} does not establish this.
An important question is whether the aggregate quantities $F_q$
permit sufficient concentration estimates while $q$ grows towards
the regime dominated by extremal eigenvalues.

Our results identify rare low-degree endpoints as the mechanism
governing Kemeny's constant near its disappearance thresholds.
The spectral-gap phenomenon demonstrates the influence of
extremal spectral modes even when degrees are tightly concentrated.
Understanding how these mechanisms interact as both the density
and $q$ vary would connect the present threshold results with
Chung's conjecture and a broader study of spectral responses to
local graph modifications.

A finer question within this programme concerns the critical
windows around our two thresholds. When the endpoint-degree sum
differs from $\avdeg$ by only a bounded amount, the leading
perturbation can be of order $\avdeg^{-3}$, beyond the precision of
our current $o(\avdeg^{-2})$ error bound. More precise expansions
would clarify how neighbourhood fluctuations and higher spectral
moments modify the degree criterion at the boundary.

\newpage

\addcontentsline{toc}{section}{References}
\bibliographystyle{plainurl}
\bibliography{bibliography.bib}

\clearpage
\appendix

\section{Glossary}

\paragraph*{Erd\H{o}s--R\'enyi random graphs.}
The Erd\H{o}s--R\'enyi random graph $G(n,p)$ is the probability space of
graphs with vertex set $[n]=\{1,\ldots,n\}$ in which each of the $\binom{n}{2}$
possible edges is present independently with probability $p$
\cite{ErdosRenyi}. 
We refer to the edge probability $p$ as the \emph{density} of the model,
since it is the expected fraction of the $\binom{n}{2}$ possible edges
that are present; a \emph{fixed positive density} means that $p\in(0,1)$ is
held constant as $n\to\infty$.
The Erd\H{o}s--R\'enyi random graph is the basic reference model for a ``typical'' graph,
and its properties commonly exhibit coarse or sharp thresholds as $p=p(n)$ grows.
Connectivity is the classical example:  {$G(n,p)$ is connected whp
if $np-\log n\to+\infty$, and disconnected whp
if $np-\log n\to-\infty$; see~\cite{ErdosRenyi}.}

\paragraph*{The Euclidean norm.}
\label{sect:eucl}
{For a vector $\bold{x}=(x_v)_{v\in V}\in\mathbb{R}^{V}$, its Euclidean norm is
\[
    \lVert \bold{x}\rVert_2
    :=
    \sqrt{\bold{x}^\top \bold{x}}
    =
    \left(\sum_{v\in V}x_v^2\right)^{1/2}.
\]
In particular, every standard basis vector $e_v$ has Euclidean norm one.}

\paragraph*{The operator norm.}
\label{glo:op}
For a real matrix $A\in\R^{m\times n}$, the \emph{operator norm} (or
spectral norm) is
\[
  \opnorm{A}
  =\max_{\bold{x}\in\R^n,\;\bold{x}\neq0}\frac{\lVert A\bold{x}\rVert_2}{\lVert \bold{x}\rVert_2},
\]
the largest factor by which $A$ can stretch the Euclidean length of a
vector; it equals the largest singular value of $A$
\cite[\S5.6]{HornJohnson}. 
The operator norm is the matrix norm
induced by the Euclidean norm. 
For the symmetric matrices used in this paper it
admits the spectral description $\opnorm{A}=\max_i|\lambda_i(A)|$
\cite[\S5.6]{HornJohnson}.  Being an induced norm, it is submultiplicative,
$\opnorm{AB}\le\opnorm{A}\,\opnorm{B}$ \cite[\S5.6]{HornJohnson}.  

The operator norm provides a convenient way to detect whether a graph is
connected and non-bipartite. More precisely, assume that $G$ has no
isolated vertices and let
\[
  S_G=D_G^{-1/2}A_GD_G^{-1/2},
  \qquad
  \Pi_G=\phi_G\phi_G^\top,
  \qquad
  B_G=S_G-\Pi_G.
\]
The multiplicity of the eigenvalue $1$ of $S_G$, equivalently the
multiplicity of the eigenvalue $0$ of the normalised Laplacian
$\mathcal L_G=I-S_G$, equals the number of connected components of $G$;
see~\cite[Lemma~1.7(iv) and (vi)]{Chung97}. Since $\Pi_G$ is the
orthogonal projection onto the particular $1$-eigenvector $\phi_G$, if
$G$ is disconnected, then there is another $1$-eigenvector of $S_G$
orthogonal to $\phi_G$. This vector is also a $1$-eigenvector of
$B_G$, and hence $\opnorm{B_G}=1$. If $G$ is connected, then $G$ is bipartite if and only if $2$ is an
eigenvalue of $\mathcal L_G$, equivalently if and only if $-1$ is an
eigenvalue of $S_G$; see~\cite[Lemma~1.7(v) and Lemma~1.8]{Chung97}. A
corresponding eigenvector is orthogonal to $\phi_G$, so it is also a
$-1$-eigenvector of $B_G$, and again $\opnorm{B_G}=1$.
Conversely, if $G$ is connected and non-bipartite, then $0$ is a simple
eigenvalue of $\mathcal L_G$, while all its other eigenvalues lie strictly
between $0$ and $2$; see again
\cite[Lemma~1.7(iv)--(vi)]{Chung97}. Therefore every eigenvalue of $S_G$
other than its simple eigenvalue $1$ has modulus strictly less than $1$.
Since $B_G\phi_G=0$ and $B_G$ agrees with $S_G$ on the orthogonal
complement of $\phi_G$, it follows that $\opnorm{B_G}<1$. Consequently,
\begin{align}
  \opnorm{B_G}<1
  \quad\Longleftrightarrow\quad
  G\text{ is connected and non-bipartite}.
\label{eq:non-bip}
\end{align}

\paragraph*{The nuclear norm.}
\label{sect:nuclear}
For a real matrix $A\in\R^{m\times n}$, the \emph{nuclear norm} (or trace
norm, or Schatten $1$-norm) is
$\nucnorm{A}=\sum_i\sigma_i(A)$,
the sum of the singular values of $A$ \cite[\S IV.2]{Bhatia}; for
symmetric $A$ this equals $\sum_i|\lambda_i(A)|$.  It dominates the
operator norm, $\opnorm{A}\le\nucnorm{A}$, and since a matrix of rank
$k$ has at most $k$ nonzero singular values, it satisfies the rank--norm
inequality $\nucnorm{A}\le k\,\opnorm{A}$ \cite[\S IV.2]{Bhatia}.  
The outer product
$\bold{x}\bold{y}^\top$ has rank at most one, and 
thus we have the identity
\begin{align}
    \lVert \bold{x}\bold{y}^\top\rVert_*
    =
    \lVert \bold{x}\rVert_2\lVert \bold{y}\rVert_2.
    \label{eq:nucl-eucl}
\end{align}
Three 
further properties are used in this paper: 
the \emph{triangle inequality} 
\begin{align}
\nucnorm{A+B}\leq\nucnorm{A}+\nucnorm{B}
\label{eq:nuclear-triangle}
\end{align}
for all matrices $A,B$ of the same dimensions~\cite[Theorem~IV.2.1]{Bhatia}, 
the \emph{trace duality} \cite[\S IV.2]{Bhatia} (an instance of the matrix H\"older inequalities for Schatten norms) 
\begin{align}
|\tr(XY)|\le\opnorm{X}\,\nucnorm{Y} \label{eq:trace-duality}
\end{align}
 and the
inequality \begin{align}
\label{eqn_1608_2109}
\nucnorm{XYZ}\le\opnorm{X}\,\nucnorm{Y}\,\opnorm{Z},
\end{align}
valid
for every unitarily invariant norm \cite[Proposition~IV.2.4]{Bhatia}.  In
particular $\nucnorm{\bold{x}\bold{y}^{\mathsf T}+\bold{y}\bold{x}^{\mathsf T}}
\le2\lVert \bold{x}\rVert_2\lVert \bold{y}\rVert_2$, the form in which the new edge
enters the perturbation estimates.

\paragraph*{Asymptotic notation.}
All asymptotic notation in this work refers to the limit $n\to\infty$.
For real-valued functions $f=f(n)$ and $g=g(n)$, with $g(n)>0$
for all sufficiently large $n$, we write $f=O(g)$ if there exists
a constant $C>0$ such that $|f(n)|\le Cg(n)$ for all sufficiently
large $n$, and $f=o(g)$ if $f(n)/g(n)\to0$.
Moreover, we write $f=\Omega(g)$ if there exists
a constant $c>0$ such that $|f(n)|\ge cg(n)$ for all sufficiently
large $n$, and $f=\Theta(g)$ if both $f=O(g)$ and $f=\Omega(g)$.
Unless stated otherwise, implicit constants may depend on fixed
parameters but not on $n$.

\section{Omitted proof}
\label{appendix_omitted_proofs}%

{
In this section, we prove~\Cref{lem:event}.
We shall make use of the following matrix Bernstein theorem. 
\begin{theorem}[\protect{\cite[Theorem~1.4]{Tropp}}]
\label{thm_matrix_bernstein}
Consider a finite sequence $\{X_k\}$ of independent, random, self-adjoint
matrices with dimension $d$. Assume that each random matrix satisfies
\[
\mathbb{E}X_k=0
\qquad\text{and}\qquad
\lambda_{\max}(X_k)\leq R
\quad\text{almost surely}.
\]
Then, for all $t\geq 0$,
\[
\mathbb{P}\left\{
  \lambda_{\max}\left(\sum_k X_k\right)\geq t
\right\}
\leq
d\exp\left(
  \frac{-t^2/2}{\sigma^2+Rt/3}
\right),
\]
where
\[
\sigma^2
:=
\left\|
  \sum_k \mathbb{E}\left(X_k^2\right)
\right\|_{\operatorname{op}}.
\]
\end{theorem}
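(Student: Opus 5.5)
The statement is Tropp's matrix Bernstein inequality for sums of independent self-adjoint matrices. I would follow the matrix Laplace-transform method. Put $Y:=\sum_k X_k$ and fix $\theta>0$. Since $\lambda_{\max}(e^{\theta Y})=e^{\theta\lambda_{\max}(Y)}$ and $e^{\theta Y}$ is positive definite, we have $e^{\theta\lambda_{\max}(Y)}\le \tr e^{\theta Y}$. Markov's inequality then gives
\[
\Pp\{\lambda_{\max}(Y)\ge t\}\le \inf_{\theta>0} e^{-\theta t}\,\E\tr e^{\theta Y}.
\]
The rest of the argument bounds the trace moment generating function $\E\tr e^{\theta Y}$.

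\textbf{Subadditivity of matrix cumulants.} The key tool is Lieb's concavity theorem: for fixed self-adjoint $H$, the map $A\mapsto \tr\exp(H+\log A)$ is concave on positive definite matrices. I would condition on $X_1,\dots,X_{m-1}$ and take $H=\sum_{k<m}\theta X_k$ and $A=e^{\theta X_m}$. Jensen's inequality (valid by concavity) then gives
\[
\E\tr\exp(H+\theta X_m)\le \E\tr\exp\bigl(H+\log\E e^{\theta X_m}\bigr).
\]
Iterating over $m$ yields
\[
\E\tr e^{\theta Y}\le \tr\exp\Bigl(\sum_k\log\E e^{\theta X_k}\Bigr).
\]
This is the step I expect to be the main obstacle. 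Lieb's theorem is deep, so I would simply cite it rather than reprove it. The iteration itself is routine.

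\textbf{Bounding each cumulant.} Set $g(\theta):=\frac{\theta^2/2}{1-R\theta/3}$ for $0<\theta<3/R$. I would first prove the scalar inequality
\[
e^{\theta x}\le 1+\theta x+g(\theta)x^2 \qquad\text{for all } x\le R.
\]
This follows by expanding $e^{\theta x}-1-\theta x=x^2\sum_{q\ge2}\theta^q x^{q-2}/q!$. For $x\ge0$ we use $x^{q-2}\le R^{q-2}$ and $q!\ge 2\cdot 3^{q-2}$, which gives a geometric series summing to $g(\theta)$. For $x<0$ we use that $(e^{y}-1-y)/y^2$ is increasing in $y$, so the bound at $x=0$ dominates. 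Because $\lambda_{\max}(X_k)\le R$, the spectral mapping theorem transfers this to the semidefinite order:
\[
e^{\theta X_k}\preccurlyeq I+\theta X_k+g(\theta)X_k^2 .
\]
Taking expectations and using $\E X_k=0$ gives $\E e^{\theta X_k}\preccurlyeq I+g(\theta)\E X_k^2\preccurlyeq \exp(g(\theta)\E X_k^2)$. The logarithm is operator monotone, so $\log\E e^{\theta X_k}\preccurlyeq g(\theta)\E X_k^2$.

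\textbf{Conclusion.} The function $\tr\exp$ is monotone with respect to $\preccurlyeq$, so
\[
\E\tr e^{\theta Y}\le \tr\exp\Bigl(g(\theta)\sum_k\E X_k^2\Bigr)\le d\,\exp\bigl(g(\theta)\sigma^2\bigr).
\]
The last step bounds the trace by $d$ times the largest eigenvalue. Hence
\[
\Pp\{\lambda_{\max}(Y)\ge t\}\le d\exp\bigl(-\theta t+g(\theta)\sigma^2\bigr).
\]
Choosing $\theta=t/(\sigma^2+Rt/3)$, which lies in $(0,3/R)$, and simplifying gives an exponent of exactly $-\frac{t^2/2}{\sigma^2+Rt/3}$, as claimed.
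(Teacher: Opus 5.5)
Your proof is correct and is essentially Tropp's own proof of this result: the Laplace-transform bound, subadditivity of the matrix cumulant generating functions via Lieb's concavity theorem, the Bernstein estimate $\log\E e^{\theta X_k}\preccurlyeq g(\theta)\E X_k^2$, and the choice $\theta=t/(\sigma^2+Rt/3)$. The paper gives no proof of its own and simply cites Tropp. The only nit is degenerate cases. For $t=0$ the bound is trivial because the right-hand side is $d\ge1$. For $\sigma^2=0$ your $\theta$ lands on the boundary value $3/R$, outside $(0,3/R)$, but then every $X_k=0$ almost surely and the claim is trivial.
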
}

\begin{proof}[Proof of~\Cref{lem:event}]
We use the following standard multiplicative Chernoff bound: if $X$ is
binomial with mean $m$, then, for $0<\tau\leq 1$,
\begin{equation}
  {\Pp}\bigl(|X-m|>\tau m\bigr)
  \leq 2\exp\left(-\frac{\tau^2m}{3}\right).
  \label{eq:Chernoff-dense-event}
\end{equation}
Set $a:=\frac{6}{p_0}$.
Since $\varepsilon\coloneqq\varepsilon_n\to0$ as $n\to\infty$, we may assume throughout that $n$ is
sufficiently large that $a\varepsilon\leq 1/2$.  
For each vertex $v$, consider the events
\[
  \mathcal D_v
  :=
  \left\{
    \left|\frac{d_v}{\avdeg}-1\right|
    \leq a\varepsilon
  \right\},
  \qquad
  \mathcal D:=\bigcap_v\mathcal D_v.
\]
Since  $d_v\sim \mathrm{Bin}(n-1,p)$ with mean
$\avdeg=p(n-1)$, \eqref{eq:Chernoff-dense-event} and the union bound give
\[
\begin{aligned}
  \Pp(\mathcal D^{\,c})
  &\leq
  2n\exp\left(
    -\frac{a^2\varepsilon^2\avdeg}{3}
  \right).
\end{aligned}
\]
For $n\geq2$,
\[
  \varepsilon^2\avdeg
  =
  \frac{\log n}{n}p(n-1)
  \geq \frac{p_0}{2}\log n.
\]
Moreover,
\[
  \frac{a^2p_0}{6}
  =\frac{6}{p_0}\geq6.
\]
Consequently,
\begin{equation}
\label{eqn_1552_1008}
  \sup_{p\in[p_0,1]}\Pp(\mathcal D^{\,c})
  \leq 2n^{-5}.
\end{equation}
On $\mathcal D$, condition~\textnormal{(i)} in the definition of
$\mathcal E_{n,p}$ holds with constant $a$.

We now look at condition (ii). For distinct vertices $u,v$, put
\[
  c_{uv}:=|N(u)\cap N(v)|.
\]
There are $n-2$ possible common neighbours, and each is adjacent to
both $u$ and $v$ with probability $p^2$.  Hence
$
  c_{uv}\sim\operatorname{Bin}(n-2,p^2),
$
and its mean is
$
  m_2:=(n-2)p^2.
$
Consider the event
\[
  \mathcal Q
  :=
  \bigcap_{u\neq v}
  \left\{
    \left|\frac{c_{uv}}{m_2}-1\right|
    \leq a\varepsilon
  \right\}.
\]
By \eqref{eq:Chernoff-dense-event} and a union bound over fewer than
$n^2$ ordered pairs,
\[
  \Pp(\mathcal Q^{\,c})
  \leq
  2n^2\exp\left(
    -\frac{a^2\varepsilon^2m_2}{3}
  \right).
\]
For $n\geq4$,
\[
  \varepsilon^2m_2
  =
  \frac{\log n}{n}(n-2)p^2
  \geq\frac{p_0^2}{2}\log n.
\]
Since $a^2p_0^2/6=6$, it follows that
\begin{equation}
  \sup_{p\in[p_0,1]}\Pp(\mathcal Q^{\,c})
  \leq 2n^{-4}.
  \label{eq:codegree-event-probability}
\end{equation}

Notice that $\mathcal Q$ automatically implies connectivity of $G$.  Indeed, for all
sufficiently large $n$, on $\mathcal Q$ we have, for every $u\neq v$,
\[
  |N(u)\cap N(v)|
  \geq
  (1-a\varepsilon)(n-2)p^2
  \geq
  \frac{p_0^2(n-2)}{2}
  >0.
\]
Thus every two distinct vertices have a common neighbour.  In
particular, $G$ is connected.

For a vertex $v$, define
\[
  q_v:=p\binom{d_v}{2},
  \qquad
  \tau_n:=\binom{n-1}{2}p^3.
\]
Conditional on the neighbourhood $N(v)$, the edges with both endpoints
in $N(v)$ remain mutually independent and are present with probability
$p$.  Therefore, recalling that $t_v$ denotes the number of triangles in $G$ containing $v$, we have
\[
  t_v\mid N(v)
  \sim
  \operatorname{Bin}\left(\binom{d_v}{2},p\right),
\]
whose mean is $q_v$.
On $\mathcal D_v$,
\[
  d_v\geq(1-a\varepsilon)\avdeg
  \geq\frac{p_0(n-1)}{2}.
\]
Thus, for all sufficiently large $n$,
\[
  q_v
  =
  p\binom{d_v}{2}
  \geq
  \frac{p_0^3}{64}n^2.
\]
Since $\mathcal D_v$ and $q_v$ are determined by $N(v)$, applying
\eqref{eq:Chernoff-dense-event} gives
\begin{align*}
 &\Pp\left(
   \mathcal D_v
   \cap
   \bigl\{|t_v-q_v|>\varepsilon q_v\bigr\}
 \right)                                                    \\
 &\quad=
 \mathbb E\left[
   \mathbf 1_{\mathcal D_v}
   \Pp\left(
     |t_v-q_v|>\varepsilon q_v
     \,\middle|\,N(v)
   \right)
 \right]                                                     \\
 &\quad\leq
 2\exp\left(-\frac{\varepsilon^2p_0^3n^2}{192}\right)
 =
 2\exp\left(-\frac{p_0^3}{192}n\log n\right).
\end{align*}
Define
\[
  \mathcal R
  :=
  \bigcap_v
  \left(
    \mathcal D_v^{\,c}
    \cup
    \bigl\{|t_v-q_v|\leq\varepsilon q_v\bigr\}
  \right).
\]
The preceding estimate and a union bound yield
\begin{equation}
\label{eqn_1553_1008_b}
  \sup_{p\in[p_0,1]}\Pp(\mathcal R^{\,c})
  \leq
  2n\exp\left(-\frac{p_0^3}{192}n\log n\right).
\end{equation}
It remains to compare the conditional mean $q_v$ with the deterministic
quantity $\tau_n$.  On $\mathcal D_v$, write
\[
  d_v=\avdeg(1+\eta_v)
      =p(n-1)(1+\eta_v),
  \qquad
  |\eta_v|\leq a\varepsilon.
\]
A direct calculation gives
\begin{align}
  \frac{q_v}{\tau_n}
  &=
  \frac{d_v(d_v-1)}{p^2(n-1)(n-2)}
  =
  \frac{n-1}{n-2}(1+\eta_v)^2
  -
  \frac{1+\eta_v}{p(n-2)}.
  \label{eq:conditional-triangle-mean}
\end{align}
Consequently, using the triangle inequality,
\begin{align*}
  \left|\frac{q_v}{\tau_n}-1\right|
  &\leq
  \frac{1}{n-2}
  +
  \frac{n-1}{n-2}
  \bigl(2|\eta_v|+\eta_v^2\bigr)
  +
  \frac{1+|\eta_v|}{p(n-2)}.
\end{align*}
For all sufficiently large $n$,
\[
  \frac1{n-2}\leq\varepsilon,
  \qquad
  \frac1{p_0(n-2)}\leq\varepsilon,\qquad\mbox{and}
  \qquad
  a\varepsilon\leq\frac12.
\]
Using also $(n-1)/(n-2)\leq2$, we obtain
\[
  \left|\frac{q_v}{\tau_n}-1\right|
  \leq
  (5a+3)\varepsilon.
\]
Put
$b:=5a+3$.
After increasing $n$ further so that $b\varepsilon\leq1$, we have
$q_v/\tau_n\leq2$.  Therefore, on
$\mathcal D\cap\mathcal R$,
\begin{align*}
  \left|\frac{t_v}{\tau_n}-1\right|
  &\leq
  \frac{|t_v-q_v|}{\tau_n}
  +
  \left|\frac{q_v}{\tau_n}-1\right|                         \\
  &\leq
  \varepsilon\frac{q_v}{\tau_n}
  +b\varepsilon
  \leq
  (b+2)\varepsilon
\end{align*}
for every $v$ (where the second inequality uses that, on $\mathcal D\cap\mathcal R$, it holds that $|t_v-q_v|\leq\varepsilon q_v$ for each $v$). This proves condition~\textnormal{(iii)} with constant
$b+2$.

We now turn to (iv). First, we establish a concentration estimate for the adjacency matrix $A$ of $G$.
For $1\leq i<j\leq n$, let $\xi_{ij}$ be the indicator of the edge
$ij$, and recall that $e_i$ denotes the $i$-th standard basis vector. 
Set also
\[
  X_{ij}
  :=
  (\xi_{ij}-p)
  \bigl(
    e_ie_j^\top
    +
    e_je_i^\top
  \bigr).
\]
Then the $X_{ij}$ are independent, symmetric, mean-zero random matrices,
and
\[
  \sum_{i<j}X_{ij}=A-p(J-I),
\]
where $J$ is the all-one matrix.
Letting $M_{ij}=e_ie_j^\top+e_je_i^\top$ and noting that $\|M_{ij}\|_{\mathrm{op}}=1$, we find
\begin{align*}
    \|X_{ij}\|_{\mathrm{op}}
=
|\xi_{ij}-p|\,\|M_{ij}\|_{\mathrm{op}}
=
|\xi_{ij}-p|
\leq 1.
\end{align*}
Moreover,
\[
\begin{aligned}
  \sum_{i<j}\mathbb E(X_{ij}^2)
  &=
  p(1-p)
  \sum_{i<j}
  \bigl(
    e_ie_i^\top
    +
    e_je_j^\top
  \bigr)                                                     =
  p(1-p)(n-1)I.
\end{aligned}
\]
Applying~\Cref{thm_matrix_bernstein} to the $X_{ij}$ and to the $-X_{ij}$, and
then taking a union bound, gives, for every $t\geq0$,
\begin{equation}
  \Pp\left(
    \opnorm{A-p(J-I)}\geq t
  \right)
  \leq
  2n\exp\left(
    -\frac{t^2}
    {2p(1-p)(n-1)+2t/3}
  \right).
  \label{eq:matrix-Bernstein-adjacency}
\end{equation}
Let
\[
  \mathcal M
  :=
  \left\{
    \opnorm{A-p(J-I)}
    \leq 4\sqrt{n\log n}
  \right\}.
\]
For all sufficiently large $n$,
$4\sqrt{n\log n}\leq n$.  Since
$p(1-p)(n-1)\leq n$,~\eqref{eq:matrix-Bernstein-adjacency} gives
\begin{equation}
  \sup_{p\in[p_0,1]}\Pp(\mathcal M^{\,c})
  \leq
  2n\exp\left(-\frac{16}{3}\log n\right)
  =
  2n^{-13/3}.
  \label{eq:matrix-event-probability}
\end{equation}

We now work on
$\mathcal D\cap\mathcal Q\cap\mathcal M$.  As we have seen, the event $\mathcal Q$
implies that $G$ is connected, so the vector $\phi$ and the matrices $S,\Pi,B$ (see~\Cref{sec_spectral_expansion}) are
well-defined. As before, write $d_v=\avdeg(1+\eta_v)$ with
  $|\eta_v|\leq a\varepsilon$,
and define the diagonal matrix
\[
  F:=\avdeg^{1/2}D^{-1/2}-I.
\]
Thus
\[
  D^{-1/2}=\avdeg^{-1/2}(I+F).
\]
For $|x|\leq1/2$, the mean-value theorem gives
$\left|(1+x)^{-1/2}-1\right|\leq2|x|$.
It follows that $\opnorm{F}\leq \max_v 2|\eta_v| \leq2a\varepsilon$.
Furthermore,
\[
  \opnorm{A}
  \leq \max_v d_v
  \leq \avdeg(1+a\varepsilon)
  \leq \frac32\avdeg.
\]
Consequently,
\begin{align}
  \opnorm{S-\avdeg^{-1}{A}}
  &=
  \avdeg^{-1} 
  \opnorm{FA+AF+FAF}                                    \leq
 \avdeg^{-1} \opnorm{A} 
  \left(
    2\opnorm{F}+\opnorm{F}^2
  \right)                                                
  \leq 9a\varepsilon.
  \label{eq:S-adjacency-comparison}
\end{align}

Next, put
\[
  \overline\eta:=\frac1n\sum_v\eta_v,
  \qquad
  \psi:=\frac1{\sqrt n}\one.
\]
Since $2|E|
  =
  \sum_vd_v
  =
  n\avdeg(1+\overline\eta)$,
{the $v$-th entry of the vector $\phi$ is}
\[
  \phi_v
  =
  \frac1{\sqrt n}
  \sqrt{\frac{1+\eta_v}{1+\overline\eta}}.
\]
Both $|\eta_v|$ and $|\overline\eta|$ are at most
$a\varepsilon$, and hence
\begin{align*}
  \left|
    \sqrt{\frac{1+\eta_v}{1+\overline\eta}}-1
  \right|
  &=
  \frac{|\eta_v-\overline\eta|}
  {
    \sqrt{1+\overline\eta}
    \bigl(
      \sqrt{1+\eta_v}+\sqrt{1+\overline\eta}
    \bigr)
  }                                                          \leq
  |\eta_v-\overline\eta|
  \leq2a\varepsilon.
\end{align*}
It follows that
\[
  \lVert\phi-\psi\rVert_2\leq2a\varepsilon.
\]
Because $\phi$ and $\psi$ are unit vectors,
\begin{align}
  \opnorm{\Pi-\frac{J}{n}}
  &=
  \opnorm{\phi\phi^\top-\psi\psi^\top}                  
  \leq
  2\lVert\phi-\psi\rVert_2
  \leq4a\varepsilon.
  \label{eq:Pi-comparison}
\end{align}

Finally, on $\mathcal M$,
\begin{align*}
  \left\|{\frac{A}{\avdeg}-\frac{J}{n}}\right\|_{\operatorname{op}}
  &\leq
  \frac{\opnorm{A-p(J-I)}}{p(n-1)}
  +
  \left\|{\frac{J-I}{n-1}-\frac{J}{n}}\right\|_{\operatorname{op}}                       \\
  &\leq
  \frac{4\sqrt{n\log n}}{p_0(n-1)}
  +
  \frac1{n-1}.
\end{align*}
Here
\[
  \frac{J-I}{n-1}-\frac{J}{n}
  =
  \frac{J/n-I}{n-1},
\]
whose operator norm is $1/(n-1)$.  Moreover,
\[
  \frac{\sqrt{n\log n}}{n-1}
  =
  \frac{n}{n-1}\varepsilon
  \leq2\varepsilon,
\]
and $1/(n-1)\leq\varepsilon$ for all sufficiently large $n$.
Therefore
\begin{equation}
    \left\|{\frac{A}{\avdeg}-\frac{J}{n}}\right\|_{\operatorname{op}}
  \leq
  \left(\frac{8}{p_0}+1\right)\varepsilon.
  \label{eq:centered-adjacency-comparison}
\end{equation}
Combining
\eqref{eq:S-adjacency-comparison},
\eqref{eq:Pi-comparison}, and
\eqref{eq:centered-adjacency-comparison}, we obtain
\begin{align*}
  \opnorm{B}
  =
  \opnorm{S-\Pi}
  &\leq
  \opnorm{S-\frac{A}{\avdeg}}
  +
  \left\|{\frac{A}{\avdeg}-\frac{J}{n}}\right\|_{\operatorname{op}}
  +
  \left\|{\frac{J}{n}-\Pi}\right\|_{\operatorname{op}}                                  \\
  &\leq
  \left(
    13a+\frac{8}{p_0}+1
  \right)\varepsilon.
\end{align*}
{Set $C:=\max\{a,b+2,13a+\frac{8}{p_0}+1\}$, and note that $C$ depends only on $p_0$.}
Define the auxiliary event
\[
  \mathcal F_{n,p}
  :=
  \mathcal D\cap\mathcal Q\cap\mathcal R\cap\mathcal M.
\]
The preceding arguments show that, for all sufficiently large $n$,
$\mathcal F_{n,p}\subseteq\mathcal E_{n,p}$.
Moreover, by
\eqref{eqn_1552_1008},
\eqref{eq:codegree-event-probability},
\eqref{eqn_1553_1008_b}, and
\eqref{eq:matrix-event-probability},%
\begin{align*}
  \sup_{p\in[p_0,1]}
  \Pp(\mathcal E_{n,p}^{\,c})
  &\leq
  \sup_{p\in[p_0,1]}
  \Pp(\mathcal F_{n,p}^{\,c})                                \\
  &\leq
  2n^{-5}
  +2n^{-4}
  +2n\exp\left(-\frac{p_0^3}{192}n\log n\right)
  +2n^{-13/3}.
\end{align*}
The right-hand side tends to zero as $n\to\infty$.  Hence
\[
\lim_{n\to \infty}\inf_{p\in[p_0,1]}\Pp(\mathcal E_{n,p})=1,
\]
as required.
\end{proof}

\end{document}